\def\Z{\mathbb Z}
\def\C{\mathbb C}
\def\Ca{\mathcal C}
\def\Ja{\mathcal J}
\def \F{\mathbb F}
\def\Fq{\mathbb{F}_q}
\def\Fp{\mathbb{F}_p}
\def\Fqn{\mathbb{F}_{q^n}}
\def\Fqk{\mathbb{F}_{q^k}}
\def\Fpn{\mathbb{F}_{p^n}}
\def\Fpk{\mathbb{F}_{p^k}}
\DeclareMathOperator{\tr}{Tr}
\newcommand{\um}{\mathbbm{1}}
\theoremstyle{plain}
\newtheorem{theorem}{Theorem}[section]
\newtheorem{lemma}[theorem]{Lemma}
\newtheorem{definition}[theorem]{Definition}
\newtheorem{corollary}[theorem]{Corollary}
\newtheorem{conjecture}[theorem]{Conjecture}
\newtheorem{remark}[theorem]{Remark}
\newtheorem{example}[theorem]{Example}
\newtheorem{question}[theorem]{Question}
\def\qed{\hfill\hbox{$\square$}}
\theoremstyle{definition}
\author[J. Alves Oliveira]{Jos\'e Alves Oliveira}
\address{
	Departamento de Matem\'{a}tica\\
	Universidade Federal de Minas Gerais\\
	UFMG\\
	Belo Horizonte, MG\\
	31270-901\\
	Brazil\\
}
\email{jose-alvesoliveira@hotmail.com}
\title{Rational points on Cubic, Quartic and Sextic Curves over Finite Fields}
\keywords{Algebraic curves, Hasse-Weil's bound, Caracter sums, Fermat curves, Finite fields}
\date{\today
}
\subjclass[2000]{ }
\subjclass[2010]{12E20 (primary) and 14H52(secondary)} 
\begin{document}
\baselineskip=1.6\baselineskip
	%\baselineskip=1.5\baselineskip
	\begin{abstract}
	Let $\Fq$ denote the finite field with $q$ elements. In this work, we use characters to give the number of rational points on suitable curves of low degree over $\Fq$ in terms of the number of rational points on elliptic curves. In the case where $q$ is a prime number, we give a way to calculate these numbers. As a consequence of these results, we characterize maximal and minimal curves given by equations of the forms $ax^3+by^3+cz^3=0$ and $ax^4+by^4+cz^4=0$.
	\end{abstract}
	
	\maketitle
	
	\section{Introduction} \label{sec1}
	Let $\Fq$ be a finite field with $q=p^k$ elements. For a curve $\Ca$ over $\Fq$, we denote by $N_n(\Ca)$ the number of rational points of $\Ca$ over $\Fqn$. For an irreducible non-singular curve $\Ca$ over $\Fq$, the well-known Riemann Hypothesis \cite[Theorem $3.3$]{moreno1993algebraic} states that the number of rational points on a curve $\Ca$ over $\Fqn$ satisfies
	$$N_n(\mathcal{C})=q^n+1-\sum\limits_{i=1}^{2g} \omega_i^n,$$
	where $g$ denotes the genus of $\Ca$ and $|\omega_i|=\sqrt{q}$ for all $i$. 
	Also by this result, we have the well-known Hasse-Weil bound for the number of rational points on an irreducible non-singular curve over $\Fq$ of genus $g$, given by
	\begin{equation}\label{item42}
	|N_n(\mathcal{C})-q^n-1|\leq 2g\sqrt{q^n}.
	\end{equation}
	In general, calculating the exact value $N_n(\Ca)$ is a difficult task. Many authors have studied curves whose number of rational points attains the upper Hasse-Weil bound, called maximal curves. The number of points on some special curves was studied in \cite{cossidente2000plane,garcia2002curves,hirschfeld1998number, hirschfeld2008algebraic, hu2015number, leep1994number, rojas2013number}. From Riemann Hypothesis, it is possible to deduce another bounds for irreducible singular plane curves. In fact, Aubry and Perret \cite{aubry1996weil} generalized this result to irreducible curves.
	
	\begin{theorem}\cite[Corollary $2.4$]{aubry1996weil}\label{item41}
		The number of points on an irreducible curve $\Ca$ over $\Fq$ is given by
		$$N_n(\mathcal{C})=q^n+1-\sum\limits_{i=1}^{2g} \omega_i^n-\sum\limits_{i=1}^{\Delta_X} \beta_i^n,$$
		where $\Delta_X$ is a constant depending on $\Ca$ and $\omega_i,\beta_i$ are complex numbers. Furthermore, $|\omega_i|=\sqrt{q}$, for all $1\leq i\leq 2g$, and $|\beta_i|=1$, for all $1\leq i\leq \Delta_X$. In addition, $\Delta_X\leq \pi-g$, where $\pi$ is the arithmetic genus of $\Ca$.
	\end{theorem}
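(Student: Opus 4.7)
The plan is to reduce to Weil's theorem for non-singular curves by passing to the normalization $\nu \colon \tilde{\Ca} \to \Ca$. Since $\Ca$ is irreducible, $\tilde{\Ca}$ is an irreducible non-singular curve of geometric genus $g$, and $\nu$ is a bijection away from the finite singular locus of $\Ca$. Applying the Riemann Hypothesis to $\tilde{\Ca}$ yields
$$N_n(\tilde{\Ca}) = q^n + 1 - \sum_{i=1}^{2g}\omega_i^n,\qquad |\omega_i| = \sqrt{q},$$
so it remains to analyze the defect $N_n(\tilde{\Ca}) - N_n(\Ca)$ concentrated at the singularities.

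The cleanest bookkeeping is at the level of zeta functions. For each closed singular point $P$ of $\Ca$, the inclusion of local rings of $\Ca$ and $\tilde{\Ca}$ at $P$ is a finite extension whose cokernel is a finite-length module carrying a natural Frobenius action; combining this with the permutation of the branches of $\tilde{\Ca}$ over $P$ produces a local polynomial $L_P(T) \in \Z[T]$ satisfying
$$Z(\tilde{\Ca}, T) = Z(\Ca, T) \cdot \prod_{P} L_P(T),$$
the product ranging over closed singular points. Taking logarithmic derivatives and comparing coefficients of $T^n$ recovers the claimed formula, the $\beta_i$ being the reciprocal roots of $\prod_P L_P(T)$.

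Two facts then have to be verified. First, each $L_P(T)$ arises from a finite-order operator (Frobenius acting on a finite object), so its reciprocal roots are roots of unity, giving $|\beta_i| = 1$. Second, to bound $\Delta_X \leq \pi - g$, I would invoke the classical genus relation $\pi - g = \sum_P \delta_P$, where $\delta_P$ is the delta invariant at the singularity $P$, and establish the local inequality $\deg L_P(T) \leq \delta_P$. The hard part of the argument is precisely this last local step: one must identify $L_P(T)$ explicitly and compare its degree with $\delta_P$, which in turn rests on a careful analysis of the cokernel of the inclusion of local rings together with the combinatorial data (number of branches and residue field degrees) encoded by the Frobenius action at $P$.
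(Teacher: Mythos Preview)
The paper does not prove this statement at all: it is quoted verbatim as \cite[Corollary~2.4]{aubry1996weil} and used as a black box throughout, so there is no in-paper proof to compare against.

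That said, your outline is exactly the strategy of the Aubry--Perret paper you are citing. They too pass to the normalization $\tilde{\Ca}\to\Ca$, invoke Weil on $\tilde{\Ca}$, and then compare zeta functions by a local analysis at the singular points, producing correction factors whose reciprocal roots are roots of unity. Your identification of the ``hard part'' is accurate: the content of their argument is precisely the local step showing that the degree of the singular contribution is bounded by the sum of the $\delta_P$-invariants, which in turn equals $\pi-g$. So your sketch is correct in spirit, though as written it stops just short of carrying out that local computation; if you were actually asked to supply a proof here you would need to make explicit the structure of the Frobenius action on $\widetilde{\mathcal{O}}_P/\mathcal{O}_P$ (or equivalently on the branches over $P$) and verify $\deg L_P(T)\le \delta_P$ directly.
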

	For $q$ an odd prime power, elliptic curves over $\Fq$ are curves given by equations of the form
	$$y^2=ax^3+bx^2+cx+d,$$
	with $18abcd-4b^3d+b^2c^2-4ac^3-27a^2d^2\neq 0$, where $a\neq 0,b,c,d$ are elements of $\Fq$. Results involving points on elliptic curves can be found in \cite{lercier1995counting,schoof1995counting,Silverman}. Since the elliptic curve $\Ca:y^2=ax^3+bx^2+cx+d$ has genus $1$, Theorem \ref{item41} states that there is a complex number $\omega_q(a,b,c,d)$ satisfying
	$$N_n(\Ca)=q^n+1-\omega_q(a,b,c,d)^n-\overline{\omega_q(a,b,c,d)}^n,$$
	where $|\omega_q(a,b,c,d)|=\sqrt{q}$.
	   In Section \ref{sec3}, we show a way to calculate $\omega_q(a,b,c,d)$ computationally faster than direct computation. 
	
	In this work, we use sums of characters to give the number of rational points on suitable curves of degree $3,4$ and $6$ over $\Fq$  in terms of the complex numbers $\omega_q(a,b,c,d)$. Although the clear connection between sums of characters and number of rational points on curves, there exists few articles in literature exploring this relation.  The connection between sums of characters and elliptic curves have already been studied by Williams \cite{williams1979evaluation}.  We use techniques similar to those used by Williams in addition to character properties, associating different forms to count the number of points on the same curve in order to get the exact number of points. In this work, we are interested in affine curves with equation $y^i=f(x)$, where $i=2,3$ or $4$ and $f(x)$ has suitable form. For $n$ a positive integer, let $i$ be a divisor of $q^n-1$ and $\chi_i$ denotes a multiplicative character of order $i$ on $\Fqn^*$. To reduce the notation, we leave implicit the dependence on $n$. It is convenient to extend the domain of the definition of $\chi_i$ from $\Fqn^*$ to $\Fqn$ by setting $\chi_i(0)=1$ if $i=1$ and $\chi_i(0)=0$ if $i\geq 2$. Some of the main results of this paper are the following.
	
	\begin{theorem}\label{item23}
	Let $a,b\in\Fq^*$. The number of rational points on the curve $\Ca: y^3=ax^6+b$ over $\Fqn$ satisfies
	$$N_n(\Ca)=q^n+1-\omega_1^n-\overline{\omega_1}^n-\omega_2^n-\overline{\omega_2}^n-\omega_3^n-\overline{\omega_3}^n-\omega_4^n-\overline{\omega_4}^n-\chi_3(a)-\chi_3^2(a),$$
	where $\omega_1:=\omega_q(a^{-1},0,0,-ba^{-1}),\ \omega_2:=\omega_q(b^{-1},0,0,-ab^{-1}),\ \omega_3:=\omega_q(1,0,0,-4ab)$ and $\omega_4:=\omega_q(-4ab,0,0,1)$.
	\end{theorem}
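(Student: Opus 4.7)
The plan is to count the $\Fqn$-points of $\Ca$ via cubic characters and reduce the resulting character sum to quadratic character sums on the four elliptic curves $E_1,\ldots,E_4$ (plus a boundary correction coming from the singular point at infinity of $\Ca$). The projective closure of $\Ca$ in $\mathbb{P}^2$ has the unique point at infinity $[0{:}1{:}0]$, which is singular with exactly $1+\chi_3(a)+\chi_3^2(a)$ branches defined over $\Fqn$. With the convention $\chi_3(0)=0$ and the identity $\#\{y\in\Fqn:y^3=c\}=1+\chi_3(c)+\chi_3^2(c)$, the affine count is $q^n+S+\overline S$ with $S:=\sum_{x\in\Fqn}\chi_3(ax^6+b)$, so $N_n(\Ca)=1+q^n+S+\overline S$ and the statement reduces to
\[S+\overline S=-\sum_{i=1}^{4}(\omega_i^n+\overline{\omega_i}^n)-\chi_3(a)-\chi_3^2(a),\]
where the last correction term reflects, via the Aubry--Perret framework, the $\chi_3(a)+\chi_3^2(a)$ extra places of the normalization $\tilde\Ca$ lying over the singular point. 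The degenerate case $3\nmid q^n-1$ (where no nontrivial cubic character exists, cubing is a bijection of $\Fqn$, and each $E_i$ is supersingular so that $\omega_i^n+\overline{\omega_i}^n=0$) is handled separately and is immediate under the standard convention $\chi_3\equiv 0$.

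Using $x^6=(x^2)^3$ together with the orthogonality identity $\sum_x g(x^2)=\sum_t g(t)+\sum_t\chi_2(t)g(t)$ (valid for $q^n$ odd), I split $S=A+B$ with $A:=\sum_t\chi_3(at^3+b)$ and $B:=\sum_t\chi_2(t)\chi_3(at^3+b)$. The quantity $q^n+A+\overline A$ is the number of affine $\Fqn$-points of the Fermat-type cubic $E':z^3=at^3+b$, a smooth genus-$1$ curve whose projective closure in $\mathbb{P}^2$ acquires $1+\chi_3(a)+\chi_3^2(a)$ points at infinity (the cube roots of $a$). The classical Fermat-to-Weierstrass substitution sends $E'$ to $Y^2=X^3-432(ab)^2$, and a sextic rescaling (which is $\Fqn$-rational because $-27=(-3)^3$ is always a cube, and $-3$ is a square in $\Fqn$ whenever $3\mid q^n-1$) identifies this Weierstrass model with $E_4:y^2=-4abx^3+1$. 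Hasse--Weil applied to $E'$ then yields
\[A+\overline A=-(\omega_4^n+\overline{\omega_4}^n)-\chi_3(a)-\chi_3^2(a).\]

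The remaining and most delicate step is to prove $B+\overline B=-\sum_{i=1}^{3}(\omega_i^n+\overline{\omega_i}^n)$, and this will be the main obstacle: one must extract three distinct elliptic contributions from a single quadratic-twisted cubic character sum. My plan is first to rewrite $B+\overline B=\sum_{(t,y)\in E'(\Fqn)}\chi_2(t)$ using the counting identity for $y^3=c$ combined with $\sum_t\chi_2(t)=0$, then to exploit the $(a,b)$-symmetry $B(a,b)=B(b,a)$ coming from the involution $t\mapsto 1/t$ on $E'$ (which produces the pair $E_1, E_2$), together with a Jacobi-sum manipulation in the spirit of Williams~\cite{williams1979evaluation} that separates off the $ab$-symmetric contribution of $E_3$. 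This decomposes $B+\overline B$ into three quadratic character sums on the Weierstrass cubics $y^2=a^{-1}x^3-ba^{-1}$, $y^2=b^{-1}x^3-ab^{-1}$, and $y^2=x^3-4ab$, each of which equals $-(\omega_i^n+\overline{\omega_i}^n)$ by Hasse--Weil. Throughout, the boundary contributions from $t=0$, from the cube roots of $a$ and of $b$, and from the singular point at infinity of $\Ca$ must be tracked with care so that they cancel exactly and no stray constants appear beyond the advertised $-\chi_3(a)-\chi_3^2(a)$.
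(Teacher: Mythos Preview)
Your decomposition $S=A+B$ via the quadratic splitting $\sum_x g(x^2)=\sum_t(1+\chi_2(t))g(t)$ is a legitimate starting point, and your treatment of $A+\overline A$ (through the Fermat cubic $z^3=at^3+b$ and its Weierstrass model) correctly produces the $\omega_4$-contribution together with the boundary term $-\chi_3(a)-\chi_3^2(a)$. The degenerate case $3\nmid q^n-1$ is also fine.

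The gap is in the $B$-step, and it is a real one. The symmetry $B(a,b)=B(b,a)$ coming from $t\mapsto 1/t$ is merely \emph{consistent} with the target (swapping $a,b$ swaps $E_1,E_2$ and fixes $E_3$); it does not by itself produce a three-term decomposition. Your appeal to ``a Jacobi-sum manipulation in the spirit of Williams'' is not an argument: Williams treats pure $\chi_2$-sums of products of two quadratics, whereas $B=\sum_t\chi_2(t)\chi_3(at^3+b)$ is a mixed $\chi_2$--$\chi_3$ sum over a cubic argument, and no Jacobi-sum identity visibly extracts three separate Frobenius traces from it. Worse, the geometric interpretation you write down, $B+\overline B=\sum_{(t,y)\in E'}\chi_2(t)$, is exactly the affine point count of the double cover $s^2=t$ of $E'$ minus that of $E'$ itself --- and that double cover \emph{is} $\Ca$ (with $s$ playing the role of $x$), so this route is circular. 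You therefore need an independent evaluation of $B+\overline B$, and none is supplied.

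The paper sidesteps this by using the \emph{other} factorisation $x^6=(x^3)^2$: it writes
\[N_n(\Ca)=1+\sum_{u\in\Fqn}\big[1+\chi_3(au^2+b)+\chi_3^2(au^2+b)\big]\big[1+\chi_3(u)+\chi_3^2(u)\big]\]
and expands the nine cross terms into three blocks $S_1,S_2,S_3$. Each block is then \emph{directly} an elliptic or hyperelliptic character sum: $S_1$ counts affine points on $w^3=au^2+b$, i.e.\ on $z^2=a^{-1}w^3-ba^{-1}$, giving $\omega_1$; the substitution $u\mapsto u^{-1}$ turns $S_2$ into the count on $w^3=bu^2+a$, giving $\omega_2$ together with the correction $-\chi_3(a)-\chi_3^2(a)$; and $S_3=\sum_{u\ne0}\big[\chi_3((au^2+b)/u)+\chi_3^2((au^2+b)/u)\big]$ is evaluated via Remark~\ref{item37} and Theorem~\ref{item17} on the hyperelliptic curve $z^2=y^6-4ab$, which splits into the pair $\omega_3,\omega_4$. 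The point is that with cubic characters on the auxiliary variable $u=x^3$, the three-way split you are missing in $B$ appears automatically at the level of the cross terms $\chi_3^i(au^2+b)\,\chi_3^j(u)$; no extra identity is needed.
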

	
	Despite the following Theorem does not express the number of rational points on the curves in terms of $\omega_q(a,b,c,d)$, this general result will be useful to calculate the number of points on special curves of low degree.

	\begin{theorem}\label{item36}
		Let $A,B,C,a,b,c$ be elements in $\Fq$ and $\alpha_1,\alpha_2\in\F_{q^2}$ be the roots of the polynomial $f(x)=ax^2+bx+c$. Let $\Ca_1: y^i=(Ax^2+Bx+C)(ax^2+bx+c)^{i-1}$ and $\Ca_2: y^i(ax^2+bx+c)=Ax^2+Bx+C$ be two curves over $\overline{\Fq}$. The number of rational points on the curves $\Ca_1$ and $\Ca_2$ over $\Fqn$ satisfies
		%	$$N_n(\Ca_1)-\gamma+\sum\limits_{j=1}^{i-1}\chi_i^j\left(\tfrac{A}{a}\right)=N_n(\Ca)-\delta=N_n(\Ca_2)-1+\sum\limits_{j=1}^{i-1}\chi_i^j\left(\tfrac{A}{a}\right),$$
		$$N_n(\Ca_1)=N_n(\Ca)-\sum\limits_{j=1}^{i-1}\chi_i^j\left(\tfrac{A}{a}\right)-\delta+\gamma$$
		and
		$$N_n(\Ca_2)=N_n(\Ca)+1-\sum\limits_{j=1}^{i-1}\chi_i^j\left(\tfrac{A}{a}\right)-\delta,$$
		where $\Ca$ is the curve over $\overline{\Fq}$ given by the equation $z^2=(B-by^i)^2-4(A-ay^i)(C-cy^i)$, $\gamma:=\um_{\{\alpha_1\in\Fqn\}}+\um_{\{\alpha_1\in\Fqn,\alpha_1\neq\alpha_2\}}$ and $\delta$ is given by
		$$\delta:=\begin{cases}
		1+\chi_2(b^2-4ac),&\text{ if }i=1;\\
		1+\chi_2(4Ac+4Ca-2Bb),&\text{ if }i=2\text{ and }b^2-4ac=0;\\
		1,&\text{ otherwise}.\\
		\end{cases}$$
	\end{theorem}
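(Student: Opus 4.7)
I regard the equation defining $\Ca_2$, namely $(A - a y^i) x^2 + (B - b y^i) x + (C - c y^i) = 0$, as a quadratic in $x$ parametrised by $y$. For each $y \in \Fqn$ with $A - a y^i \neq 0$ the number of solutions $x \in \Fqn$ is $1 + \chi_2(D(y))$, where $D(y) := (B - b y^i)^2 - 4(A - a y^i)(C - c y^i)$ is precisely the discriminant and $\chi_2$ is the quadratic character (with the convention $\chi_2(0) = 0$). Since the curve $\Ca : z^2 = D(y)$ has $1 + \chi_2(D(y))$ affine solutions for every $y \in \Fqn$, the affine counts of $\Ca$ and $\Ca_2$ differ only on the exceptional locus where $A - a y^i = 0$, and this single observation supplies the bulk of the formula for $N_n(\Ca_2)$.

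First I would count the exceptional $y \in \Fqn$, whose number is $\sum_{j=0}^{i-1} \chi_i^j(A/a) = 1 + \sum_{j=1}^{i-1} \chi_i^j(A/a)$ under the stated conventions on $\chi_i(0)$. At each such $y$ the equation degenerates to a linear (or trivial) equation, which I would solve directly. Since $D(y) = (B - b y^i)^2$ is a square on this locus, comparing with the already-summed value $1 + \chi_2(D(y))$ shows that one spurious unit has been added per exceptional $y$, producing the term $-\sum_{j=1}^{i-1}\chi_i^j(A/a) - 1$. A separate tally of the points at infinity of the projective closures of $\Ca_2$ and $\Ca$ (the behaviour depending on whether $a$, $A$, and $b^2 - 4 a c$ vanish) introduces the correction $\delta$ and the additive $+1$ in the formula.

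For $\Ca_1$, I would exploit the birational map $\varphi : \Ca_1 \to \Ca_2$ sending $(x, y) \mapsto (x,\, y/(a x^2 + b x + c))$. A direct substitution, starting from $y^i = (Ax^2+Bx+C)(ax^2+bx+c)^{i-1}$, shows that on the open locus where $a x^2 + b x + c \neq 0$ this is a bijection onto the analogous locus of $\Ca_2$, with inverse $(x, y') \mapsto (x, y'(a x^2 + b x + c))$. The complement is the fibre over the roots of $a x^2 + b x + c$: each root $\alpha \in \Fqn$ forces $y = 0$ in $\Ca_1$, contributing exactly one affine point per distinct root for a total of $\gamma$; the corresponding points on $\Ca_2$ are generically absent, since they would require $A \alpha^2 + B \alpha + C = 0$. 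Combined with a single infinity adjustment this gives $N_n(\Ca_1) = N_n(\Ca_2) + \gamma - 1$, which together with the formula for $N_n(\Ca_2)$ produces the stated expression.

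\textbf{Main obstacle.} The principal difficulty is the case analysis that collapses the various infinity and degeneracy corrections into the three-case formula for $\delta$. In the generic regime $\delta = 1$ is a single infinity correction; when $i = 1$ there is no $x^2$-term in the defining equation of $\Ca_2$ and $\Ca$ itself becomes a conic whose behaviour at infinity contributes an extra $\chi_2(b^2 - 4 a c)$; when $i = 2$ together with $b^2 - 4 a c = 0$, the polynomial $D(y)$ drops in degree from $4$ to $2$, the projective closure of $\Ca$ acquires a spurious line-at-infinity component, and an additional character value $\chi_2(4 A c + 4 C a - 2 B b)$ emerges. Verifying that these corrections combine into exactly the stated $\delta$, with no leftover, is the heart of the proof.
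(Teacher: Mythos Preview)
Your proposal is essentially the same argument as the paper's. The paper packages the quadratic-in-$x$ count as a separate lemma (Lemma~\ref{item34}): it observes that the character sum $\sum_{x\neq\alpha_1,\alpha_2}\bigl[1+\chi_i(\tfrac{Ax^2+Bx+C}{ax^2+bx+c})+\cdots\bigr]$ counts the affine points of $\Ca_2$, and then reinterprets it by fixing $y$ and solving the resulting quadratic in $x$, which is exactly your discriminant trick linking to $\Ca$. For $\Ca_1$ the paper uses the character identity $\chi_i^j\bigl((Ax^2+Bx+C)(ax^2+bx+c)^{i-1}\bigr)=\chi_i^j\bigl(\tfrac{Ax^2+Bx+C}{ax^2+bx+c}\bigr)$ for $x\neq\alpha_1,\alpha_2$, which is the arithmetic shadow of your birational map $(x,y)\mapsto(x,y/(ax^2+bx+c))$; the two formulations are equivalent. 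Your identification of the $\delta$ case analysis as the main bookkeeping burden is accurate, and the paper handles it in the same way, treating $\delta$ as the infinity correction for $\Ca$.
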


	\begin{theorem}\label{item19}
		For $a,b,c\in\Fq$, with $a\neq 0$, let $\Ca: y^4=ax^4+bx^2+c$ be a curve over $\Fq$, with $q\equiv 1\pmod{4}$. The number of rational points on $\Ca$ over $\Fqn$ satisfies
		$$N_n(\Ca)\!=\!\begin{cases}
		q^n+1-\omega_1^n-\overline{\omega_1}^{\, n}-\omega_2^n-\overline{\omega_2}^{\, n}-\omega_3^n-\overline{\omega_3}^{\, n},&\text{if }b^2-4ac\neq 0\text{ and }c\neq0;\\
		q^n+1-\omega_1^n-\overline{\omega_1}^{\, n}-\chi_2(b),&\text{if }b^2-4ac\neq 0\text{ and }c=0;\\
		q^n+1-\chi_2(-b/2)+q^n\cdot\chi_2(a),&\text{if }b^2-4ac=0\text{ and }c\neq0;\\
		q^n+1-\chi_2(b)+q^n\cdot\chi_2(a),&\text{if }b^2-4ac=0\text{ and }c=0,\\
		\end{cases}$$
		where $\omega_1:=\omega_q(a^{-1},0,d_1,0)$, $d_1:=\tfrac{b^2-4ac}{4a^2}$, $\omega_2:=\omega_q(c^{-1},0,d_2,0)$, $d_2:=\tfrac{b^2-4ac}{4c^2}$ and $ \omega_3:=\omega_q(a,b,c,0)$.
	\end{theorem}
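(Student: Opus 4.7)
The plan is to count $N_n(\Ca)$ directly via character sums of order $4$. Since $q \equiv 1 \pmod{4}$, the identity $\#\{y \in \Fqn : y^4 = t\} = \sum_{j=0}^{3} \chi_4^j(t)$ holds (with $\chi_4^0(0) = 1$). Checking the partials of the projective closure $Y^4 = aX^4 + bX^2Z^2 + cZ^4$ shows it is smooth when $b^2-4ac\neq 0$ and $c\neq 0$, and there are $\sum_{j=0}^3 \chi_4^j(a)$ points at infinity. Therefore
\[
N_n(\Ca) = q^n + 1 + \chi_4(a) + \chi_2(a) + \chi_4^3(a) + S_1 + S_2 + S_3,\qquad S_j := \sum_{x \in \Fqn}\chi_4^j(ax^4+bx^2+c).
\]

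In the generic case $D := b^2-4ac\neq 0$, $c\neq 0$, for $S_2$ I would substitute $u = x^2$ and pair $\pm x$ (using $\chi_2(-1)=1$): this splits $S_2$ into the standard quadratic sum $\sum_u\chi_2(au^2+bu+c) = -\chi_2(a)$ and the elliptic sum $\sum_u\chi_2(au^3+bu^2+cu) = -\omega_3^n-\overline{\omega_3}^n$ for $E_3\colon y^2 = ax^3+bx^2+cx$. For $S_1+S_3$, the same substitution combined with the identity $\chi_4(u^2)=\chi_2(u)$ and then $v=1/u$ on the twisted piece yields
\[
S_1+S_3 = V(a,b,c) + V(c,b,a) - \chi_4(a) - \chi_4^3(a),\qquad V(\alpha,\beta,\gamma) := \sum_u(\chi_4+\chi_4^3)(\alpha u^2+\beta u+\gamma).
\]
Completing the square $u\mapsto v-\beta/(2\alpha)$, then using $\chi_4(\alpha)\chi_4^3(\alpha) = 1$ together with the further substitution $w=\alpha v$, reduces $V(\alpha,\beta,\gamma)$ to $\chi_2(\alpha)\bigl(H(\kappa)+\overline{H(\kappa)}\bigr)$ with $\kappa := D/(4\alpha)$ and $H(\kappa) := \sum_w \chi_4(w)\chi_2(w+\kappa)$. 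The crucial matching step is to expand the $E_1$-sum
\[
-\omega_1^n - \overline{\omega_1}^n = \sum_x\chi_2\bigl(x(a^{-1}x^2+d_1)\bigr) = \chi_2(a)\sum_x\chi_2(x)\chi_2(x^2+\kappa),
\]
pair $\pm x$, substitute $w=x^2$, and use $\chi_2(x) = \chi_4(x^2) = \chi_4(w)$ on the squares to rewrite this as $\chi_2(a)(H(\kappa)+\overline{H(\kappa)})$; this gives $V(a,b,c) = -\omega_1^n-\overline{\omega_1}^n$, and the symmetric argument in $c$ yields $V(c,b,a) = -\omega_2^n-\overline{\omega_2}^n$. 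Assembling everything, the boundary terms $\chi_4(a)+\chi_4^3(a)$ and $\chi_2(a)$ cancel exactly against the points-at-infinity contribution, leaving the stated formula.

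The three degenerate branches are variants of the same calculation. When $c=0$, $f(x)=x^2(ax^2+b)$ and the curves $E_2$, $E_3$ degenerate; direct computation of the $S_j$'s produces only the $\omega_1$-contribution together with the boundary correction $-\chi_2(b)$, once one invokes the identity $\chi_4(-1)=\chi_2(2)$ (valid in $\Fqn$ precisely because $q \equiv 1 \pmod 4$ forces $q^n \bmod 8 \in \{1,5\}$) to reconcile the two flavors of Jacobi-type sums that arise. When $D=0$, $f(x) = a(x^2+b/(2a))^2$ is a perfect square, so $y^4 = f(x)$ splits (for $a$ a square) into the two conic branches $y^2 = \pm\sqrt{a}(x^2+b/(2a))$, yielding the $q^n\chi_2(a)$ term, while a direct count of the roots of $x^2+b/(2a)=0$ supplies the $-\chi_2(-b/2)$ correction (or $-\chi_2(b)$ when $c=0$). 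The main obstacle throughout is the generic-case matching described above: tracking the cascade of substitutions $u=x^2$, $v=1/u$, $w=\alpha v$ through the three characters $\chi_4,\chi_4^3,\chi_2$, and verifying that all boundary corrections combine to cancel the points-at-infinity contribution on the nose; the hypothesis $q\equiv 1\pmod 4$ is essential, both to guarantee existence of $\chi_4$ and to give $\chi_2(-1)=1$, which is what allows the $\pm x$ pairings on the two sides of the matching step to synchronize.
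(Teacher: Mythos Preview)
Your proposal is correct and follows the same overall architecture as the paper: write $N_n(\Ca)$ as a $\chi_4$-character sum, pass from $x$ to $u=x^2$ via the weight $1+\chi_2(u)$, use the inversion $u\mapsto 1/u$ to produce the $(a,b,c)\leftrightarrow(c,b,a)$ symmetry, and identify the three pieces with the elliptic curves $E_1,E_2,E_3$. The bookkeeping of boundary terms and points at infinity is also handled the same way.

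The one substantive difference is in how the key identity
\[
V(a,b,c):=\sum_{u}\bigl(\chi_4+\chi_4^3\bigr)(au^2+bu+c)=-\omega_1^n-\overline{\omega_1}^{\,n}
\]
is established. The paper isolates this as Lemma~\ref{item20} and proves it by a \emph{double count}: the sum $1+\sum_u\sum_{j=0}^3\chi_4^j(au^2+bu+c)$ is recognized (after completing the square) as the number of points on the auxiliary curve $z^2=a^{-1}y^4+d_1$, and then Theorem~\ref{item30} (already proved for curves $y^2=\text{quartic}$) supplies the value. Your route instead introduces the Jacobi-type sum $H(\kappa)=\sum_w\chi_4(w)\chi_2(w+\kappa)$ and matches both $V(a,b,c)$ and $-\omega_1^n-\overline{\omega_1}^{\,n}$ to $\chi_2(a)\bigl(H(\kappa)+\overline{H(\kappa)}\bigr)$ by direct character manipulation. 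Your approach is more self-contained (it does not invoke Theorem~\ref{item30}), at the cost of more explicit substitution-tracking; the paper's approach is shorter but depends on that earlier result.

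Two small corrections. First, the substitution you describe as ``$w=\alpha v$'' does not literally reduce $V$ to $H$; what works is to set $t=\alpha v^2-\kappa$ and count preimages, giving
\[
V(\alpha,\beta,\gamma)=\sum_t\bigl[1+\chi_2\bigl((t+\kappa)/\alpha\bigr)\bigr](\chi_4+\chi_4^3)(t)
=\chi_2(\alpha)\bigl(H(\kappa)+\overline{H(\kappa)}\bigr),
\]
since $\sum_t(\chi_4+\chi_4^3)(t)=0$ and $\chi_2(t)\chi_4(t)=\chi_4^3(t)$. Second, the identity $\chi_4(-1)=\chi_2(2)$ is true but not actually needed for the $c=0$ branch: following the paper's organization, the $c=0$ case falls out of Lemma~\ref{item20} (which still applies since $b^2-4ac=b^2\neq 0$) together with the elementary evaluation of $\sum_{z\neq 0}(\chi_4+\chi_4^3)(a+bz)=-\chi_4(a)-\chi_4^3(a)$.
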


	In order to prove these main results, we calculate the number of points on many others curves. For all the curves, we give explicitly the complex numbers from Theorem \ref{item41}, as we can see in Theorems \ref{item23} and \ref{item19}.
	
	As a consequence of the results presented throughout the paper, in Section \ref{sec7} we discuss about the maximality and minimality of curves with equations of the form $$ax^n+by^n+cz^n=0$$
	 in the cases $n=3$ and $n=4$, where $a,b,c$ are elements in a prime field $\Fp$, generalizing the conditions presented by Garcia and Tafazolian \cite{garcia2008cartier} in these cases. The case where $a=b=c=1$, the well-known Fermat curve, was discussed in \cite{garcia2008cartier}. More generally, the techniques presented here give a new way to state when an irreducible curve attains the upper bound given in Theorem \ref{item41}.
	 
	The reason why we can not generalize this results to curves given by equations of the form $y^i=f(x)$ of any degree is due to the fact that we do not know how to relate any sum of characters with a elliptic curves. For low degree, there is a natural way to find this relation, as we use in the results in this article. 
	\section{ Preliminaries}\label{sec2}
	
	 In this section, we recall some general results involving rational points on curves over finite fields. Throughout the paper, we use the notation bellow.
	
	\textbf{Notation}
	\begin{itemize}
		\item $\Fq$ is a finite fields with $q=p^k$ elements.
		\item $\overline{\Fq}$ is the algebraic closure of $\Fq$.
		\item For a curve $\mathcal{C}$ over $\Fq$ and $n\geq1$, the set of $\Fqn$-rational points is denoted by $\mathcal{C}(\Fqn)$. 
		\item $N_n(\mathcal{C})$ is the number of rational points of a curve $\mathcal{C}$ over $\Fqn$.
		\item The genus of a curve $\mathcal{C}$ over $\Fq$ is denoted by $g$.
		\item For $D$ a divisor of a curve $\mathcal{C}$, $N(D)=q^{\deg(D)}$ is the \textit{norm} of $D$.
		\item $\overline{\omega}$ denotes the complex conjugate of a complex number $\omega$. 
		\item The indicator function of an event $A$ is denoted by $\um_A$. 
		\item For $n$ a positive integer, let $i$ be a divisor of $q^n-1$ and $\chi_i$ denotes a multiplicative character of order $i$ on $\Fqn^*$
		
	\end{itemize}
		\begin{comment}
		The theory of algebraic curves over finite fields can be found in Moreno \cite{moreno1993algebraic}. Here, we present the main results involving the well-known zeta function for algebraic curves.
		
	\begin{definition}\label{item5} For a complex variable $s$, the zeta function of a curve $\mathcal{C}$ over $\Fq$ is
		$$\zeta(\Ca,s)=\sum_D N(D)^{-s},$$
		where $D$ runs over all effective $\Fq$-rational divisors of $\Ca$.
	\end{definition}
	
	\begin{theorem}\cite[Theorem $3.2$]{moreno1993algebraic}\label{item6} The zeta function of $\Ca$ can be written as
		$$\zeta(\Ca,s)=\frac{L(q^{-s})}{(1-q^{-s})(1-q^{1-s})},$$
	where 
	$$L(q^{-s})=\sum\limits_{j=0}^{2g} a_j q^{-js}\text{, with }a_0=1\text{ and }a_{2g}=q^g.$$
		
	\end{theorem}
	The polynomial $L(x)\in\Z[x]$ in Theorem \ref{item6} is called $L$-polynomial of the curve $\Ca$.
	
	\begin{theorem}\cite[Theorem $3.3$]{moreno1993algebraic}\label{item7}
		(Riemann hypothesis) The reciprocals of the roots $\omega_1,...,\omega_{2g}$
		of the L-polynomial of an irreducible non-singular curve defined over $\Fq$ satisfy
		$$\omega_i=\sqrt{q},\ \ \ \ \ i=1,\ldots,2g.$$
	\end{theorem}
	
	\end{comment}

	\begin{remark}\label{item8}
	The algebraic curves theory is developed in the projective space, then points at the infinity may be on the curve. For example, the point $(x_0,y_0,z_0)=(0,1,0)$ is on the homogenization of the elliptic curve $y^2=x^3+1$ .Therefore, Theorem \ref{item41} is considering those points at the infinity. In the results that we present in this paper, we follow that convention, considering those rational points.
	\end{remark}

	\begin{definition}
		Let $a\neq 0,b,c,d$ be elements in $\Fq$. The curve $\Ca:y^2=ax^3+bx^2+cx+d$ is called elliptic curve over $\Fq$ if the roots of the polynomial $f(x):=ax^3+bx^2+cx+d$ are distinct.
	\end{definition}
	
	As a direct consequence from Riemann Hypothesis, we have the following result.
	
	\begin{theorem}\label{item9}
		Let $\mathcal{C}:x^2=ay^3+by^2+cy+d$ be an elliptic curve over a finite field $F_q$. There exists a complex number $\omega$, with $|\omega|=\sqrt{q}$, that satisfies
		$$N_n(\mathcal{C})=q^n+1-\omega^n-\overline{\omega}^n.$$
		The number $\omega$ is unique unless conjugation. As $\omega$ depends on $a, b,c,d$ and $q$, we denote $\omega$ by $\omega_q(a,b,c,d)$.
	\end{theorem}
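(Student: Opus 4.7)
The plan is to deduce this statement as essentially a direct specialization of the Riemann Hypothesis for curves (equivalently, of Theorem \ref{item41} in the smooth case) to the genus-one situation.

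First, I would verify that the curve $\mathcal{C}: x^2 = ay^3+by^2+cy+d$ (which after swapping coordinates is the usual Weierstrass-type equation $y^2 = ax^3+bx^2+cx+d$) is, once projectivised in $\mathbb{P}^2$, an irreducible non-singular curve of genus $1$. Irreducibility follows because the right-hand side is a cubic with distinct roots, so the polynomial $X^2 - (aY^3+bY^2Z+cYZ^2+dZ^3)$ does not factor non-trivially over $\overline{\Fq}$. Non-singularity in the affine chart is the Jacobian criterion together with the fact that the distinct-roots hypothesis excludes simultaneous vanishing of the partial derivatives; at the single point at infinity $[0:1:0]$ one checks smoothness directly (or passes to the standard chart at infinity). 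Genus $1$ then comes from the degree-genus formula for a smooth plane cubic, or equivalently from the classical fact that an elliptic curve in Weierstrass form has genus $1$.

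With the curve in hand as a smooth, irreducible, genus-$1$ curve over $\Fq$, the Riemann Hypothesis as recalled in the introduction (the quoted Theorem 3.3 of Moreno, or Theorem \ref{item41} with $\Delta_X = 0$ since the curve is already non-singular) gives complex numbers $\omega_1,\omega_2$ with $|\omega_i|=\sqrt{q}$ such that
$$N_n(\mathcal{C}) = q^n + 1 - \omega_1^n - \omega_2^n.$$
Specialising to $n=1$, one sees $\omega_1+\omega_2 = q+1-N_1(\mathcal{C}) \in \Z$, and since $\omega_1,\omega_2$ are the reciprocal roots of the $L$-polynomial $L(T)\in \Z[T]$ of $\mathcal{C}$, they are roots of a real (indeed integer) polynomial. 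I would then split into two cases: if $\omega_1$ is non-real, the pair $\{\omega_1,\omega_2\}$ must be the complex-conjugate pair, giving $\omega_2=\overline{\omega_1}$; if $\omega_1$ is real, then $|\omega_1|=\sqrt{q}$ forces $\omega_1 = \pm\sqrt{q}$, and the product $\omega_1\omega_2=q$ then forces $\omega_2 = \omega_1 = \overline{\omega_1}$ as well. Either way we can write $\omega := \omega_1$ and obtain
$$N_n(\mathcal{C}) = q^n + 1 - \omega^n - \overline{\omega}^{\,n},$$
with uniqueness up to conjugation, proving the claim.

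I do not expect any serious obstacle here, because this statement is really only a repackaging of the Weil conjectures for elliptic curves in a notation convenient for the rest of the paper. The only slightly delicate point is checking that the projective closure is smooth at infinity (so that one can apply the Riemann Hypothesis rather than the Aubry--Perret generalisation Theorem \ref{item41}); this is a routine Jacobian computation. Everything else is formal manipulation with the $L$-polynomial and its integer coefficients.
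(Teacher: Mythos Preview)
Your proposal is correct and matches the paper's approach: the paper simply states this theorem as ``a direct consequence from Riemann Hypothesis'' without further argument, and your write-up supplies exactly the routine details (smoothness and genus of the Weierstrass cubic, then the $g=1$ specialization of the $L$-polynomial together with the observation that its two reciprocal roots are complex conjugates). There is nothing to add.
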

	
	As $\omega_q(a,b,c,d)$ is unique unless conjugation, we let
	\begin{equation}\label{item29}
	\omega_q:\Fq^*\times\Fq^3\longrightarrow \{z\in\C:|z|=\sqrt{q}, \Im(\omega)\geq 0\}
	\end{equation}
	be the function defined by $(a,b,c,d)\mapsto \omega_q(a,b,c,d)$, where $\Im(\omega)$ denotes the imaginary part of $\omega_q$. In Section \ref{sec3}, we present a way to compute $\omega_q(a,b,c,d)$ computationally faster.
	
	\begin{remark}\label{item45}
		By definition of $\omega_q$ and Theorem \ref{item9}, we have $\omega_{q^n}(a,b,c,d)=\omega_q(a,b,c,d)^n$ for all $a,b,c,d\in\Fq$ and for all positive integer $n$.
	\end{remark}
	
	\begin{comment}In this definition of $\omega_q(a,b,c,d)$, we demand $a\neq 0$. In the Remark \ref{item28}, we extend the domain of $\omega$ to $\Fq^4$.\end{comment}
	
	\section{Rational Points On Elliptic Curves}\label{sec3}
	
	From now, we consider $q$ odd.

	%{\color{red} (FALAR DA IMPORTANCIA NA COMPUTAÇÃO)}
	\begin{comment}
	\begin{remark}\label{item26}
		 For any integer $k$ and $a,b\in\Fq$, we have that $\chi_i(a^k)=\chi_i^k(a)$. Moreover, we have $\chi_i(a)=\chi_i(ab^{ki})$ for all $b\in\Fq$.
	\end{remark}
	\end{comment}

	\begin{lemma}\cite[Theorem 5.48]{Lidl}\label{item25} If $a,b,c\in\Fq$, with $a\neq0$ and $\Delta:=b^2-4ac$, then
		$$\sum\limits_{x\in\Fqn} \chi_2(ax^2+bx+c)=\begin{cases}
		-\chi_2(a),&\text{ if }\Delta\neq0;\\
		(q^n-1)\chi_2(a),&\text{ if }\Delta=0.\\
		\end{cases}$$
	\end{lemma}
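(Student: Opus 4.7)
The plan is to reduce the sum to a standard quadratic character sum by completing the square. Since $q$ is odd, $2a\in\Fqn^*$ is invertible, so
$$ax^2+bx+c = a\Bigl[\bigl(x+\tfrac{b}{2a}\bigr)^{2} - \tfrac{\Delta}{4a^{2}}\Bigr].$$
Setting $y = x + b/(2a)$ (a bijection of $\Fqn$) and $D = \Delta/(4a^{2})$, the multiplicativity of $\chi_2$ turns the whole sum into $\chi_2(a)\sum_{y\in\Fqn}\chi_2(y^{2}-D)$, so everything reduces to evaluating that inner sum in terms of whether $D$ vanishes.

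If $\Delta = 0$, then $D=0$ and $\sum_{y}\chi_2(y^{2}) = q^{n}-1$, because $\chi_2(y^{2})=1$ for every $y\in\Fqn^*$ while $\chi_2(0)=0$ under the convention fixed in the introduction. Multiplying by $\chi_2(a)$ gives the second case of the lemma at once.

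If $\Delta\neq 0$, I would parametrize by $z=y^{2}$ using the counting identity $\#\{y\in\Fqn:y^{2}=u\}=1+\chi_2(u)$, valid with the same convention. This yields
$$\sum_{y\in\Fqn}\chi_2(y^{2}-D) \;=\; \sum_{z\in\Fqn}\bigl(1+\chi_2(z)\bigr)\chi_2(z-D) \;=\; \sum_{z\in\Fqn}\chi_2(z-D) \;+\; \sum_{z\neq 0}\chi_2\bigl(z(z-D)\bigr),$$
where I used that the $z=0$ term of the second double-character sum contributes $0$. The first sum vanishes since $\chi_2$ is nontrivial. For the second, the factorisation $\chi_2(z(z-D)) = \chi_2(z^{2}(1-D/z)) = \chi_2(1-D/z)$ combined with the bijection $z\mapsto w = 1-D/z$ from $\Fqn^*$ onto $\Fqn\setminus\{1\}$ converts it into $\sum_{w\neq 1}\chi_2(w) = -\chi_2(1) = -1$. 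Multiplying by $\chi_2(a)$ then gives $-\chi_2(a)$, the first case.

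The argument is quite short and the only genuine obstacle is bookkeeping: keeping the extension $\chi_2(0)=0$ consistent in the counting identity, and isolating the $z=0$ contribution when passing from $y$ to $z$. Once these conventions are tracked carefully, the lemma follows from two standard facts, namely that $\chi_2$ has total sum zero over $\Fqn$ and that the map $z\mapsto 1-D/z$ is a bijection $\Fqn^*\to\Fqn\setminus\{1\}$.
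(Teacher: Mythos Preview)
Your proof is correct. Note, however, that the paper does not supply its own proof of this lemma: it is quoted directly from Lidl--Niederreiter \cite[Theorem 5.48]{Lidl} and used as a black box. Your argument---completing the square to reduce to $\chi_2(a)\sum_y\chi_2(y^2-D)$, then rewriting the $\Delta\neq 0$ case via the counting identity $\#\{y:y^2=z\}=1+\chi_2(z)$ and the bijection $z\mapsto 1-D/z$---is essentially the standard textbook proof, so there is nothing to compare.
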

	
	\begin{comment}
	\begin{remark}\label{item28}
		In the case where $a=0$, it is convenient to extend the domain of the function $\omega$ (defined in Equation \ref{item29}) to $\Fq^4$ by letting
		$$\omega_q(0,b,c,d):=\begin{cases}
		-\chi_2(b),&\text{ if }c^2-4bd\neq0;\\
		(q-1)\chi_2(b),&\text{ if }c^2-4bd=0.\\
		\end{cases}$$
	\end{remark}
	\end{comment}

	\begin{lemma}\cite[Theorem $5.4$]{Lidl}\label{item11} If $\chi$ is a nontrivial multiplicative character of $\Fq^*$, then
	\begin{center}
		$\sum\limits_{g\in\Fq^*}{\chi(g)} = 0.$
	\end{center}
	\end{lemma}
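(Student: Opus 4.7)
The plan is to use the classical character orthogonality trick: exploit the fact that multiplication by any fixed nonzero element permutes $\Fq^*$, so the sum in question is invariant under this reparametrization, and combine this with nontriviality of $\chi$ to force the sum to vanish.

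More concretely, since $\chi$ is a nontrivial multiplicative character on $\Fq^*$, by definition there exists some $h \in \Fq^*$ with $\chi(h) \neq 1$. Denoting the sum by $S := \sum_{g \in \Fq^*} \chi(g)$, I would multiply $S$ by $\chi(h)$ and use multiplicativity of $\chi$ to write
\begin{equation*}
\chi(h) \cdot S \; = \; \sum_{g \in \Fq^*} \chi(h)\chi(g) \; = \; \sum_{g \in \Fq^*} \chi(hg).
\end{equation*}
Next, I would perform the substitution $g' = hg$. Since $h \in \Fq^*$, the map $g \mapsto hg$ is a bijection from $\Fq^*$ to itself, so the rightmost sum simply reindexes as $\sum_{g' \in \Fq^*} \chi(g') = S$. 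This yields the equation $(\chi(h)-1)\,S = 0$, and since $\chi(h) - 1 \neq 0$, we conclude $S = 0$.

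There is essentially no real obstacle in this argument; the whole proof rests on the elementary observation that the multiplicative group $\Fq^*$ acts on itself regularly by translation. The only point one must verify carefully is the existence of $h$ with $\chi(h) \neq 1$, which follows immediately by unpacking the definition of a nontrivial character (if no such $h$ existed, $\chi$ would coincide with the trivial character, contradicting the hypothesis).
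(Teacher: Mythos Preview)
Your proof is correct and is precisely the standard argument for this classical orthogonality relation. Note that the paper does not actually supply its own proof of this lemma: it is quoted as \cite[Theorem~5.4]{Lidl}, so there is no in-paper proof to compare against, but your argument is exactly the one found in that reference.
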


	\begin{remark}
		The discriminant of a cubic polynomial $f(x)=ax^3+bx^2+cx+d$ is given by 
		\begin{equation}\label{item35}
		\Delta=18abcd-4b^3d+b^2c^2-4ac^3-27a^2d^2,
		\end{equation}
		where $\Delta\neq 0$ if and only if the roots of $f(x)$ are distinct. In addition, every root of $f(x)$ is an element of $\Fq$ if $\Delta=0$.
	\end{remark}

	From here, let $\Delta(a,b,c,d)$ denote the discriminant of the polynomial $f(x)=ax^3+bx^2+cx+d$.
	
	\begin{lemma}\label{item10} Let $a,b,c,d\in\Fq$ be elements in $\Fq$, with $a\neq0$. If $\alpha_1,\alpha_2,\alpha_3\in\F_{q^6}$ are the roots of the polynomial $f(x):=ax^3+bx^2+cx+d$, then
		$$\sum\limits_{x\in\Fqn} \chi_2\big(f(x)\big)=\begin{cases}
		-\omega_q(a,b,c,d)^n-\overline{\omega_q(a,b,c,d)}^{\, n},&\text{ if }\Delta\neq0;\\
		-\chi_2(a\alpha_1-a\alpha_2),&\text{ if }f(x)=a(x-\alpha_1)^2(x-\alpha_2);\\
		0,&\text{ if }f(x)=a(x-\alpha_1)^3,\\
		\end{cases}$$
	where $\Delta:=\Delta(a,b,c,d)$ as defined in Equation \eqref{item35} and the complex number $\omega_q(a,b,c,d)$ is defined as in Theorem \ref{item9}.
	\end{lemma}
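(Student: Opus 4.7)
The plan is to count $\Fqn$-rational points on the projective closure of $\mathcal{C}\colon y^2=f(x)$ in two different ways and equate them. For each fixed $x\in\Fqn$, the number of $y\in\Fqn$ with $y^2=f(x)$ equals $1+\chi_2(f(x))$, since this gives $2$, $0$, or $1$ according as $f(x)$ is a non-zero square, a non-square, or zero; the convention $\chi_2(0)=0$ is precisely what makes this uniform. Homogenizing $y^2=f(x)$ as $Y^2Z=aX^3+bX^2Z+cXZ^2+dZ^3$ and intersecting with $Z=0$ leaves only $(0:1:0)$, a single $\Fqn$-rational point at infinity for every $n$. In view of Remark~\ref{item8}, summing over all $x$ therefore gives
$$N_n(\mathcal{C})=q^n+1+\sum_{x\in\Fqn}\chi_2\bigl(f(x)\bigr).$$

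When $\Delta(a,b,c,d)\neq 0$ the curve $\mathcal{C}$ is elliptic, and Theorem~\ref{item9} also yields $N_n(\mathcal{C})=q^n+1-\omega_q(a,b,c,d)^n-\overline{\omega_q(a,b,c,d)}^{\,n}$. Equating the two expressions and cancelling $q^n+1$ produces the first branch of the formula.

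When $\Delta(a,b,c,d)=0$, the remark immediately preceding the lemma places every root of $f$ in $\Fq\subseteq\Fqn$, so the shifted sums below are ordinary sums over the base field. If $f(x)=a(x-\alpha_1)^2(x-\alpha_2)$ with $\alpha_1\neq\alpha_2$, then $\chi_2((x-\alpha_1)^2)=1$ for $x\neq\alpha_1$, so multiplicativity of $\chi_2$ reduces $\sum_x\chi_2(f(x))$ to $\chi_2(a)\sum_{x\in\Fqn\setminus\{\alpha_1\}}\chi_2(x-\alpha_2)$; since $\sum_{x\in\Fqn}\chi_2(x-\alpha_2)=0$ by Lemma~\ref{item11} (together with $\chi_2(0)=0$), the sum collapses to $-\chi_2(a)\chi_2(\alpha_1-\alpha_2)=-\chi_2(a\alpha_1-a\alpha_2)$, which is the stated value. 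If instead $f(x)=a(x-\alpha_1)^3$, the analogous manipulation leaves $\chi_2(a)\sum_{x\in\Fqn\setminus\{\alpha_1\}}\chi_2(x-\alpha_1)$, which vanishes for the same reason.

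The only delicate point I expect is the bookkeeping: one must check that the projective closure genuinely contributes exactly one $\Fqn$-rational point at infinity regardless of $n$, and that the convention $\chi_2(0)=0$ is compatible with the $1+\chi_2(\cdot)$ square-count identity so that no double-counting occurs at $x=\alpha_j$. Once these are in place the remainder is a straightforward application of Lemma~\ref{item11} and the multiplicativity of $\chi_2$.
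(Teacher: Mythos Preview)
Your proposal is correct and follows essentially the same route as the paper: count $N_n(\mathcal{C})$ both via $1+\sum_x[1+\chi_2(f(x))]$ and via Theorem~\ref{item9} in the nonsingular case, and for $\Delta=0$ strip off the repeated-root square factor and apply Lemma~\ref{item11}. The only cosmetic difference is that the paper treats the double-root and triple-root cases in one stroke by writing $f(x)=a(x-\alpha)^2(x-\beta)$ with $\alpha,\beta\in\Fq$ possibly equal, so that the answer $-\chi_2(a\alpha-a\beta)$ automatically yields $0$ when $\alpha=\beta$ via the convention $\chi_2(0)=0$, whereas you split these into two separate computations.
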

	\begin{proof}
		Let $\Ca$ be the curve $y^2=ax^3+cx^2+cx+d$. We observe that
		$$N_n(\mathcal{C})=1+\sum\limits_{x\in\Fqn}\left[1+\chi_2(ax^3+bx^2+cx+d)\right].$$
		By Theorem \ref{item9}, if $\Delta\neq 0$, we have
		$$N_n(\mathcal{C})=1+\sum\limits_{x\in\Fqn}\left[1+\chi_2(ax^3+bx^2+cx+d)\right]=q^n+1-\omega_q(a,b,c,d)^n-\overline{\omega_q(a,b,c,d)}^{\, n},$$
		As 
		$\sum_{x\in\Fqn} 1=q^n,$
		the result follows. If $\Delta=0$, then there exists a root $\alpha\in\mathbb{F}_{q}$ of the polynomial $ax^3+bx^2+cx+d$ with multiplicity $\geq 2$. Hence, $ax^3+bx^2+cx+d=a(x-\alpha)^2(x-\beta)$, with $\alpha,\beta\in\mathbb{F}_{q}$. Thus, from the relation
		$$\chi_2(ax^3+bx^2+cx+d)\!=\chi_2\left((x-\alpha)^2\right)\chi_2\big(a(x-\beta)\big)=\begin{cases}
		\chi_2\big(a(x-\beta)\big),&\text{ if }x\neq \alpha;\\
		0,&\text{ if }x= \alpha,\\
		\end{cases}$$
		and Lemma \ref{item11}, we have
		$$\begin{aligned}
		\sum\limits_{x\in\Fqn} \chi_2(ax^3+bx^2+cx+d)&=\sum\limits_{x\in\Fqn\backslash\{\alpha\}}\chi_2\big(a(x-\alpha)^2(x-\beta)\big)\\
		&=\sum\limits_{x\in\Fqn\backslash\{\alpha\}}\chi_2(ax-a\beta)\\
		&=-\chi_2(a\alpha-a\beta).
		\end{aligned}\\$$
		This complete the proof.$\hfill\qed$
		
	\end{proof}

	As we use Lemma \ref{item10} in most results in this paper, it is convenient to introduce the following notation. We define the function $\Delta':\Fq^4\rightarrow\{1,2,3\}$ by letting
	$$\Delta'(a,b,c,d)=\begin{cases}
	1,&\text{ if }\alpha_i\neq\alpha_j\text{ for all }1\leq i<j\leq 3;\\
	2,&\text{ if }\alpha_i=\alpha_j\neq\alpha_k\text{ for some }\{i,j,k\}=\{1,2,3\};\\
	3,&\text{ if }\alpha_1=\alpha_2=\alpha_3,
	\end{cases}$$
	where $\alpha_1,\alpha_2,\alpha_3$ are the roots of the polynomial $f(x):=ax^3+bx^2+cx+d$. From this definition, it follows that $\Delta(a,b,c,d)\neq0$ if and only if $\Delta'(a,b,c,d)=1$. We also define the function $\alpha:\{(x_1,x_2,x_3,x_4)\in\Fq^4:\Delta'(a,b,c,d)=2\}\rightarrow\Fq$ by 
	$$\alpha(a,b,c,d)= a\alpha_1-a\alpha_2,$$
	where $f(x)=a(x-\alpha_1)^2(x-\alpha_2)$.

	\begin{comment}
	\begin{lemma}\cite[Theorem 2.2]{LiNi}, \label{item81}
		The trace function satisfies the following:
		\begin{enumerate}[(a)]
			\item $\tr_{\Fqn/\Fq}(\alpha+\beta)=\tr_{\Fqn/\Fq}(\alpha)+\tr_{\Fqn/\Fq}(\beta)$, for all  $\alpha,\beta\in\Fqn$;
			\item $\tr_{\Fqn/\Fq}(\lambda\alpha)=\lambda\tr_{\Fqn/\Fq}(\alpha)$, for all  $\lambda\in\Fq$ e $\alpha\in\Fqn$;
			\item $\tr_{\Fqn/\Fq}(\alpha)=n\alpha$, for all $\alpha\in\Fq$;
			\item $\tr_{\Fqn/\Fq}(\alpha)=\tr_{\Fqn/\Fq}(\alpha^q)$, for all  $\alpha\in\Fqn$.
		\end{enumerate}
	\end{lemma}
	\end{comment}
	
	\begin{lemma}\cite[Lemma $7.3$]{Lidl}\label{item2}
		Let $m$ be a positive integer. We have
		$$\sum_{a\in\Fq}a^m=\begin{cases}
		0,\text{ if }(q-1)\nmid m\text{ or }m=0;\\
		-1,\text{ if }(q-1)\mid m\text{ and }m\neq0,\\
		\end{cases}$$
		where $0^0:=1$.
	\end{lemma}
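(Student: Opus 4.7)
The plan is to split the sum according to whether $a=0$ or $a\in\Fq^*$, and then exploit the fact that $\Fq^*$ is a cyclic group of order $q-1$. Since $a=0$ contributes $0^m=0$ when $m>0$ and $0^0=1$ when $m=0$, the interesting part of the sum is always $\sum_{a\in\Fq^*}a^m$, reduced modulo the characteristic of $\Fq$.

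First I would dispose of the trivial case $m=0$: every summand equals $1$ (including $0^0=1$), so the total is $q\cdot 1=0$ in $\Fq$, matching the first case of the statement. Next, for $m>0$ with $(q-1)\mid m$, I would apply the identity $a^{q-1}=1$ valid for all $a\in\Fq^*$, which gives $a^m=1$ for each such $a$, hence $\sum_{a\in\Fq^*}a^m=q-1=-1$ in $\Fq$, and adding $0^m=0$ yields $-1$ as claimed.

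The remaining case is $m>0$ with $(q-1)\nmid m$. Here I would fix a generator $g$ of the cyclic group $\Fq^*$ and rewrite
\[
\sum_{a\in\Fq^*}a^m=\sum_{i=0}^{q-2}g^{im}=\frac{g^{m(q-1)}-1}{g^m-1}.
\]
The numerator vanishes because $g^{q-1}=1$, while the denominator is nonzero precisely because $(q-1)\nmid m$ forces $g^m\neq 1$. Thus the sum over $\Fq^*$ is $0$, and together with $0^m=0$ the total sum is $0$, as required.

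There is no genuine obstacle here; the only point needing a little care is the $m=0$ case with the convention $0^0=1$, together with remembering that $q\equiv 0$ in $\Fq$ so that $|\Fq|=q$ contributes $0$ and $|\Fq^*|=q-1$ contributes $-1$ in the field.
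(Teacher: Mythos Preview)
Your argument is correct and is in fact the standard proof of this elementary fact about finite fields. The paper does not supply its own proof of this lemma---it simply cites \cite[Lemma~7.3]{Lidl}---so there is nothing to compare against; your write-up fills in exactly what the citation covers.
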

	\begin{remark}\label{item1}
	It is known that $\chi_2(a)=(-1)^n\in\C$ if and only if $a^{\frac{q-1}{2}}=(-1)^n\in\Fq$.
	Indeed, by definition, $\chi_2(b)=-1$ if and only if $b$ is not a square in $\F_q^*$, so $b=\theta^i$, where $\theta$ is a generator of the group $\F_q^*$ and $i$ is odd. It follows that $b^{\frac{q-1}2}=(\theta^{\frac{q-1}2})^i=(-1)^i=-1$.  In similar way, we have that $\chi_2(b)=-1$ if and only if $b^{\frac{q-1}2}=1$. 
	\end{remark}
	
	The following lemma characterizes, modulo $p$, the trace of Frobenius
	$$\tau_q(a,b,c,d):=\omega_q(a,b,c,d)+\overline{\omega_q(a,b,c,d)}$$
	of an elliptic curve given by equation $y^2=ax^3+bx^2+cx+d$ over $\Fp$. As we are interested in odd characteristic, we suppose $b=0$. This lemma enables us to calculate the number of points on elliptic curves whose coefficients are in a prime field $\Fp$.

	\begin{lemma}\label{item12}
		Let $\Ca: y^2=Ax^3+Bx+C$ be  an elliptic curve over $\Fpn$, where $p$ is an odd prime. The trace of Frobenius of $\Ca$ satisfies
the relation
		$$\tau_{p^n}(A,0,B,C)\equiv \sum_{l=\lceil\frac{p^n-1}{6}\rceil}^{\lfloor \frac{p^n-1}{4}\rfloor} \binom{\frac{p^n-1}{2}}{2l}\binom{2l}{\frac{p^n-1-2l}{2}}A^{\frac{p^n-1}{2}-l}B^{3l-\frac{p^n-1}{2}}C^{\frac{p^n-1}{2}-2l}\pmod{p}.$$
		
	\end{lemma}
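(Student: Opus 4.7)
The starting point is the identity
$$\tau_{p^n}(A,0,B,C) \;=\; -\sum_{x\in\Fpn}\chi_2(Ax^3+Bx+C),$$
which comes from $N_n(\Ca)=1+\sum_{x}\bigl[1+\chi_2(f(x))\bigr]=p^n+1-\tau_{p^n}(A,0,B,C)$ together with Theorem \ref{item9}. My plan is to turn this character-sum expression into a polynomial one modulo $p$ via Remark \ref{item1}, which gives $\chi_2(a)\equiv a^{(p^n-1)/2}\pmod p$ (valid for every $a\in\Fpn$, including $a=0$ since $0^{(p^n-1)/2}=0$).

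Substituting this congruence and applying the multinomial theorem yields
$$\tau_{p^n}(A,0,B,C)\;\equiv\;-\sum_{i+j+k=(p^n-1)/2}\binom{(p^n-1)/2}{i,j,k}A^iB^jC^k\sum_{x\in\Fpn}x^{3i+j}\pmod p.$$
At this point I would invoke Lemma \ref{item2}: the inner sum is $0$ unless $3i+j$ is a positive multiple of $p^n-1$, in which case it equals $-1$. Since $0\le 3i+j\le 3(p^n-1)/2$ (using $i+j\le(p^n-1)/2$), the only surviving case is $3i+j=p^n-1$ (the case $3i+j=0$ forces $i=j=0$, producing $C^{(p^n-1)/2}\cdot p^n\equiv 0\pmod p$).

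The final step is a reparametrization: setting $l$ by $j=3l-(p^n-1)/2$ forces $i=(p^n-1)/2-l$ and $k=(p^n-1)/2-2l$, so the three inequalities $i,j,k\ge 0$ translate exactly into $\lceil (p^n-1)/6\rceil\le l\le \lfloor (p^n-1)/4\rfloor$. It remains to check the combinatorial identity
$$\binom{(p^n-1)/2}{i,j,k}\;=\;\binom{(p^n-1)/2}{2l}\binom{2l}{(p^n-1-2l)/2},$$
which follows at once by expanding both sides as ratios of factorials (noting $2l=i+j$ and $(p^n-1-2l)/2=i$).

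The proof is essentially a mechanical expansion, so I do not expect a serious obstacle; the only subtle point is the bookkeeping to confirm that $m=1$ is the sole admissible multiple of $p^n-1$ realized by $3i+j$ in the allowed range, and that the resulting index set for $l$ matches the stated bounds. Once that is verified, the overall sign works out (the two $-1$'s from Lemma \ref{item2} and from the definition of $\tau_{p^n}$ cancel) and the claimed congruence follows.
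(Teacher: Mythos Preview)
Your proposal is correct and follows essentially the same route as the paper: both start from $\tau_{p^n}=-\sum_x\chi_2(f(x))$, replace $\chi_2$ by the $(p^n-1)/2$-th power via Remark~\ref{item1}, expand, and apply Lemma~\ref{item2} to isolate the single exponent $p^n-1$. The only cosmetic difference is that the paper uses two nested binomial expansions (first separating $C$, then splitting $Ax^3+Bx$) where you use the multinomial theorem in one step; the resulting reparametrization and bounds for $l$ coincide.
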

	
	\begin{proof}
		We observe that
		$$1+\chi(Ax^3+Bx+C)=\begin{cases}
		0,\text{ if }Ax^3+Bx+C\text{ is not a square in }\Fpn;\\
		2,\text{ if }Ax^3+Bx+C\text{ is a square in }\Fpn.\\
		\end{cases}$$ 
		Therefore, we have
		$$N_1(\Ca)=1+\sum_{x\in\Fpn}[1+\chi(Ax^3+Bx+C)]=p^n+1+\sum_{x\in\Fpn}\chi(Ax^3+Bx+C).$$
		By  Remark \ref{item1},
		$$\begin{aligned}\sum_{x\in\Fpn}\chi(x^3+Ax+B)
		&\equiv \sum_{x\in\Fpn}(Ax^3+Bx+C)^{\frac{p^n-1}{2}}\\
		&\equiv \sum_{x\in\Fpn}\sum_{i=0}^{\frac{p^n-1}{2}}\binom{\frac{p^n-1}{2}}{i}(Ax^3+Bx)^i C^{\frac{p^n-1}{2}-i}\\
		&\equiv \sum_{x\in\Fpn}\sum_{i=0}^{\frac{p^n-1}{2}}\binom{\frac{p^n-1}{2}}{i}C^{\frac{p^n-1}{2}-i}\sum_{j=0}^{i}\binom{i}{j}A^j B^{i-j}x^{2j+i}\\
		&\equiv \sum_{i=0}^{\frac{p^n-1}{2}}\sum_{j=0}^{i}\binom{\frac{p^n-1}{2}}{i}\binom{i}{j}A^jB^{i-j}C^{\frac{p^n-1}{2}-i}\sum_{x\in\Fp}x^{2j+i}\pmod{p}.\\
		\end{aligned}$$
		
		By Lemma \ref{item2}, the  sum  $\sum_{x\in\Fpn}x^{2j+i}$ is nonzero only if $2j+i\equiv 0\pmod{p^n-1}$ and $2j+i\neq 0$, and in these cases, the sum is $-1$. In addition, since  
		$$2j+i\le 2\cdot\frac{p^n-1}{2}+\frac{p^n-1}{2}=\frac{3(p^n-1)}{2}< 2(p^n-1),$$
it follows that 
		$$\begin{aligned}N_1(\Ca)-p^n-1
		&\equiv\sum_{i=0}^{\frac{p^n-1}{2}}\sum_{j=0}^{i}\binom{\frac{p^n-1}{2}}{i}\binom{i}{j}A^jB^{i-j}C^{\frac{p^n-1}{2}-i}\sum_{x\in\Fpn}x^{2j+i}\\
		&\equiv -\sum_{l=\lceil\frac{p^n-1}{6}\rceil}^{\lfloor \frac{p^n-1}{4}\rfloor} \binom{\frac{p^n-1}{2}}{2l}\binom{2l}{\frac{p^n-1-2l}{2}}A^{\frac{p^n-1-2l}{2}}B^{2l-\frac{p^n-1-2l}{2}}C^{\frac{p^n-1}{2}-2l}\\
		&\equiv -\sum_{l=\lceil\frac{p^n-1}{6}\rceil}^{\lfloor \frac{p^n-1}{4}\rfloor} \binom{\frac{p^n-1}{2}}{2l}\binom{2l}{\frac{p^n-1-2l}{2}}A^{\frac{p^n-1-2l}{2}}B^{3l-\frac{p^n-1}{2}}C^{\frac{p^n-1}{2}-2l}\pmod{p}.\\
		\end{aligned}$$ 
		Since $N_1(\Ca)=p^n+1-\tau_{p^n}(A,0,B,C)$, the result follows.
		$\hfill\qed$
		
	\end{proof}
\begin{comment}
\begin{corollary}\label{item711}
	Let $\Ca: y^2=x^3+x$ be  an elliptic curve over $\Fp$, where $p\equiv 1\mod{4}$ is a prime. Then
	$$N_1(\mathcal{C})-p-1\equiv -\binom{\frac{p-1}{2}}{\frac{p-1}{4}} \pmod{p}.$$
\end{corollary} 
\end{comment}

\begin{comment}
	\begin{theorem}\cite[Theorem 2.3.1, Chapter V]{Sil}\label{item66} Let $E:y^2+a_1xy+a_2y=x^3+a_3x^2+a_4x+a_5$ be an elliptic curve over $\Fq$. Then, there is a complex number $\omega$ such that $|\omega|=\sqrt{q}$ and
		$$|E(\Fqk)|=q^k+1-\omega^k-\overline{\omega}^{\,k},$$
		for all positive integer $k$. The number $\omega$ is unique unless conjugation. 
	\end{theorem}

	Now, we consider a elliptic curve given by the equation $y^2=Ax^3+Bx+C$ over $\Fp$, where $p$ is a prime. From Lemma \ref{item12}, 
	\begin{equation}\label{item38}
	\omega_q(A,0,B,C)+\overline{\omega_q(A,0,B,C)}\equiv \mkern-6mu\sum_{l=\lceil\frac{p-1}{6}\rceil}^{\lfloor \frac{p-1}{4}\rfloor} \mkern-10mu\binom{\frac{p-1}{2}}{2l}\mkern-3mu\binom{2l}{\frac{p-1-2l}{2}}A^{\frac{q-1-2l}{2}}B^{3l-\frac{p-1}{2}}C^{\frac{p-1}{2}-2l}\mkern-10mu\pmod{p}.
	\end{equation}
\end{comment}
	%\begin{remark}
	%	The prime number $p$ in Lemma \ref{item12} can be replaced by a power of $p$ with the same proof. We will use this alternative version of this lemma in section \ref{sec7}.
	%\end{remark}
	In Lemma \ref{item12} there is an abuse of language when we write 
	$$\sum_{x\in\Fp}\chi(x^3+Ax+B)\equiv \sum_{x\in\Fp}(Ax^3+Bx+C)^{\frac{p-1}{2}}$$
	since the left summation is over $\C$ and the right summation is over $\Fq$. In fact, we do it many times throughout this paper.

	Since $|\tau(A,0,B,C)|=|\omega_p(A,0,B,C)+\overline{\omega_p(A,0,B,C)}|\leq\lfloor2\sqrt{p}\rfloor$ (by Theorem \ref{item9}), we can use Lemma \ref{item12} to calculate the complex number $\omega_p(A,0,B,C)$ in the case where $p\geq 17$, as in the following example.
	
\begin{example}\label{item39}
	Let $\mathcal{J}:x^2=y^3+2$ be an elliptic curve over $\F_{19}$. From Lemma \ref{item12},
	$$\omega_{19}(1,0,0,2)+\overline{\omega_{19}(1,0,0,2)}\equiv\sum\limits_{l=3}^{4}\binom{9}{2l}\binom{2l}{9-l}0^{3l-9}2^{9-2l}\equiv 7\pmod{19}.$$
	Since $|\omega_{19}(1,0,0,2)+\overline{\omega_{19}(1,0,0,2)}|\leq\lfloor2\sqrt{19}\rfloor=8$, we have $\omega_{19}(1,0,0,2)+\overline{\omega_{19}(1,0,0,2)}=7$. Thus, using that $|\omega_{19}(1,0,0,2)|=\sqrt{19}$, we must have
	$$\omega_{19}(1,0,0,2)=\frac{7}{2}+i \sqrt{19-\frac{7^2}{2^2}}=\frac{7}{2}+i \sqrt{\frac{27}{4}}.$$
	In addition, the number of rational points of $\Ja$ over $\F_{19^n}$ is given by
	$$N_n(\Ja)=19^n+1-\left(\frac{7}{2}+i \sqrt{\frac{27}{4}}\right)^n-\left(\frac{7}{2}-i \sqrt{\frac{27}{4}}\right)^n.$$
\end{example}

We will use this technique in the examples of this paper in order to calculate the number of rational points on suitable curves. In \cite{sun2013congruences}, the author present congruences similar to congruence in Lemma \ref{item12}. In fact, some values of $\omega_q$ are well-known in the case where $p$ is a prime number, e.g. see Theorem $6.2.9$ and Theorem $6.2.10$ in \cite{berndt1998gauss}.

\begin{comment}
	\begin{definition}\label{item13}
		Let $p\neq 3$ be a prime number. Let define 
$\kappa_7=1$ and $\kappa_{13}=-5$ and for $p\equiv 1\pmod{3}$,  $p\ge 19$, $\kappa_p$ is defined as  the unique integer that satisfies  $|\kappa_p|\leq 2\sqrt{p}$ and  
		$$\kappa_p\equiv-\binom{\frac{p-1}{2}}{\frac{p-1}{3}}\cdot4^{-\frac{p-1}{6}}\pmod{p}.$$
 In addition, for $p\equiv 2\pmod{3}$, let define $\kappa_p=0$.
	\end{definition}

From this definition, it is not clear that $\kappa_p$ exists for all $p$. But, as we will see, Lemma \ref{item3}  guarantee  the existence of $\kappa_p$. 	
	\begin{definition}\label{item14}
	Let $\pi_p$ denote the complex number 	
		$$\frac{-\kappa_p}{2}+i\, \sqrt{p-\dfrac{\kappa_p^2}{4}}.$$
	\end{definition}

	\begin{lemma}\label{item3} Let $E:y^2=x^3+4^{-1}$ be an elliptic curve over $\Fp$, with $p$ prime and $p\not\in \{2,3\}$. Then 
		$$|E(\Fpk)|=p^k+1-\pi_p^k-\overline{\pi_p}^{\,k},$$
		for all positive integer $k$.
	\end{lemma}
	
	\begin{proof}
		By Theorem \ref{item4}, there exists  a complex number $\omega$ such that $|\omega|=\sqrt{p}$ for which
		$$|E(\Fp)|=p+1-\omega-\overline{\omega}.$$	
		By  Corollary  \ref{item711} and  Definition \ref{item76}, we conclude that $\omega=\pi_p$. Now, by Theorem \ref{item66}, we have	
		$$|E(\Fpk)|=p^k+1-\pi_p^k-\overline{\pi_p}^{\,k},$$
		for all positive integer $k$.$\hfill\qed$
	\end{proof}
	
\end{comment}

	\section{Rational Points on Curves of the Form $y^2=f(x)$}\label{sec4}
	
		Throughout this section, for an event $A$, let 
		$$\um_A:=\begin{cases}
		1,&\text{ if }A\text{ occurs};\\
		0,&\text{ if }A\text{ does not occur}\\
		\end{cases}$$
		be the indicator function of the event $A$. In algebraic geometry, a hyperelliptic curve of genus $g\geq 1$ is an algebraic curve given by equation
		$$ y^{2}+h(x)y=f(x),$$
		where $f(x)$ is a polynomial of degree $2g+1$ or $2g+2$ with distinct roots and $h(x)$ is a polynomial of degree $\leq g+1$. When the characteristic of the field is not $2$, we can take $h(x)=0$ (make the change of variables, by taking $y=z-\frac{1}{2}h(x)$ and $x=w$). Hyperelliptic curves are  useful in cryptography (for example, see \cite{cohen2005handbook}). In Ulas \cite{ulas2007rational} and Nelson, Solymosi, Tom and Wong \cite{nelsonnumber}, the authors present results concerning the number of rational points on hyperelliptic curves. In this Section, we present the number of rational points on curves of the form $y^2=f(x)$, where $f(x)\in\Fq$ is a suitable polynomial of degree $4$ or $6$. In particular, we give the number of points on most hyperelliptic curves of genus $1$ when $q$ is odd.

		\begin{remark}\label{item32}
			Let $\Lambda_1,\Lambda_2\subset\Fq$ be two sets with the same number of elements. Let $f$ be a bijective map from $\Lambda_1$ to $\Lambda_2$ and $g:\Lambda_2\rightarrow\Fq$ an arbitrary function. Then 
			$$\sum\limits_{x\in\Lambda_1}\chi_i\big(g(f(x))\big)=\sum\limits_{z\in\Lambda_2}\chi_i\big(g(z)\big).$$
		\end{remark}
	
		\begin{lemma}\label{item15}
			 If $a,b,c,d\in\Fq$, with $a\neq0$ and $d\neq0$, then
		$$\sum\limits_{x\in\Fqn} \chi_2(ax^3+bx^2+cx+d)\chi_2(x)=\begin{cases}
		-\omega_q(d,c,b,a)^n-\overline{\omega_q(d,c,b,a)}^{\, n}-\chi_2(a),&\text{ if }\Delta'=1;\\
		-\chi_2(\alpha)-\chi_2(a),&\text{ if }\Delta'= 2;\\
		-\chi_2(a),&\text{ if }\Delta'= 3,\\
		\end{cases}$$
		where $\Delta':=\Delta'(a,b,c,d)$, $\alpha:=\alpha(a,b,c,d)$ and $\omega_q(d,c,b,a)$ is defined as in Theorem \ref{item9}.
	\end{lemma}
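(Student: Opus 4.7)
My plan is to reduce this sum to a cubic character sum of the type already handled by Lemma \ref{item10}, via the substitution $x \mapsto 1/x$. The key observation is that the extra factor $\chi_2(x)$ is exactly what cancels the denominator $x^3$ that arises when one writes $f(1/x) = g(x)/x^3$, where $g(x) := dx^3 + cx^2 + bx + a$ is the reciprocal polynomial of $f$. Since $\chi_2$ has order two, we will get $\chi_2(f(x))\chi_2(x) = \chi_2(g(1/x))$ after a clean cancellation.

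Concretely, I would first isolate $x = 0$, which contributes nothing because $\chi_2(0) = 0$. For $x \in \Fqn^*$, setting $y = 1/x$ and using that $\chi_2(y^{-3}) = \chi_2(y)$ and $\chi_2(y)^2 = 1$, the identity $\chi_2(f(x))\chi_2(x) = \chi_2(g(y))$ is an immediate computation. Then Remark \ref{item32} gives
$$\sum_{x\in\Fqn} \chi_2(f(x))\chi_2(x) \;=\; \sum_{y\in\Fqn^*} \chi_2(g(y)) \;=\; \sum_{y\in\Fqn} \chi_2(g(y)) \;-\; \chi_2(a).$$
I would then apply Lemma \ref{item10} to $g$. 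The discriminant data of $g$ coincides with that of $f$: since $d \neq 0$, none of the roots of $f$ is zero, and the map $\alpha_i \mapsto 1/\alpha_i$ is a bijection preserving multiplicities, so $\Delta'(d,c,b,a) = \Delta'(a,b,c,d)$.

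Cases 1 and 3 then drop out immediately from Lemma \ref{item10}, producing $-\omega_q(d,c,b,a)^n - \overline{\omega_q(d,c,b,a)}^{\,n} - \chi_2(a)$ and $-\chi_2(a)$ respectively. The only step requiring real care is Case 2: Lemma \ref{item10} applied to $g$ outputs $-\chi_2\bigl(\alpha(d,c,b,a)\bigr) = -\chi_2(d/\alpha_1 - d/\alpha_2)$, with $\alpha_1$ the double root of $f$ (hence $1/\alpha_1$ the double root of $g$). I would then use the factorization $f(x) = a(x-\alpha_1)^2(x-\alpha_2)$ to substitute $d = -a\alpha_1^2\alpha_2$ into this expression and simplify, rewriting the result in terms of $\alpha(a,b,c,d) = a\alpha_1 - a\alpha_2$ to match the claimed formula. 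The main obstacle is precisely this bookkeeping in Case 2, tracking how the distinguished root and the constant $\alpha$ transform under the reciprocation $f \leftrightarrow g$; everything else is a direct consequence of the substitution and Lemma \ref{item10}.
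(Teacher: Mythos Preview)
Your approach is exactly the paper's: both proofs drop the $x=0$ term, use $\chi_2(x)=\chi_2(x^{-3})$ on $\Fqn^*$, substitute $z=1/x$, and invoke Lemma~\ref{item10} on the reciprocal cubic $g(z)=dz^3+cz^2+bz+a$, picking up the extra $-\chi_2(a)$ from the missing $z=0$ term.

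Your instinct that Case~2 is where the real work lies is correct --- and in fact you will \emph{not} be able to match the formula as printed. Carrying out the substitution you describe gives
\[
\alpha(d,c,b,a)\;=\;d\Bigl(\tfrac{1}{\alpha_1}-\tfrac{1}{\alpha_2}\Bigr)\;=\;(-a\alpha_1^2\alpha_2)\cdot\tfrac{\alpha_2-\alpha_1}{\alpha_1\alpha_2}\;=\;\alpha_1\cdot a(\alpha_1-\alpha_2)\;=\;\alpha_1\cdot\alpha(a,b,c,d),
\]
so $\chi_2\bigl(\alpha(d,c,b,a)\bigr)=\chi_2(\alpha_1)\,\chi_2\bigl(\alpha(a,b,c,d)\bigr)$, and there is no reason for the double root $\alpha_1$ to be a square. (For instance, $f(x)=(x-2)^2(x-1)$ over $\F_{11}$ gives a direct counterexample.) The paper's proof simply writes ``$\alpha:=\alpha(a,b,c,d)$'' at the end without justifying the swap, so this appears to be a slip in the \emph{statement}: the correct constant in Case~2 is $\alpha(d,c,b,a)$, which is precisely what your reduction produces. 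This reading is confirmed by Theorem~\ref{item17}, where the Case~$\Delta'=2$ formula explicitly uses \emph{both} $\alpha_1:=\alpha(a,b,c,d)$ (from Lemma~\ref{item10}) and $\alpha_2:=\alpha(d,c,b,a)$ (from the present lemma). So your argument is sound; just don't expect the bookkeeping to land on $\alpha(a,b,c,d)$.
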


	\begin{proof}
		Using the fact that $\chi_2(0)=0$, we have 
		$$\begin{aligned}
		\sum\limits_{x\in\Fqn} \chi_2(ax^3+bx^2+cx+d)\chi_2(x)
		& =\sum\limits_{x\in\Fqn^*} \chi_2(ax^3+bx^2+cx+d)\chi_2(x^{-3})\\
		& =\sum\limits_{x\in\Fqn^*} \chi_2(a+bx^{-1}+cx^{-2}+dx^{-3})\\
		& =\sum\limits_{z\in\Fqn^*} \chi_2(a+bz+cz^2+dz^3).\\
		\end{aligned}$$
		By Lemma \ref{item10}, 
		$$\sum\limits_{z\in\Fqn^*} \chi_2(a+bz+cz^2+dz^3)=\begin{cases}
		-\omega_q(d,c,b,a)^n-\overline{\omega_q(d,c,b,a)}^{\, n}-\chi_2(a),&\text{ if }\Delta'=1;\\
		-\chi_2(\alpha)-\chi_2(a),&\text{ if }\Delta'= 2;\\
		-\chi_2(a),&\text{ if }\Delta'= 3,\\
		\end{cases}$$
		where $\Delta':=\Delta'(a,b,c,d)$ and $\alpha:=\alpha(a,b,c,d)$.$\hfill\qed$
	\end{proof}

	\begin{theorem}\label{item16}
		For $a,b,c,d,e\in\Fq$, with $a\neq0$, let $\Ca: y^2=(ax^3+bx^2+cx+d)(x+e)$ be a curve over $\overline{\Fq}$. The number of rational points on $\Ca$ over $\Fqn$ satisfies
		$$N_n(\Ca)=\begin{cases}
		q^n+1-\omega_q(d',c',b',a')^n-\overline{\omega_q(d',c',b',a')}^{\, n}-\chi_2(a'),&\mkern-16mu\text{ if }d'\neq0\text{ and }\Delta'=1;\\
		q^n+1-\chi_2(\alpha)-\chi_2(a'),&\mkern-16mu\text{ if }d'\neq0\text{ and }\Delta'= 2,\\
		q^n+1-\chi_2(a'),&\mkern-16mu\text{ if }d'\neq0\text{ and }\Delta'= 3;\\
		q^n+1-\chi_2(a')-\chi_2(c'),&\mkern-16mu\text{ if }d'=0\text{ and }(b')^2\neq4a'c';\\
		q^n+1+(q^n-1)\chi_2(a')-\chi_2(c'),&\mkern-16mu\text{ if }d'=0\text{ and }(b')^2=4a'c';\\
		
		\end{cases}$$
		where $a'=a$, $b'=b-3ae$, $c'=3ae^2-2eb+c, d'=be^2-ae^3+d-ec$, $\Delta':=\Delta'(a',b',c',d')$ and $\alpha:=\alpha(a',b',c',d')$.
	\end{theorem}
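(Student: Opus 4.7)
The plan is to reduce $N_n(\Ca)$ to a character sum already evaluated in Lemma \ref{item15} (or Lemma \ref{item25} in a degenerate case) by means of a linear change of coordinates that brings out an explicit factor of the new variable on the right-hand side of the defining equation.

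First I would apply the bijective substitution $u = x + e$ on $\Fqn$; expanding $a(u-e)^3 + b(u-e)^2 + c(u-e) + d$ produces exactly the cubic $a'u^3 + b'u^2 + c'u + d'$ with the coefficients listed in the statement, so that $\Ca$ is transformed into the curve $y^2 = u(a'u^3+b'u^2+c'u+d')$. Counting affine $\Fqn$-points via the standard identity $\#\{y \in \Fqn : y^2 = g\} = 1 + \chi_2(g)$ (recalling $\chi_2(0)=0$) and adding $1$ for the unique point at infinity of the quartic model (per the convention of Remark \ref{item8}), I would obtain
$$N_n(\Ca) = q^n + 1 + \sum_{u \in \Fqn}\chi_2(u)\,\chi_2(a'u^3+b'u^2+c'u+d').$$

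If $d' \neq 0$, Lemma \ref{item15} applies verbatim with coefficients $(a',b',c',d')$ and its three cases, distinguished by $\Delta'(a',b',c',d') \in \{1,2,3\}$, deliver the first three expressions in the theorem. If $d' = 0$, the cubic factors as $u(a'u^2+b'u+c')$, hence $\chi_2(u)\chi_2(u(a'u^2+b'u+c')) = \chi_2(u^2)\chi_2(a'u^2+b'u+c')$, which equals $\chi_2(a'u^2+b'u+c')$ for $u \neq 0$ and $0$ for $u = 0$; the sum therefore reduces to $\sum_{u \in \Fqn}\chi_2(a'u^2+b'u+c') - \chi_2(c')$, and Lemma \ref{item25}, separating the cases $b'^2 - 4a'c' \neq 0$ and $b'^2 - 4a'c' = 0$, yields the last two expressions.

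The step requiring most care is the bookkeeping for the point at infinity: since the quartic model $y^2 = u(a'u^3+b'u^2+c'u+d')$ is in general singular at infinity, I would need to invoke Remark \ref{item8} carefully to confirm that its projective closure contributes exactly one $\Fqn$-rational point there regardless of the case, so that the ``$+1$'' in the displayed formula is correct in every branch---in particular in the degenerate subcase $d' = 0$, where an additional singularity appears at $u = 0$ and the small $-\chi_2(c')$ correction must be tracked separately.
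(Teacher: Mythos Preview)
Your proposal is correct and follows essentially the same route as the paper: both perform the substitution $u=x+e$ to reach the sum $\sum_{u\in\Fqn}\chi_2(u)\chi_2(a'u^3+b'u^2+c'u+d')$, then invoke Lemma~\ref{item15} when $d'\neq 0$ and Lemma~\ref{item25} after cancelling the factor $u$ when $d'=0$. The paper handles the point at infinity with the same bare ``$+1$'' you identify, so your extra care there is warranted but does not differ from the paper's (implicit) treatment.
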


	\begin{proof}
		We have 
		$$\begin{aligned}
		N_n(\Ca)
		& =1+\sum\limits_{x\in\Fqn} \left[1+\chi_2(ax^3+bx^2+cx+d)\chi_2(x+e)\right]\\
		&=q^n+1+\sum\limits_{z\in\Fqn}\chi_2(a(z-e)^3+b(z-e)^2+c(z-e)+d)\chi_2(z)\\
		&=\!q^n\!+1\!+\!\!\sum\limits_{z\in\Fqn}\chi_2(a(z^3\!-3z^2e+3ze^2\!-e^3)\!+b(z^2\!-2ze+e^2)+c(z\!-e)\!+d)\chi_2(z)\\
		&=\!q^n\!+\!1\!+\!\!\sum\limits_{z\in\Fqn}\!\chi_2(az^3\!+(b\!-3ae)z^2+(3ae^2\!-2eb+c)z+be^2\!-ae^3+d\!-ec)\chi_2(z).\\
		\end{aligned}$$
		Hence, the result follows by Lemma \ref{item15} in the case $be^2-ae^3+d-ec\neq0$. Otherwise, 
		$$\begin{aligned}
		N_n(\Ca)
		&=q^n+1+\sum\limits_{z\in\Fqn^*}\chi_2(az^3+(b-3ae)z^2+(3ae^2-2eb+c)z)\chi_2(\tfrac{1}{z})\\
		&=q^n+1+\sum\limits_{z\in\Fqn^*}\chi_2(az^2+(b-3ae)z+(3ae^2-2eb+c)).\\
		\end{aligned}$$
		In this case, the result follows by Lemma \ref{item25}.
		$\hfill\qed$
	\end{proof}

	\begin{example}
		Let $\Ja: y^2=(x^3+x^2-x+1)(x+1)$ be a curve over $\F_{73}$. In order to calculate $\omega_{73}(2,0,-2,1)$, we use Lemma \ref{item12} as in Example \ref{item39}. We note that
		$$\omega_{73}(2,0,-2,1)+\overline{\omega_{73}(2,0,-2,1)}\equiv \sum_{l=12}^{18} \binom{36}{2l}\binom{2l}{36-l}2^{36-l}(-2)^{3l-36}1^{36-2l}\equiv 16\pmod{73},$$
		then
		$\omega_{73}(2,0,-2,1)=8+3i $. Since $\Delta'(2,0,-2,1)=1$, Theorem \ref{item16} states that the number of rational points on $\Ja$ over $\F_{73^n}$ is given by
		$$N_n(\Ja)=73^n-\left(8+3i\right)^n-\left(8+3i\right)^n.$$
	\end{example}

	\begin{theorem}\label{item17}
		For $a,b,c,d\in\Fq$, with $a\neq 0$, let $\Ca: y^2=ax^6+bx^4+cx^2+d$ be a curve over $\Fq$. The number of rational points on $\Ca$ over $\Fqn$ is given by
		$$N_n(\Ca)=\begin{cases}
		q^n+1-\omega_1^n-\overline{\omega_1}^{\, n}-\omega_2^n-\overline{\omega_2}^{\, n}-\chi_2(a),&\text{ if }d\neq0\text{ and }\Delta'=1;\\
		q^n+1-\chi(\alpha_1)-\chi(\alpha_2)-\chi_2(a),&\text{ if }d\neq0\text{ and }\Delta'= 2;\\
		q^n+1-\chi_2(a),&\text{ if }d\neq0\text{ and }\Delta'= 3;\\
		q^n+1-\omega_1^n-\overline{\omega_1}^{\, n}-\chi_2(a)-\chi_2(c),&\text{ if }d=0\neq c\text{ and }b^2-4ac\neq0;\\
		q^n+1-\chi(\alpha_1)+(q^n-1)\chi(a)-\chi_2(c),&\text{ if }d=0\text{ and }b^2-4ac=0;\\
		q^n+1-\chi(\alpha_1)-\chi_2(a)-\chi_2(c),&\text{ if }c=d=0\text{ and }b^2-4ac\neq0,\\
		\end{cases}$$
		where $\omega_1:=\omega_q(a,b,c,d)$, $\omega_2:=\omega_q(d,c,b,a),\alpha_1:=\alpha(a,b,c,d),\alpha_2:=\alpha(d,c,b,a)$ and $\Delta':=\Delta'(a,b,c,d)$.
	\end{theorem}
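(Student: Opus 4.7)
The strategy is to reduce the character sum attached to the sextic $y^{2} = f(x^{2})$, with $f(u)=au^{3}+bu^{2}+cu+d$, to character sums on cubics via the substitution $u=x^{2}$, and then invoke Lemmas~\ref{item10}, \ref{item15}, and \ref{item25}.

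First, including the single rational point $(0:1:0)$ at infinity of the projective closure in $\mathbb{P}^{2}$, I would write
\begin{equation*}
N_n(\Ca) = 1 + \sum_{x \in \Fqn}\bigl[1 + \chi_{2}(f(x^{2}))\bigr] = q^{n}+1+S_{n}, \qquad S_{n} := \sum_{x\in\Fqn}\chi_{2}(f(x^{2})).
\end{equation*}
For $u\in\Fqn^{*}$, the equation $x^{2}=u$ has exactly $1+\chi_{2}(u)$ solutions, while $x=0$ corresponds to $u=0$. Separating these cases and using $\chi_{2}(0)=0$, I would deduce the uniform identity
\begin{equation*}
S_{n} = \sum_{u\in\Fqn}\chi_{2}(f(u)) + \sum_{u\in\Fqn}\chi_{2}(u\,f(u)),
\end{equation*}
which holds regardless of whether $d$ vanishes.

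When $d\neq 0$, I would apply Lemma~\ref{item10} to the first sum and Lemma~\ref{item15} to the second. If $\Delta'=1$, the first yields $-\omega_{1}^{n}-\overline{\omega_{1}}^{\,n}$ with $\omega_{1}=\omega_{q}(a,b,c,d)$, and the second yields $-\omega_{q}(d,c,b,a)^{n}-\overline{\omega_{q}(d,c,b,a)}^{\,n}-\chi_{2}(a)$; this is legal because $\Delta'(d,c,b,a)=\Delta'(a,b,c,d)$ whenever $d\neq 0$ (the roots of $dt^{3}+ct^{2}+bt+a$ are the reciprocals of the roots of $at^{3}+bt^{2}+ct+d$). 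For $\Delta'\in\{2,3\}$ the analogous matching delivers the $\alpha_{1}=\alpha(a,b,c,d)$ and $\alpha_{2}=\alpha(d,c,b,a)$ terms or just the residual $-\chi_{2}(a)$.

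When $d=0$, Lemma~\ref{item15} does not apply directly. Instead I factor $f(u)=u(au^{2}+bu+c)$, so that $u\,f(u)=u^{2}(au^{2}+bu+c)$ and
\begin{equation*}
\sum_{u\in\Fqn}\chi_{2}(u\,f(u)) = \sum_{u\in\Fqn}\chi_{2}(au^{2}+bu+c) - \chi_{2}(c),
\end{equation*}
which is evaluated by Lemma~\ref{item25}. The first sum $\sum_{u}\chi_{2}(f(u))$ remains in the scope of Lemma~\ref{item10}, now with discriminant $c^{2}(b^{2}-4ac)$, and splits naturally into the three subcases of the theorem: $c\neq 0$ and $b^{2}-4ac\neq 0$ (elliptic, producing $\omega_{1}=\omega_{q}(a,b,c,0)$); $b^{2}-4ac=0$ (double root at $-b/(2a)$, producing $\alpha_{1}=-b/2$); and $c=0$ with $b\neq 0$ (double root at $0$, producing $\alpha_{1}=b$). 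The main obstacle is the bookkeeping in the $d=0$, $b^{2}-4ac=0$ subcase, where a raw count over $u\in\Fqn\setminus\{0,-b/(2a)\}$ yields a $(q^{n}-2)\chi_{2}(a)$ contribution that must be repackaged as $(q^{n}-1)\chi_{2}(a)-\chi_{2}(c)$; this relies on the identity $\chi_{2}(c)=\chi_{2}(a)$, which holds because $4ac=b^{2}$ is a nonzero square.
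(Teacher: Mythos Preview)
Your argument is correct and essentially identical to the paper's. Both proofs start from the same decomposition
\[
N_n(\Ca)=1+\sum_{u\in\Fqn}\bigl[1+\chi_2(u)\bigr]\bigl[1+\chi_2(au^3+bu^2+cu+d)\bigr]
= q^n+1+\sum_{u}\chi_2(f(u))+\sum_{u}\chi_2(u)\chi_2(f(u)),
\]
then invoke Lemma~\ref{item10} and Lemma~\ref{item15} when $d\neq 0$, and Lemma~\ref{item10} together with Lemma~\ref{item25} when $d=0$; your extra remarks on $\Delta'(d,c,b,a)=\Delta'(a,b,c,d)$ and on the identity $\chi_2(c)=\chi_2(a)$ in the degenerate subcase simply make explicit details the paper leaves implicit.
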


	\begin{proof}
		The number of rational points on $\Ca$ is given by
		$$\begin{aligned}
		N_n(\Ca)&=1+\sum\limits_{x\in\Fqn}\left[1+\chi_2(ax^3+bx^2+cx+d)\right]\big[1+\chi_2(x)\big]\\
		&=q^n+1+\sum\limits_{x\in\Fqn}\left[\chi_2(ax^3+bx^2+cx+d)+\chi_2(x)+\chi_2(ax^3+bx^2+cx+d)\chi_2(x)\right].\\
		\end{aligned}$$
		The result follows by Lemma \ref{item10}, Lemma \ref{item11} and Lemma \ref{item15} in the case $d\neq0$. Otherwise,
		$$\begin{aligned}
		N_n(\Ca)&=q^n+1+\sum\limits_{x\in\Fqn}\chi_2(ax^3+bx^2+cx)+\sum\limits_{x\in\Fqn^*}\chi_2(ax^3+bx^2+cx)\chi_2(\tfrac{1}{x})\\
		&=q^n+1+\sum\limits_{x\in\Fqn}\chi_2(ax^3+bx^2+cx)+\sum\limits_{x\in\Fqn^*}\chi_2(ax^2+bx+c).\\
		\end{aligned}$$
		Hence, the result follows by Lemma \ref{item25} and Lemma \ref{item10}.
		$\hfill\qed$
	\end{proof}

	\begin{example}
		Let $\Ja: y^2=2x^6+1$ be a curve over $\F_{29}$. In order to calculate $\omega_{29}(2,0,0,1)$ and $\omega_{29}(1,0,0,2)$, we use Lemma \ref{item12} as in Example \ref{item39}. We note that
		$$\omega_{29}(2,0,0,1)+\overline{\omega_{29}(2,0,0,1)}\equiv \sum_{l=5}^{7} \binom{14}{2l}\binom{2l}{14-l}2^{14-l}0^{3l-14}1^{14-2l}\equiv 0\pmod{29}$$
		and
		$$\omega_{29}(1,0,0,2)+\overline{\omega_{29}(1,0,0,2)}\equiv \sum_{l=5}^{7} \binom{14}{2l}\binom{2l}{14-l}1^{14-l}0^{3l-14}2^{14-2l}\equiv 0\pmod{29},$$
		then
		$\omega_{29}(2,0,0,1)=\omega_{29}(1,0,0,2)=i\sqrt{29} $. Since $2$ is not square residue in $\F_{29}$ and $\Delta'(2,0,0,1)=1$, Theorem \ref{item17} states that the number of rational points on $\Ja$ over $\F_{29^n}$ is given by
		$$N_n(\Ja)=\begin{cases}
		29^n+4\cdot 29^{2k+1},&\text{ if }n=4k+2\text{ for an integer }k;\\
		29^n-4\cdot 29^{2k},&\text{ if }n=4k\text{ for an integer }k;\\
		29^n+2,&\text{ if }n\equiv 1\pmod{2}.\\
		\end{cases}$$
	\end{example}

	\begin{theorem}\label{item30}
		For $a,b,c\in\Fq$, with $a\neq0$, let $\Ca: y^2=ax^4+bx^2+c$ be a curve over $\overline{\Fq}$. The number of rational points on $\Ca$ over $\Fqn$ is given by
		$$N_n(\Ca)=\begin{cases}
		q^n+1-\omega_q(a,b,c,0)^n-\overline{\omega_q(a,b,c,0)}^{\, n}-\chi_2(a),&\text{ if }b^2-4ac\neq 0\text{, with }c\neq0;\\
		q^n+1-\chi_2(b)-\chi_2(a),&\text{ if }b^2-4ac\neq 0\text{, with }c=0;\\
		q^n+1-\chi_2(-b/2)+(q^n-1)\chi_2(a),&\text{ if }b^2-4ac= 0\text{, with }c\neq0;\\
		q^n+1+(q^n-1)\chi_2(a),&\text{ if }b^2-4ac= 0\text{, with }c=0.\\
		\end{cases}$$
		
	\end{theorem}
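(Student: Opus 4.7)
The plan is to introduce the substitution $t = x^2$. Using the convention $\chi_2(0)=0$, the number of $x \in \Fqn$ with $x^2 = t$ equals $1 + \chi_2(t)$ for every $t \in \Fqn$, and the number of $y$ with $y^2 = s$ equals $1 + \chi_2(s)$ for every $s \in \Fqn$. Counting also the point at infinity, I would write
\[
N_n(\Ca) \;=\; 1 + \sum_{t \in \Fqn} \bigl(1 + \chi_2(t)\bigr)\bigl(1 + \chi_2(at^2+bt+c)\bigr) \;=\; q^n + 1 + S_0 + S_1 + S_2,
\]
where $S_0 := \sum_t \chi_2(t)$, $S_1 := \sum_t \chi_2(at^2+bt+c)$, and $S_2 := \sum_t \chi_2(t)\chi_2(at^2+bt+c) = \sum_t \chi_2\bigl(at^3+bt^2+ct\bigr)$; the last identity uses $\chi_2(t)\chi_2(h(t)) = \chi_2(t\cdot h(t))$ for $t \neq 0$ together with $\chi_2(0) = 0$.

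The sum $S_0$ vanishes by Lemma \ref{item11}, and $S_1$ is handled directly by Lemma \ref{item25}: it equals $-\chi_2(a)$ when $b^2 - 4ac \neq 0$ and $(q^n-1)\chi_2(a)$ when $b^2 - 4ac = 0$. The crux is $S_2$, which I would evaluate using Lemma \ref{item10} applied to the cubic $g(t) := at^3 + bt^2 + ct$. Since $g(t) = t(at^2+bt+c)$, the multiplicity pattern of the roots of $g$ is controlled precisely by whether $c = 0$ and whether $b^2 - 4ac = 0$, which is exactly the case split in the statement.

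Concretely: if $c \neq 0$ and $b^2 - 4ac \neq 0$, then $g$ has three distinct roots, so $\Delta'(a,b,c,0) = 1$ and Lemma \ref{item10} gives $S_2 = -\omega_q(a,b,c,0)^n - \overline{\omega_q(a,b,c,0)}^{\,n}$. If $c = 0$ with $b \neq 0$, then $g(t) = t^2(at+b)$ has double root $0$ and simple root $-b/a$, so the formula of Lemma \ref{item10} gives $S_2 = -\chi_2\bigl(a \cdot 0 - a\cdot(-b/a)\bigr) = -\chi_2(b)$. If $b^2 - 4ac = 0$ with $c \neq 0$, then $g(t) = a t\bigl(t + b/(2a)\bigr)^2$ has double root $-b/(2a)$ and simple root $0$, giving $S_2 = -\chi_2(-b/2)$. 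Finally, if $b^2 - 4ac = 0$ and $c = 0$, then $b = 0$ as well and $g(t) = at^3$ has a triple root, giving $S_2 = 0$. Collecting $q^n + 1 + S_1 + S_2$ in each of the four regimes reproduces the four formulas in the statement.

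The only real bookkeeping difficulty is keeping the labels of simple versus double root straight when reading off $\alpha(a,b,c,0)$ from Lemma \ref{item10} in the two $\Delta'=2$ subcases; there is no genuine analytic obstacle, since every character sum that appears is already packaged in Lemmas \ref{item10}, \ref{item11} and \ref{item25}.
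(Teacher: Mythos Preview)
Your argument is correct and is essentially identical to the paper's proof: the paper likewise writes $N_n(\Ca)=q^n+1+\sum_{x\in\Fqn}\bigl[\chi_2(ax^2+bx+c)+\chi_2(x)+\chi_2(ax^3+bx^2+cx)\bigr]$ and then invokes Lemmas \ref{item25}, \ref{item11}, and \ref{item10} respectively for the three sums. Your detailed case analysis of $\Delta'(a,b,c,0)$ is exactly the unpacking of Lemma \ref{item10} that the paper leaves implicit in ``the result follows.''
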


	\begin{proof}
		As in the last theorem, 
		$$\begin{aligned}
		N_n(\Ca)&=q^n+1+\sum\limits_{x\in\Fqn}\left[\chi_2(ax^2+bx+c)+\chi_2(x)+\chi_2(ax^3+bx^2+cx)\right].\\
		\end{aligned}$$
		By Lemma \ref{item25}, Lemma \ref{item11} and Lemma \ref{item10}, the result follows.$\hfill\qed$
	\end{proof}
	
	\begin{lemma}\label{item34} Let $i$ be a divisor of $q^n-1$ and $A,B,C,a,b,c\in\Fq$, with $A\neq0$ and $a\neq0$. If $\alpha_1,\alpha_2\in\F_{q^2}$ are the roots of the polynomial $f(x)=ax^2+bx+c$ and the polynomials $f(x)$ and $g(x)=Ax^2+Bx+C$ has no common roots, then
		$$\sum\limits_{\substack{x\in\Fqn\\ x\not\in\{\alpha_1,\alpha_2\}}}\mkern-15mu\left[1\!+\!\chi_i\left(\tfrac{Ax^2+Bx+C}{ax^2+bx+c}\right)\!+\dots+\chi_i^{i-1}\left(\tfrac{Ax^2+Bx+C}{ax^2+bx+c}\right)\right]=
		N_n(\Ca)-\delta-\left[1+\dots+\chi_i^{i-1}\left(\tfrac{A}{a}\right)\right],$$
		where $\Ca$ is a curve given by the equation $z^2=(B-by^i)^2-4(A-ay^i)(C-cy^i)$ and $\delta$ is a constant referent to points at the infinity given by
		$$\delta:=\begin{cases}
		1+\chi_2(b^2-4ac),&\text{ if }i=1;\\
		1+\chi_2(4Ac+4Ca-2Bb),&\text{ if }i=2\text{ and }b^2-4ac=0;\\
		1,&\text{ otherwise}.\\
		\end{cases}$$
	\end{lemma}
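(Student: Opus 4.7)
The plan is to interpret the left-hand side as a point count and evaluate it by swapping summation order. Set $S$ equal to the LHS. Using the orthogonality identity
$$\sum_{j=0}^{i-1} \chi_i^j(u) = |\{y \in \Fqn : y^i = u\}| \qquad (u \in \Fqn),$$
which holds under the extension conventions of the paper (trivial character equal to $1$ at $0$ and every non-trivial character equal to $0$ at $0$), I recast $S$ as the number of pairs $(x, y) \in \Fqn^2$ with $x \notin \{\alpha_1, \alpha_2\}$ and $y^i f(x) = g(x)$, where $f(x) = ax^2 + bx + c$ and $g(x) = Ax^2 + Bx + C$. Since $f$ and $g$ share no root, any $x$ with $f(x) = 0$ automatically satisfies $g(x) \neq 0$ and thus fails $y^i f(x) = g(x)$ for every $y$. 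This makes the restriction $x \notin \{\alpha_1, \alpha_2\}$ automatic and gives
$$S = \bigl|\bigl\{(x, y) \in \Fqn^2 : \phi_y(x) = 0\bigr\}\bigr|, \qquad \phi_y(x) := (A - ay^i)x^2 + (B - by^i)x + (C - cy^i).$$

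Next I fix $y$ and count the solutions $x$ of $\phi_y(x) = 0$. When $y^i \neq A/a$ the polynomial $\phi_y$ is genuinely quadratic with discriminant
$$\Phi(y) := (B - by^i)^2 - 4(A - ay^i)(C - cy^i) = (b^2 - 4ac)\, y^{2i} + (4Ac + 4aC - 2Bb)\, y^i + (B^2 - 4AC),$$
so the number of $\Fqn$-solutions is $1 + \chi_2(\Phi(y))$ with the convention $\chi_2(0) = 0$. When $y^i = A/a$, a short case split on whether $Ba = bA$ shows that in each sub-case the true count differs from $1 + \chi_2(\Phi(y))$ by exactly $-1$: if $Ba \neq bA$ then $\phi_y$ is nondegenerate linear with one solution while $\Phi(y) = ((Ba - bA)/a)^2$ is a nonzero square; if $Ba = bA$ then $\phi_y$ collapses to the constant $C - cy^i$, which is nonzero (otherwise $g = (A/a)f$ would share every root of $f$, violating the no-common-root hypothesis), giving no solutions while $\Phi(y) = 0$. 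The number of $y \in \Fqn$ with $y^i = A/a$ is exactly $T_i(A/a) := 1 + \chi_i(A/a) + \cdots + \chi_i^{i-1}(A/a)$, so summing over $y$ gives
$$S = q^n + \sum_{y \in \Fqn} \chi_2(\Phi(y)) - T_i(A/a).$$

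The quantity $q^n + \sum_y \chi_2(\Phi(y))$ is precisely the number of $\Fqn$-rational affine points of $\Ca \colon z^2 = \Phi(y)$, so the remaining task is to identify $\delta$ with the number of $\Fqn$-rational points at infinity of the projective closure of $\Ca$ in $\mathbb{P}^2$; this case analysis is the main obstacle. For $i = 1$, $F(y, z) = z^2 - \Phi(y)$ has degree $2$ and a direct homogenization shows the number of points at infinity is $1 + \chi_2(b^2 - 4ac)$ (valid also when $b^2 - 4ac = 0$, where the single point $(1{:}0{:}0)$ survives and $\chi_2(0) = 0$). For $i \geq 2$ with $b^2 - 4ac \neq 0$, homogenizing $F$ to degree $2i$ forces $y = 0$ at infinity, leaving the single point $(0{:}1{:}0)$ and $\delta = 1$. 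For $i = 2$ with $b^2 - 4ac = 0$, $\Phi$ has degree $2$ in $y$ with leading coefficient $4Ac + 4aC - 2Bb$, which is nonzero (otherwise $f$ and $g$ would share the double root of $f$), so the closure is again a conic contributing $1 + \chi_2(4Ac + 4aC - 2Bb)$ points at infinity. In all remaining cases a similar leading-coefficient analysis yields $\delta = 1$. Combining everything produces $S = N_n(\Ca) - \delta - T_i(A/a)$, which is the asserted identity.
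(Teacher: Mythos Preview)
Your argument is correct and follows essentially the same route as the paper: both interpret the left-hand side as the affine point count of $y^i f(x)=g(x)$, fix $y$, solve the resulting quadratic in $x$, and match the count to the curve $z^2=\Phi(y)$ after handling the degenerate fibre $y^i=A/a$. Your write-up is in fact more careful than the paper's (which only sketches the $y^i=A/a$ case and does not explicitly justify the value of $\delta$); in particular your verification that $4Ac+4Ca-2Bb\neq 0$ whenever $b^2-4ac=0$ via $g(-b/2a)=(4Ac+4Ca-2Bb)/(4a)$, and your case analysis of the points at infinity, fill in details the paper leaves implicit.
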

	
	\begin{proof} We observe that the summation
		$$\sum\limits_{\substack{x\in\Fqn\\ x\not\in\{\alpha_1,\alpha_2\}}}\mkern-15mu\left[1\!+\!\chi_i\left(\tfrac{Ax^2+Bx+C}{ax^2+bx+c}\right)\!+\dots+\chi_i^{i-1}\left(\tfrac{Ax^2+Bx+C}{ax^2+bx+c}\right)\right]$$
		 count the number of rational points on the curve $\Ca':y^i(ax^2+bx+c)=Ax^2+Bx+C$ over $\Fqn$. Let $y_0^i\in\Fqn$, if there exists $x_0\in\Fqn$ such that $(x_0,y_0)$ is on the curve, then it is given by one of the following
		$$\frac{by^i-B\pm\sqrt{(B-by^i)^2-4(A-ay^i)(C-cy^i)}}{2(A-ay^i)}.$$
		Hence, fixing $y_0^i\in\Fqn$, there exists $x_0\in\Fqn$ such that $(x_0,y_0)$ is on the curve $\Ca'$ only if $(B-by_0^i)^2-4(A-ay_0^i)(C-cy_0^i)$ is a square in $\Fqn$ or if $y_0^i=Aa^{-1}$. Let $\Ca$ be the curve given by equation $z^2=(B-by^i)^2-4(A-ay^i)(C-cy^i)$. In the case where $Aa^{-1}$ is a $i$-th power in $\Fqn$ and $Aa^{-1}b\neq B$, there are $i$ rational points of the form $\left(\tfrac{cA-Ca}{Ba-bA},\gamma^i\right)$ on the curve $\Ca'$, where $\gamma$ is a $i$-th primitive root of $Aa^{-1}$.  Since there are $2i$ rational points in $\Ca$ where $y^i=Aa^{-1}$ (namely $\left(\pm\tfrac{Ba-bA}{a} ,\gamma^i\right)$), the result is proved in this case. In the case where $Aa^{-1}b= B$, there are no rational points with $y^i=Aa^{-1}$ on the curve $\Ca'$ and only $i$ on $\Ca$, then, in the same way, the result follows. $\hfill\qed$
	
	\end{proof}

	\begin{remark}\label{item37}
		Let $b,c,A,B,C\in\Fq$, with $A\neq 0$, $b\neq0$ and $A(\tfrac{-c}{b})^2+B(\tfrac{-c}{b})+C\neq0$. For $i$ a divisor of $q^n-1$, in the same way of the proof of Lemma \ref{item34}, we have
		$$\sum\limits_{\substack{x\in\Fqn\\ x\neq -c/b}}\mkern-15mu\left[1\!+\!\chi_i\left(\tfrac{Ax^2+Bx+C}{bx+c}\right)\!+\dots+\chi_i^{i-1}\left(\tfrac{Ax^2+Bx+C}{bx+c}\right)\right]=\begin{cases}
		N_n(\Ca)-2,&\text{ if }i=1;\\
		N_n(\Ca)-1,&\text{ otherwise. }\\
		\end{cases}
		$$
		where $\Ca$ is a curve given by the equation $z^2=(B-by^i)^2-4A(C-cy^i).$
	\end{remark}

	\begin{comment}
	In the next results, we work on the number of points on curve involving the polynomials $Ax^2+Bx+C$ and $ax^2+bx+c$.. The cases when $b^2-4ac=0$ or $B^2-4AC=0$ are not interesting, since $\Ca$ can be reduced to a curve of degree $\leq 2$.
	\end{comment}
	
	\begin{comment}
	\begin{theorem}
		Let $A,B,C,a,b,c$ be elements in $\Fq$ and $\alpha_1,\alpha_2\in\F_{q^2}$ that satisfy the hypothesis of the Lemma \ref{item34}. Let $\Ca_1: y^i=(Ax^2+Bx+C)(ax^2+bx+c)^{i-1}$ and $\Ca_2: y^i(ax^2+bx+c)=Ax^2+Bx+C$ be two curves over $\overline{\Fq}$. The number of rational points on the curves $\Ca_1$ and $\Ca_2$ over $\Fqn$ satisfies
	%	$$N_n(\Ca_1)-\gamma+\sum\limits_{j=1}^{i-1}\chi_i^j\left(\tfrac{A}{a}\right)=N_n(\Ca)-\delta=N_n(\Ca_2)-1+\sum\limits_{j=1}^{i-1}\chi_i^j\left(\tfrac{A}{a}\right),$$
		$$N_n(\Ca_1)=N_n(\Ca)-\sum\limits_{j=1}^{i-1}\chi_i^j\left(\tfrac{A}{a}\right)-\delta+\gamma$$
		and
		$$N_n(\Ca_2)=N_n(\Ca)+1-\sum\limits_{j=1}^{i-1}\chi_i^j\left(\tfrac{A}{a}\right)-\delta,$$
		where $\Ca$ is the curve over $\overline{\Fq}$ given by the equation $z^2=(B-by^i)^2-4(A-ay^i)(C-cy^i)$, $\gamma:=\um_{\{\alpha_1\in\Fqn\}}+\um_{\{\alpha_1\in\Fqn,\alpha_1\neq\alpha_2\}}$ and $\delta$ is given by
		$$\delta:=\begin{cases}
		1+\chi_2(b^2-4ac),&\text{ if }i=1;\\
		1+\chi_2(4Ac+4Ca-2Bb),&\text{ if }i=2\text{ and }b^2-4ac=0;\\
		1,&\text{ otherwise}.\\
		\end{cases}$$
	\end{theorem}

	\end{comment}

	\begin{proof}(proof of \ref{item36})
		By Lemma \ref{item34}, 
		$$\begin{aligned}
		N_n(\Ca_1)&=1+\sum\limits_{x\in\Fqn}\left[1+\cdots+\chi_i^{i-1}\big((Ax^2+Bx+C)(ax^2+bx+c)^{i-1}\big)\right]\\
		&=1+\um_{\{\alpha_1\in\Fqn\}}+\um_{\{\alpha_1\in\Fqn,\alpha_1\neq\alpha_2\}}+\mkern-8mu\sum\limits_{x\in\Fqn\backslash\{\alpha_1,\alpha_2\}}\!\left[1+\cdots+\chi_i^{i-1}\!\!\left(\frac{Ax^2+Bx+C}{ax^2+bx+c}\right)\right]\\
		&=1+\um_{\{\alpha_1\in\Fqn\}}+\um_{\{\alpha_1\in\Fqn,\alpha_1\neq\alpha_2\}}+N_n(\Ca)-\left[1+\chi_i\left(\tfrac{A}{a}\right)+\cdots+\chi_i^{i-1}\left(\tfrac{A}{a}\right)\right]-\delta,\\
		\end{aligned}$$
		where $\Ca$ is the curve given by the equation $z^2=(B-by^i)^2-4(A-ay^i)(C-cy^i)$. In the same way,
		$$\begin{aligned}N_n(\Ca_2)&=2+\sum\limits_{x\in\Fqn\backslash\{\alpha_1,\alpha_2\}}\left[1+\chi_i\left(\frac{Ax^2+Bx+C}{ax^2+bx+c}\right)+\cdots+\chi_i^{i-1}\left(\frac{Ax^2+Bx+C}{ax^2+bx+c}\right)\right]\\
		&=2+N_n(\Ca)-\left[1+\chi_i\left(\tfrac{A}{a}\right)+\cdots+\chi_i^{i-1}\left(\tfrac{A}{a}\right)\right]-\delta.\\
		\end{aligned}$$
		$\hfill\qed$
	\end{proof}

	We have the necessary tools to calculate $N_n(\Ca)$ in some cases, as we will see in the following results.

	\begin{corollary}\label{item31}
		Let $A,B,C,a,b,c$ be elements in $\Fq$ that satisfy the hypothesis of the Lemma \ref{item34}. Assuming $b^2-4ac\neq0$ and $B^2-4AC\neq 0$, the number of rational points on the curve $\Ca: y^2(ax^2+bx+c)=Ax^2+Bx+C$ over $\Fqn$ is given by
		$$N_n(\Ca)=\begin{cases}
		q^n+1-\omega_q(a',b',c',0)^n-\overline{\omega_q(a',b',c',0)}^{\, n}-\chi_2(a')-\chi_2\left(\tfrac{A}{a}\right),&\mkern-10mu\text{ if }\Delta\neq 0;\\
		q^n+1-\chi_2(\alpha)+(q^n-1)\chi_2(a')-\chi_2\left(\tfrac{A}{a}\right),&\mkern-10mu\text{ if }\Delta= 0,\\
		\end{cases}$$
		where $a':=b^2-4ac,\ b':=4Ac+4Ca-2Bb,\ c':=B^2-4AC$, $\Delta:=\Delta(a',b',c',0)$ and $\alpha=\alpha(a',b',c',0)$.
	\end{corollary}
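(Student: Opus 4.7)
The plan is to recognize $\Ca$ as the curve $\Ca_2$ appearing in Theorem \ref{item36} with $i=2$, and then reduce the auxiliary curve produced there to one of the quartics-in-$y$ handled by Theorem \ref{item30}.

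First I would apply Theorem \ref{item36} with $i=2$ and polynomials $Ax^2+Bx+C$ and $ax^2+bx+c$. Since $b^2-4ac\neq 0$, we fall into the ``otherwise'' branch of $\delta$, so $\delta=1$. The theorem then collapses to
$$N_n(\Ca)=N_n(\Ca')+1-\chi_2\!\left(\tfrac{A}{a}\right)-1=N_n(\Ca')-\chi_2\!\left(\tfrac{A}{a}\right),$$
where $\Ca'$ is the curve $z^2=(B-by^2)^2-4(A-ay^2)(C-cy^2)$.

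Next I would expand the right-hand side as a polynomial in $y$:
$$(B-by^2)^2-4(A-ay^2)(C-cy^2)=(b^2-4ac)\,y^4+(4Ac+4aC-2Bb)\,y^2+(B^2-4AC)=a'y^4+b'y^2+c'.$$
Thus $\Ca'$ has exactly the form handled by Theorem \ref{item30}. The hypothesis $B^2-4AC\neq 0$ gives $c'\neq 0$, so only the first and third cases of that theorem are relevant. Moreover, because $c'\neq 0$, zero is not a root of the quadratic $a'x^2+b'x+c'$, hence $\Delta(a',b',c',0)=0$ iff this quadratic has a double root, iff $(b')^2-4a'c'=0$. This matches the dichotomy in the statement.

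In the case $\Delta\neq 0$, the first case of Theorem \ref{item30} substituted into the identity above immediately produces the first branch of the corollary. In the case $\Delta=0$, the quadratic $a'x^2+b'x+c'$ has a double root $r=-b'/(2a')$, so $a'x^3+b'x^2+c'x=a'\,x\,(x-r)^2$; comparing with the definition of $\alpha$ (with $\alpha_1=r$ the double root and $\alpha_2=0$) gives $\alpha=a'(r-0)=-b'/2$, hence $\chi_2(-b'/2)=\chi_2(\alpha)$. Plugging the third case of Theorem \ref{item30} into $N_n(\Ca)=N_n(\Ca')-\chi_2(A/a)$ then yields the second branch. The only non-routine step is this bookkeeping for $\alpha$ in the degenerate case --- the rest is a straightforward substitution of the two previous theorems.
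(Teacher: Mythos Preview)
Your proposal is correct and follows exactly the approach the paper intends: the paper's own proof is the single line ``It follows from Theorem \ref{item36} and Theorem \ref{item30}'', and you have spelled out precisely that combination, including the bookkeeping that identifies $\chi_2(-b'/2)$ with $\chi_2(\alpha)$ in the degenerate case.
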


	\begin{proof} It follows from Theorem \ref{item36} and Theorem \ref{item30}.
		$\hfill\qed$
	\end{proof}
	
	\begin{corollary}\label{item18}
		Let $A,B,C,a,b,c$ be elements in $\Fq$ that satisfy the hypothesis of the Lemma \ref{item34}. Assuming $b^2-4ac\neq0$ and $B^2-4AC\neq 0$, the number of rational points on the curve $\Ca: y^2=(ax^2+bx+c)(Ax^2+Bx+C)$ over $\Fqn$ is given by
		$$N_n(\Ca)\!=\!\begin{cases}
		q^n\!+2\cdot\um_{\{\alpha_1\in\Fqn\}}\!-\omega_q(a',b',c',0)^n\!-\overline{\omega_q(a',b',c',0)}^{\, n}\!\!-\chi_2(a')-\chi_2\left(\tfrac{A}{a}\right),&\mkern-17mu\text{ if }\Delta\neq 0;\\
		q^n\!+2\cdot\um_{\{\alpha_1\in\Fqn\}}-\chi_2(\alpha)+(q^n-1)\chi_2(a'),&\mkern-17mu\text{ if }\Delta= 0,\\
		\end{cases}$$
		where $a':=b^2-4ac,\ b':=4Ac+4Ca-2Bb,\ c':=B^2-4AC$, $\Delta:=\Delta(a',b',c',0)$ and $\alpha=\alpha(a',b',c',0)$.
	\end{corollary}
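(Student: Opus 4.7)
The plan is to apply Theorem \ref{item36} with $i=2$ to $\Ca = \Ca_1 : y^2 = (Ax^2+Bx+C)(ax^2+bx+c)$, reducing the count to that of the auxiliary curve $\Ca_0 : z^2 = (B-by^2)^2 - 4(A-ay^2)(C-cy^2)$, and then apply Theorem \ref{item30} to $\Ca_0$ after a short algebraic expansion.

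First I would evaluate the correction terms that Theorem \ref{item36} contributes when $i=2$. Since $b^2-4ac\neq 0$, the roots $\alpha_1,\alpha_2$ of $f(x)=ax^2+bx+c$ are distinct, hence $\um_{\{\alpha_1\in\Fqn,\,\alpha_1\neq\alpha_2\}}=\um_{\{\alpha_1\in\Fqn\}}$ and $\gamma=2\,\um_{\{\alpha_1\in\Fqn\}}$. For $\delta$, the hypothesis $b^2-4ac\neq 0$ places us in the \emph{otherwise} branch, so $\delta=1$. The sum $\sum_{j=1}^{1}\chi_2^j(A/a)$ is just $\chi_2(A/a)$. Thus Theorem \ref{item36} gives
$$N_n(\Ca) \;=\; N_n(\Ca_0) - \chi_2\!\bigl(\tfrac{A}{a}\bigr) - 1 + 2\,\um_{\{\alpha_1\in\Fqn\}}.$$

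Next I would expand
$$(B-by^2)^2-4(A-ay^2)(C-cy^2) \;=\; (b^2-4ac)y^4+(4Ac+4aC-2Bb)y^2+(B^2-4AC) \;=\; a'y^4+b'y^2+c',$$
which places $\Ca_0$ in the form handled by Theorem \ref{item30}. By hypothesis $a' = b^2-4ac\neq 0$ and $c' = B^2-4AC\neq 0$, so only the first and third cases of that theorem can occur. Noting that $\Delta(a',b',c',0) = (c')^2\bigl((b')^2-4a'c'\bigr)$ and $c'\neq 0$, the dichotomy $\Delta\neq 0$ versus $\Delta=0$ coincides exactly with $(b')^2-4a'c'\neq 0$ versus $(b')^2-4a'c'=0$, matching Theorem \ref{item30}'s case split precisely.

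In the case $\Delta\neq 0$, Theorem \ref{item30} yields $N_n(\Ca_0) = q^n+1-\omega_q(a',b',c',0)^n-\overline{\omega_q(a',b',c',0)}^{\,n}-\chi_2(a')$, and substitution produces the first line of the stated formula. In the case $\Delta=0$, the relevant case of Theorem \ref{item30} gives $N_n(\Ca_0) = q^n+1-\chi_2(-b'/2)+(q^n-1)\chi_2(a')$; one then identifies $\alpha(a',b',c',0) = -b'/2$, since the double root of $a'y^2+b'y+c'$ is precisely the repeated root of the cubic $y(a'y^2+b'y+c')$, recovering the second line. The one piece of real algebra is the expansion identifying $a',b',c'$; it can be cross-checked via the identity $(b')^2-4a'c' = 16\,\mathrm{Res}(f,g)$, which also confirms that under the hypothesis of Lemma \ref{item34} (no common root of $f$ and $g$) one automatically has $\Delta\neq 0$, making the second case a formal record. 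Everything else is bookkeeping between Theorems \ref{item36} and \ref{item30}.
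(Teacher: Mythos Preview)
Your proposal is correct and follows exactly the paper's approach, which simply cites Theorem~\ref{item36} and Theorem~\ref{item30}; you have merely filled in the bookkeeping details that the paper omits. Your closing remark that $(b')^2-4a'c'=16\,\mathrm{Res}(f,g)$, so that the hypothesis of Lemma~\ref{item34} forces $\Delta\neq 0$ and the second case is vacuous, is a nice additional observation not made explicit in the paper.
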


\begin{proof} It follows from Theorem \ref{item36} and Theorem \ref{item30}.
$\hfill\qed$
\end{proof}

The result in Corollary \ref{item18} generalize the sums of quadratic characters studied by Williams in \cite{williams1979evaluation}, where the author calculate the sum
$$\sum\limits_{x\in\Fp} \chi_2\left((ax^2+bx+c)(Ax^2+Bx+C)\right)$$
	over a prime field $\Fp$.
	\begin{example}
		Let $\Ja: y^2=(x^2+3x+2)(x^2-2x-5)$ be a curve over $\F_{67}$. In order to calculate $\omega_{67}(1,0,24,0)$, we use Lemma \ref{item12} as in Example \ref{item39}. We note that
		$$\omega_{67}(1,0,24,0)+\overline{\omega_{67}(1,0,24,0)}\equiv \sum_{l=11}^{16} \binom{33}{2l}\binom{2l}{33-l}1^{33-l}(24)^{3l-33}0^{33-2l}\equiv 0\pmod{67},$$
		then
		$\omega_{67}(1,0,24,0)=i\sqrt{67} $. Since $\Delta(1,0,24,0)=-55296\neq 0$, Corollary \ref{item18} states that the number of rational points on $\Ja$ over $\F_{67^n}$ is given by
		$$N_n(\Ja)=\begin{cases}
		67^n-2\left(i\sqrt{67}\right)^n,&\text{ if }n\text{ is even};\\
		67^n,&\text{ if }n\text{ is odd}.\\
		\end{cases}$$
	\end{example}
	
	In Theorem \ref{item16} and Corollary \ref{item18}, we give the number of rational points on $y^2=f(x)$, where $f(x)$ is a polynomial of degree $4$ that is reducible over $\Fq$. In Theorem \ref{item30} we give this number in the case where $f(x)=ax^4+bx^2+c$. Hence, we presented the number of rational points on most hyperelliptic curves of degree $4$ over finite fields of odd characteristic. The missing case is the curve given by the equation $y^2=ax^4+bx^2+cx+d$, with $c\neq 0$, where $f(x)=ax^4+bx^2+cx+d$ is an irreducible polynomial over $\Fq$. In fact, we do not know how to calculate the number of rational points in this specific case.

	\section{Rational Points on Curves of the Form $y^3=f(x)$}\label{sec5}
	In this section, we assume $q\equiv 1\pmod{3}$. The case $q\not\equiv 1\pmod{3}$ is not interesting, since the function $y\mapsto y^3$ permutes the elements of $\Fq$.
	\begin{theorem}\label{item22}
		Let $a,A,B,C\in\Fq$ with $A\neq 0$. Let $\Ca_1: y^3=(x+a)(Ax^2+Bx+C)$ and $\Ca_2: y^3=(x+a)^2(Ax^2+Bx+C)^2$ be curves over $\overline{\Fq}$. If $Aa^2-Ba+C\neq 0$, then the number of rational points on $\Ca_1$ and $\Ca_2$ over $\Fqn$ satisfies
		% $$+2=N_n(\Ca_1)=q^n\!+2-\omega_q(a',0,c',0)^n\!-\overline{\omega_q(a',0,c',0)}^{\, n}-\chi_3\left(A\right)-\chi_3^2\left(A\right),$$
		$$N_n(\Ca_2)+\chi_3\left(A\right)+\chi_3^2\left(A\right)=N_n(\Ca_1)=q^n+1-\omega_q(a',0,0,c')^n-\overline{\omega_q(a',0,0,c')}^{\, n},$$
		where $a':=4Aa^2+4C-4Ba$ and $c':=B^2-4AC$.
	\end{theorem}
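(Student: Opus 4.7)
My plan is to count the affine parts of $\Ca_1$ and $\Ca_2$ via cubic character sums and to identify the common affine count with the point count on the elliptic curve $E:v^2=a'u^3+c'$. Since $q\equiv 1\pmod{3}$, the number of $y\in\Fqn$ with $y^3=c$ equals $1+\chi_3(c)+\chi_3^2(c)$, under the convention $\chi_3(0)=0$. Writing $f(x)=(x+a)(Ax^2+Bx+C)$, this gives
\[
\#\{(x,y)\in\Fqn^2:y^3=f(x)\}=q^n+S+\overline{S},
\]
where $S=\sum_{x\in\Fqn}\chi_3(f(x))$. The identities $\chi_3(c^2)=\chi_3^2(c)$ and $\chi_3^2(c^2)=\chi_3(c)$ (both trivial when $c=0$) yield the same expression for the number of affine points of $\Ca_2$.

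Next I would count the projective points at infinity. The homogenization of $\Ca_1$ is a plane cubic, and setting $Z=0$ yields $Y^3=AX^3$, contributing $1+\chi_3(A)+\chi_3^2(A)$ points over $\Fqn$. The homogenization of $\Ca_2$ is a plane sextic, and setting $Z=0$ forces $X=0$, leaving only the point $(0:1:0)$. Subtracting these contributions yields the first equality $N_n(\Ca_1)-N_n(\Ca_2)=\chi_3(A)+\chi_3^2(A)$ in the statement.

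For the second equality I would introduce the rational substitution $u=y/(x+a)$, defined whenever $x\neq -a$; the equation of $\Ca_1$ then becomes the quadratic in $x$
\[
(A-u^3)x^2+(B-2au^3)x+(C-a^2u^3)=0,
\]
whose discriminant computes, after a direct expansion, to $a'u^3+c'$. Setting $v=2(A-u^3)x+(B-2au^3)$ places $(u,v)$ on $E$, and this produces a bijection between the affine points of $\Ca_1$ with $x\neq -a$ and $u^3\neq A$ and the affine points of $E$ with $u^3\neq A$; the isolated point $(-a,0)$ must be added by hand.

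For the exceptional fibre $u^3=A$ the quadratic degenerates to the linear equation $(B-2aA)x+(C-a^2A)=0$. The hypothesis $Aa^2-Ba+C\neq 0$ guarantees that these two coefficients cannot vanish simultaneously. A short case split on whether $B-2aA$ vanishes shows that in either sub-case the image of the substitution misses exactly $1+\chi_3(A)+\chi_3^2(A)$ points of $E$ lying over $u^3=A$. Combining with $|E(\Fqn)|=q^n+1-\omega^n-\overline{\omega}^n$ from Theorem~\ref{item9}, the affine count of $\Ca_1$ is $q^n-\omega^n-\overline{\omega}^n-\chi_3(A)-\chi_3^2(A)$; adding the $1+\chi_3(A)+\chi_3^2(A)$ points at infinity recovers $N_n(\Ca_1)=q^n+1-\omega^n-\overline{\omega}^n$. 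The main obstacle is the bookkeeping at $u^3=A$: when $B-2aA\neq 0$ each cube root of $A$ lifts to a unique $x$ whose image is $(u,B-2aA)\in E$, covering only half of that fibre; when $B-2aA=0$ there are no lifts at all, yet the fibre consists of the points $(u,0)$ with $u^3=A$. Verifying that both sub-cases produce the same deficit $1+\chi_3(A)+\chi_3^2(A)$ is the decisive computation.
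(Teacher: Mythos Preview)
Your argument is correct. The derivation of $N_n(\Ca_1)-N_n(\Ca_2)=\chi_3(A)+\chi_3^2(A)$ via the identity $\chi_3(c^2)=\chi_3^2(c)$ together with the count of points at infinity is exactly what the paper does.

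For the value of $N_n(\Ca_1)$ the paper takes a slightly different route: it observes that $\Ca_1$ has the same affine count as the auxiliary curve $\Ca':y^3=(x+a)^4(Ax^2+Bx+C)$ (since $(x+a)^3$ is a cube), and then applies the general Theorem~\ref{item36} with $i=3$ and $ax^2+bx+c=(x+a)^2$, which reduces everything to the elliptic curve $z^2=a'y^3+c'$. If one unpacks the proof of Theorem~\ref{item36} (via Lemma~\ref{item34}) in this special case, one recovers precisely your substitution $u=y/(x+a)$ and the same quadratic in $x$ with discriminant $a'u^3+c'$; so the two arguments are the same computation in different packaging. Your direct construction of the rational map $(x,y)\mapsto(u,v)$ is more self-contained and makes the geometry explicit, while the paper's version exhibits the result as an instance of its general template. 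Your bookkeeping at the exceptional fibre $u^3=A$ is correct in both sub-cases; note that the hypothesis $Aa^2-Ba+C\neq 0$ is used both to ensure $a'\neq 0$ and (as you observed) to rule out the simultaneous vanishing of the two linear coefficients, and it also guarantees that the inverse map never produces $x=-a$.
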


		\begin{proof} Since the number of rational points on the $\Ca_1: y^3=(x+a)(Ax^2+Bx+C)$ is equal to the number of rational points on the curve $\Ca': y^3=(x+a)^4(Ax^2+Bx+C)$, unless $3$ possible points at the infinity, the value $N_n(\Ca_1)$ follows from Theorem \ref{item36} and Lemma \ref{item10}. In order to calculate $N_n(\Ca_2)$, we note that
		$$\begin{aligned}N_n(\Ca_2)
		&=1+\sum\limits_{x\in\Fqn}\mkern-6mu\left[1+\chi_3\left((x+a)^2(Ax^2+Bx+C)^2\right)+\chi_3^2\left((x+a)^2(Ax^2+Bx+C)^2\right)\right]\\
		&=1+\sum\limits_{x\in\Fqn}\mkern-6mu\left[1+\chi_3^2\left((x+a)^4(Ax^2+Bx+C)^4\right)+\chi_3\left((x+a)(Ax^2+Bx+C)\right)\right]\\
		&=1+\sum\limits_{x\in\Fqn}\mkern-6mu\left[1+\chi_3^2\left((x+a)(Ax^2+Bx+C)\right)+\chi_3\left((x+a)(Ax^2+Bx+C)\right)\right]\\
		&=N_n(\Ca_1)-\chi_3\left(A\right)-\chi_3^2\left(A\right).\\
		\end{aligned}$$	$\hfill\qed$
		\end{proof}

	\begin{example}
		Let $\Ja_1: y^3=(x+3)(-x^2+2x+2)$ be a curve over $\F_{37}$. In order to calculate $\omega_{37}(-52,0,0,12)$, we use Lemma \ref{item12} as in Example \ref{item39}. We note that
		$$\omega_{37}(-52,0,0,12)+\overline{\omega_{37}(-52,0,0,12)}\equiv\mkern-6mu\sum_{l=6}^{9}\mkern-4mu \binom{18}{2l}\mkern-4mu\binom{2l}{18-l}\mkern-3mu(-52)^{18-l}0^{3l-18}12^{18-2l}\!\equiv 27\mkern-12mu\pmod{37},$$
		then
		$\omega_{37}(-52,0,0,12)=-5+i\sqrt{12} $.  Theorem \ref{item22} states that the number of rational points on $\Ja_1$ over $\F_{37^n}$ is given by
		$$N_n(\Ja_1)=37^n+1-\left(-5+i\sqrt{12}\right)^n-\left(-5-i\sqrt{12}\right)^n.$$
		In addition, the number of rational points on the curve $\Ja_2:y^3=(x+3)^2(-x^2+2x+2)^2$ over $\F_{37^n}$ is given by
		$$N_n(\Ja_2)=37^n-1-\left(-5+i\sqrt{12}\right)^n-\left(-5-i\sqrt{12}\right)^n.$$
	\end{example}

	\begin{theorem}\label{item40}
		Let $a,b\in\Fq^*$. The number of rational points on $\Ca: y^3=ax^3+b$ over $\Fqn$ satisfies
		$$N_n(\Ca)=q^n+1-\omega_q(a^{-1},0,0,(b/2a)^2)^n-\overline{\omega_q(a^{-1},0,0,(b/2a)^2)}^{\, n}.$$
	\end{theorem}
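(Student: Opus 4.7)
The plan is to compute $N_n(\Ca)$ as a sum involving the cubic characters $\chi_3,\chi_3^2$, carry out a parallel computation for the elliptic curve $E:\,y^2 = a^{-1}x^3 + (b/2a)^2$ using $\chi_2$, and match the two answers via a duplication-type identity for the Jacobi sums $J(\chi,\lambda) := \sum_v \chi(v)\lambda(1-v)$.

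First, I would decompose
\[
N_n(\Ca) \;=\; q^n + \bigl(1 + \chi_3(a) + \chi_3^2(a)\bigr) + \sum_{x\in\Fqn}\bigl[\chi_3(ax^3+b) + \chi_3^2(ax^3+b)\bigr],
\]
where the parenthesized term counts the points at infinity of the projective model $Y^3 = aX^3 + bZ^3$ (equivalently, the cube roots of $a$ in $\Fqn$, enumerated by orthogonality), and the affine count uses $\#\{y\in\Fqn : y^3=v\} = 1+\chi_3(v)+\chi_3^2(v)$. To evaluate $\sum_x \chi_3(ax^3+b)$, I would switch the order of summation by writing $\#\{x:ax^3=z-b\} = 1 + \chi_3^2(a)\chi_3(z-b) + \chi_3(a)\chi_3^2(z-b)$; the constant piece vanishes by orthogonality of $\chi_3$, and the substitution $z = bw$ reduces the two remaining sums to $\chi_3^2(b)\,J(\chi_3,\chi_3)$ and $J(\chi_3^2,\chi_3) = -\chi_3^2(-1) = -1$ (using $\chi_3(-1) = 1$, since $-1$ has order $2$ while $\chi_3$ has order $3$). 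Adding the analogous identity for $\chi_3^2$, the $\chi_3(a)$ and $\chi_3^2(a)$ pieces cancel against the points-at-infinity term and one obtains
\[
N_n(\Ca) \;=\; q^n + 1 + \chi_3^2(ab)\,J(\chi_3,\chi_3) + \chi_3(ab)\,\overline{J(\chi_3,\chi_3)}.
\]

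An entirely parallel expansion of $N_n(E) = q^n + 1 + \sum_x \chi_2\bigl(a^{-1}x^3 + (b/2a)^2\bigr)$, noting $\chi_2\bigl(b^2/(4a^2)\bigr) = 1$ because both $b^2$ and $4a^2$ are squares, yields
\[
N_n(E) \;=\; q^n + 1 + \chi_3^2(4ab)\,J(\chi_3,\chi_2) + \chi_3(4ab)\,\overline{J(\chi_3,\chi_2)}.
\]
Matching this to the previous display reduces the theorem to the single duplication identity $J(\chi_3,\chi_2) = \chi_3(4)\,J(\chi_3,\chi_3)$: substituting this and using $\chi_3^3(4) = 1$ converts $\chi_3^2(4ab)\,J(\chi_3,\chi_2)$ into $\chi_3^2(ab)\,J(\chi_3,\chi_3)$, and analogously for the complex conjugate.

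The last step is to prove this Jacobi-sum identity elementarily, avoiding Hasse--Davenport. Using $v = \tfrac12 + t$ in $J(\chi_3,\chi_3) = \sum_v \chi_3\bigl(v(1-v)\bigr)$ gives $v(1-v) = \tfrac14\bigl(1-(2t)^2\bigr)$, so after reindexing $s = 2t$ we have $J(\chi_3,\chi_3) = \chi_3^2(4) \sum_s \chi_3(1-s^2)$. The sum $\sum_s \chi_3(1-s^2)$ collapses to $J(\chi_2,\chi_3) = J(\chi_3,\chi_2)$ via the $2$-to-$1$ map $s \mapsto s^2$ onto nonzero squares (combined with $\sum_{u\in\Fqn} \chi_3(1-u) = 0$), which yields the identity. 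The main bookkeeping challenge will be keeping straight the conventions $\chi_3(0) = 0$ and $\chi_3(-1) = 1$, along with the various cube-character evaluations at $4$, $a$, $b$, and $ab$, so that every cancellation producing $N_n(\Ca) = N_n(E) = q^n + 1 - \omega^n - \overline{\omega}^n$ emerges transparently.
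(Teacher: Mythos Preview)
Your proof is correct, but it follows a genuinely different route from the paper's. The paper never introduces Jacobi sums at all: instead it writes
\[
N_n(\Ca)=1+\chi_3(a)+\chi_3^2(a)+\sum_{x\in\Fqn}\bigl[1+\chi_3(ax+b)+\chi_3^2(ax+b)\bigr]\bigl[1+\chi_3(x)+\chi_3^2(x)\bigr],
\]
expands, and groups the nine terms into $S_1=\sum_x\bigl[1+\chi_3(ax^2+bx)+\chi_3^2(ax^2+bx)\bigr]$ and $S_2=\sum_{x\ne 0}\bigl[\chi_3(a+bx^{-1})+\chi_3^2(a+bx^{-1})\bigr]$ (the remaining four terms vanish by orthogonality). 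After completing the square $ax^2+bx=a\bigl(x+\tfrac{b}{2a}\bigr)^2-\tfrac{b^2}{4a}$, the sum $S_1$ is \emph{literally} the affine point count of $w^3=az^2-\tfrac{b^2}{4a}$, i.e.\ of the elliptic curve $z^2=a^{-1}w^3+(b/2a)^2$; and $S_2=-\chi_3(a)-\chi_3^2(a)$ cancels the points at infinity. So the paper obtains $N_n(\Ca)=N_n(E)$ by a direct double-counting trick, with no character identity beyond orthogonality.

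Your argument instead computes $N_n(\Ca)$ and $N_n(E)$ separately as $q^n+1$ plus explicit linear combinations of $J(\chi_3,\chi_3)$ and $J(\chi_3,\chi_2)$ respectively, and then matches them via the duplication identity $J(\chi_3,\chi_2)=\chi_3(4)\,J(\chi_3,\chi_3)$, which you prove cleanly by completing the square inside $J(\chi_3,\chi_3)$. This is longer but more informative: it yields the closed form $N_n(\Ca)=q^n+1+\chi_3^2(ab)J(\chi_3,\chi_3)+\chi_3(ab)\overline{J(\chi_3,\chi_3)}$, making transparent that the count depends on $a,b$ only through $\chi_3(ab)$, and it situates the result within the classical Gauss--Jacobi framework for diagonal curves. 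The paper's method, by contrast, is shorter and closer in spirit to the rest of the article, where repeated changes of variable convert one character sum directly into another.
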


	\begin{proof}We have
		$$\begin{aligned}N_n(\Ca)&=1+\chi_3(a)+\chi_3^2(a)+\sum\limits_{x\in\Fqn}\left[1+\chi_3(ax+b)+\chi_3^2(ax+b)\right]\left[1+\chi_3(x)+\chi_3^2(x)\right]\\
		&=1+\chi_3(a)+\chi_3^2(a)+S_1+S_2,
		\end{aligned}$$
		where
		$$\begin{aligned}S_1:&=\sum\limits_{x\in\Fqn}\left[1+\chi_3(ax^2+bx)+\chi_3^2(ax^2+bx)\right]\\
		&=\sum\limits_{z\in\Fqn}\left[1+\chi_3\left(a\left(z-\tfrac{b}{2a}\right)^2+b\left(z-\tfrac{b}{2a}\right)\right)+\chi_3^2\left(a\left(z-\tfrac{b}{2a}\right)^2+b\left(z-\tfrac{b}{2a}\right)\right)\right]\\
		&=\sum\limits_{z\in\Fqn}\left[1+\chi_3\left(az^2-\tfrac{b^2}{4a}\right)+\chi_3^2\left(az^2-\tfrac{b^2}{4a}\right)\right]\\
		\end{aligned}$$
		and
		$$\begin{aligned}S_1:&=\sum\limits_{x\in\Fqn^*}\left[\chi_3(ax+b)\chi_3^2(x)+\chi_3^2(ax+b)\chi_3^4(x)\right]\\
		&=\sum\limits_{x\in\Fqn^*}\left[\chi_3(a+b\tfrac{1}{x})+\chi_3^2(a+b\tfrac{1}{x})\right]\\
		&=\sum\limits_{z\in\Fqn^*}\left[\chi_3(a+bz)+\chi_3^2(a+bz)\right]\\
		&=-\chi_3(a)-\chi_3^2(a).
		\end{aligned}$$
		Since $S_1$ count the number of rational points in the curve with equation $z^2=\tfrac{y^3}{a}+\tfrac{b^2}{4a^2}$, the result follows from Lemma \ref{item10}.$\hfill\qed$\\
	\end{proof}

	\begin{comment}
	\begin{theorem}
	Let $a,b\in\Fq^*$. The number of rational points on the curve $\Ca: y^3=ax^6+b$ over $\Fqn$ satisfies
	$$N_n(\Ca)=q^n+1-\omega_1^n-\overline{\omega_1}^n-\omega_2^n-\overline{\omega_2}^n-\omega_3^n-\overline{\omega_3}^n-\omega_4^n-\overline{\omega_4}^n-\chi_3(a)-\chi_3^2(a),$$
	where $\omega_1:=\omega_q(a^{-1},0,0,-ba^{-1}),\ \omega_2:=\omega_q(b^{-1},0,0,-ab^{-1}),\ \omega_3:=\omega_q(1,0,0,-4ab)$ and $\omega_4:=\omega_q(-4ab,0,0,1)$.
	\end{theorem}
	\end{comment}

	\begin{proof} (proof of Theorem \ref{item23})
		We note that
		$$\begin{aligned}
		N_n(\Ca)&=1+\sum\limits_{x\in\Fqn}\big[1+\chi_3(ax^2+b)+\chi_3^2(ax^2+b)\big]\big[1+\chi_3(x)+\chi_3^2(x)\big]\\
		&=1+S_1+S_2+S_3,
		\end{aligned}$$
		where 
		$$\begin{aligned}S_1&:=\sum\limits_{x\in\Fqn}\big[1+\chi_3(ax^2+b)+\chi_3^2(ax^2+b)+\chi_3(x)+\chi_3^2(x)\big]\\
		&=\sum\limits_{x\in\Fqn}\big[1+\chi_3(ax^2+b)+\chi_3^2(ax^2+b)\big]\\
		&=\sum\limits_{w\in\Fqn}\big[1+\chi_2(a^{-1}w^3-ba^{-1})\big],\\
		\end{aligned}$$
		$$\begin{aligned}S_2&:=\sum\limits_{x\in\Fqn}\big[\chi_3(ax^2+b)\chi_3(x)+\chi_3^2(ax^2+b)\chi_3^2(x)\big]\\
		&=\sum\limits_{x\in\Fqn^*}\left[\chi_3\left(\frac{ax^2+b}{x^2}\right)+\chi_3^2\left(\frac{ax^2+b}{x^2}\right)\right]\\
		&=\sum\limits_{z\in\Fqn^*}\left[\chi_3(a+bz^2)+\chi_3^2(a+bz^2)\right]\\
		&=\sum\limits_{w\in\Fqn}\chi_2(b^{-1}w^3-ab^{-1})-\chi_3(a)-\chi_3^2(a)\\
		\end{aligned}$$
		and 
		$$\begin{aligned}S_3&:=\sum\limits_{x\in\Fqn}\big[\chi_3(ax^2+b)\chi_3^2(x)+\chi_3^2(ax^2+b)\chi_3(x)\big]\\
		&=\sum\limits_{x\in\Fqn^*}\left[\chi_3\left(\frac{ax^2+b}{x}\right)+\chi_3^2\left(\frac{ax^2+b}{x}\right)\right].\\
		\end{aligned}$$
		By Lemma \ref{item10},
		$$S_1=q^n-\omega_1^n-\overline{\omega_1}^n,$$
		 where $\omega_1:=\omega_q(a^{-1},0,0,-ba^{-1})$ and
		$$S_2=-\omega_2^n-\overline{\omega_2}^n-\chi_3(a)-\chi_3^2(a),$$
		where $\omega_2:=\omega_q(b^{-1},0,0,-ab^{-1})$.	By Remark \ref{item37} and Theorem \ref{item17},
		$$S_3=-\omega_3^n-\overline{\omega_3}^n-\omega_4^n-\overline{\omega_4}^n,$$
		where $\omega_3:=\omega_q(1,0,0,-4ab)$ and $\omega_4:=\omega_q(-4ab,0,0,1)$. $\hfill\qed$
		
	\end{proof}

	\begin{example}
		Let $\Ja: y^3=x^6+1$ be a curve over $\F_{103}$. We use Lemma \ref{item12} as in Example \ref{item39} to calculate the complex numbers $\omega_i$ in Theorem \ref{item23}. We have
		$$\omega_{103}(1,0,0,-1)=-10+i\sqrt{3},\ \omega_{103}(1,0,0,-4)=\tfrac{7+i\sqrt{363}}{2}\text{ and } \omega_{103}(-4,0,0,1)=\tfrac{-13+i\sqrt{243}}{2}.$$
		Then, by Theorem \ref{item23},
		$$N_n(\Ja)=103^n-1-2\cdot\omega_1^n-2\cdot\overline{\omega_1}^n-\omega_2^n-\overline{\omega_2}^n-\omega_3^n-\overline{\omega_3}^n,$$
		where $\omega_1:=-10+i\sqrt{3},\ \omega_2:=\tfrac{7+i\sqrt{363}}{2}$ and $\omega_3:=\tfrac{-13+i\sqrt{243}}{2}$.
	\end{example}

\section{Rational Points on Curves of the Form $y^4=f(x)$}\label{sec6}

In this section, we calculate the number of rational points on curves of the form $y^4=ax^4+bx^2+c$. The case $q\equiv 3\pmod{4}$ must be considered separately, since every square is a fourth power, as we show in the following lemma.

\begin{lemma}\label{item44}
	Let $\Fq$ be a finite field with $q\equiv 3\pmod{4}$ elements and $k$ a positive integer. An element $\alpha \in\Fq$ is a square if and only if it is a $2^k$ power.
\end{lemma}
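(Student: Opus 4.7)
The plan is to reduce the claim to the standard characterization of $d$-th powers in a cyclic group and use the $2$-adic structure of $q-1$.

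First I would dispose of $\alpha = 0$, which is trivially both a square and a $2^k$-th power, and then work inside the multiplicative group $\Fq^*$, which is cyclic of order $q-1$. Because $q \equiv 3 \pmod 4$, we can write $q - 1 = 2m$ with $m$ odd, so the $2$-adic valuation of $q-1$ is exactly one. Consequently $\gcd(2^k, q-1) = 2$ for every $k \geq 1$, which matches $\gcd(2, q-1) = 2$.

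Next I would invoke the standard criterion: in a cyclic group of order $N$, an element $\alpha$ is a $d$-th power if and only if $\alpha^{N/\gcd(d,N)} = 1$. Applying this with $N = q-1$ and $d \in \{2, 2^k\}$, and using $\gcd(2, q-1) = \gcd(2^k, q-1) = 2$, both conditions collapse to $\alpha^{(q-1)/2} = 1$. Therefore the subgroup of squares and the subgroup of $2^k$-th powers coincide, which is exactly the claim.

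I do not foresee a serious obstacle; the only subtlety is ensuring the equality $\gcd(2^k, q-1) = 2$, which is where the hypothesis $q \equiv 3 \pmod 4$ is used (it fails when $4 \mid q-1$, making the lemma genuinely specific to this congruence class). An equally short alternative, if one prefers a constructive argument, is: given $\alpha = \beta^2$, solve $2^{k-1} t \equiv 1 \pmod m$ (possible since $\gcd(2^{k-1}, m) = 1$) and take $\gamma = \beta^t$; then $\gamma^{2^k} = \beta^{2^k t} = \beta^2 = \alpha$ in $\Fq^*$.
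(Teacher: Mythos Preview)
Your proposal is correct. Your primary argument---identifying the set of $d$-th powers in the cyclic group $\Fq^*$ via the criterion $\alpha^{(q-1)/\gcd(d,q-1)}=1$ and observing that $\gcd(2,q-1)=\gcd(2^k,q-1)=2$ when $q\equiv 3\pmod 4$---is a clean non-constructive route. The paper instead takes the constructive path: it uses B\'ezout to write $2^{k-1}a+\tfrac{q-1}{2}b=1$ and then exhibits an explicit $2^k$-th root of any square (and conversely), which is exactly the ``alternative'' you sketch at the end with $t$ chosen so that $2^{k-1}t\equiv 1\pmod m$. So your alternative and the paper's proof are essentially identical, while your main argument trades the explicit root for a one-line appeal to the structure of cyclic groups; both hinge on the same arithmetic fact that $\tfrac{q-1}{2}$ is odd.
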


\begin{proof} Since $\gcd(2^{k-1},\tfrac{q-1}{2})=1$, there are integers $a,b$ such that $2^{k-1}\cdot a+\tfrac{q-1}{2}\cdot b=1$. Then
	$$\alpha^{2}=\alpha^{2^k\cdot a+(q-1)\cdot b}=(\alpha^a)^{2^k}$$
	for all $\alpha\in\Fq$. Conversely,
	$$\alpha^{2^k}=\alpha^{2^{2k-1}a+2^{k-1}(q-1)b}=\left(\alpha^{2^{2k-2}a}\right)^2.$$ $\hfill\qed$
\end{proof}

Therefore, in the case $q\equiv 3\pmod{4}$, the number of rational points on $\mathcal{L}:y^4=ax^4+bx^2+c$ is the same as the number of rational points on $y^2=ax^4+bx^2+c$ unless points at the infinity. We have already presented the number of points on $y^2=ax^4+bx^2+c$ in Theorem \ref{item30}. In order to calculate $N_n(\mathcal{L})$ for any positive integer $n$ in the case where the $a,b,c$ are elements in a prime field, we have to calculate the number of points in these curves in the case $q\equiv 1\pmod{4}$.

\begin{lemma}\label{item20}Let $a,b,c\in\Fq$, with $a\neq 0$ and $q\equiv 1\pmod{4}$. For $f(x):=ax^2+bx+c$, we have
	$$\sum\limits_{x\in\Fqn}\big[\chi_4(f(x))+\chi_4^3(f(x))\big]=\begin{cases}
	-\omega_q(a^{-1},0,d,0)^n-\overline{\omega_q(a^{-1},0,d,0)}^{\, n}\!\!\!,&\text{ if }b^2-4ac\neq 0;\\
	0,&\text{ if }b^2-4ac= 0,\\
	\end{cases}$$
	where $d:=\tfrac{b^2-4ac}{4a^2}$.
\end{lemma}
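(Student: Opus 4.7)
The plan is to complete the square. The map $x \mapsto y := x + b/(2a)$ is a bijection of $\Fqn$, and a direct computation gives $f(x) = a(y^2 - d)$, so the target sum becomes
$$S := \sum_{y \in \Fqn} \bigl[\chi_4(a(y^2-d)) + \chi_4^3(a(y^2-d))\bigr].$$
The degenerate case $d = 0$ is immediate: since $\chi_4^2 = \chi_2$, one has $\chi_4(ay^2) + \chi_4^3(ay^2) = (\chi_4(a) + \chi_4^3(a))\,\chi_2(y)$ for $y \neq 0$ and $0$ at $y=0$, so Lemma \ref{item11} applied to the nontrivial character $\chi_2$ forces $S=0$.

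For $d \neq 0$, I would use the uniform identity
$$\chi_4(u) + \chi_4^3(u) = N_4(u) - N_2(u), \qquad u \in \Fqn,$$
where $N_k(u) := \#\{z \in \Fqn : z^k = u\}$. This is checked by the case split (nonzero fourth power, nonzero square but not a fourth power, non-square, and $u=0$ using the paper's convention $\chi_i(0) = 0$ for $i\ge 2$). Summing over $y$ with $u = a(y^2-d)$ then yields $S = \#C_4 - \#C_2$, where $C_4$ and $C_2$ are the affine curves over $\Fqn$ given by $w^4 = a(y^2-d)$ and $u^2 = a(y^2-d)$ respectively.

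I would compute each count by summing vertically. For $C_2$, $\#C_2 = \sum_y [1 + \chi_2(ay^2 - ad)] = q^n - \chi_2(a)$ by Lemma \ref{item25}, whose hypothesis is met since the discriminant $4a^2 d$ is nonzero. For $C_4$, rewrite the defining equation as $y^2 = a^{-1}w^4 + d$ and count over $w$: $\#C_4 = q^n + \sum_w \chi_2(a^{-1}w^4 + d)$. Applying the substitution $t = w^2$ via $\sum_w g(w^2) = \sum_t (1+\chi_2(t))\,g(t)$, the character sum splits as
$$\sum_t \chi_2(a^{-1}t^2 + d) + \sum_t \chi_2(a^{-1}t^3 + dt).$$
The first summand equals $-\chi_2(a)$ by Lemma \ref{item25} (discriminant $-4d/a \neq 0$), and the second equals $-\omega^n - \overline{\omega}^{\,n}$ by Lemma \ref{item10}, with $\omega := \omega_q(a^{-1},0,d,0)$, since the cubic discriminant $-4a^{-1}d^3$ is nonzero. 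Combining, $\#C_4 = q^n - \chi_2(a) - \omega^n - \overline{\omega}^{\,n}$, and therefore $S = \#C_4 - \#C_2 = -\omega^n - \overline{\omega}^{\,n}$, as claimed.

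The only genuinely nontrivial step is installing the identity $\chi_4(u) + \chi_4^3(u) = N_4(u) - N_2(u)$ uniformly (including $u=0$); once this is in place, the argument proceeds mechanically through completion of the square, the quadratic-to-cubic substitution $t = w^2$, and two invocations of Lemma \ref{item25} together with one invocation of Lemma \ref{item10}. No further case analysis is needed beyond the split $d=0$ versus $d\ne 0$ already dictated by the statement.
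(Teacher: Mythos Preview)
Your proof is correct and follows essentially the same route as the paper's. Both arguments complete the square to reduce to $a(y^2-d)$ and then relate the target sum to the point count on the quartic curve $z^2=a^{-1}w^4+d$; the paper packages that count as an invocation of Theorem~\ref{item30} and then subtracts the $\chi_4^2$ contribution via Lemma~\ref{item25}, whereas you phrase the same subtraction as the identity $\chi_4+\chi_4^3=N_4-N_2$ and inline the relevant case of Theorem~\ref{item30} (the $t=w^2$ substitution followed by Lemmas~\ref{item25} and~\ref{item10}). The content is identical.
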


\begin{proof}We observe that 
	$$\begin{aligned}&1+\sum\limits_{x\in\Fqn} \big[1+\chi_4(ax^2+bx+c)+\chi_4^2(ax^2+bx+c)+\chi_4^3(ax^2+bx+c)\big]\\
	&=1+\sum\limits_{(z-b/2a)\in\Fqn} \big[1+\chi_4\big(az^2-\tfrac{b^2-4ac}{4a}\big)+\chi_4^2\big(az^2-\tfrac{b^2-4ac}{4a}\big)+\chi_4^3\big(az^2-\tfrac{b^2-4ac}{4a}\big)\big]
	\end{aligned}$$
	calculate the number of rational points on the curve $\Ca: z^2=\tfrac{y^4}{a}+\tfrac{b^2-4ac}{4a^2}$. \begin{comment}Another way to count this number is to compute the summation
	$$\sum\limits_{y\in\Fqn}\big[1+\chi_2(a^{-1}y^4+4^{-1}a^{-2}(b^2-4ac)) \big].$$\end{comment}
	Therefore, letting $L:= \sum\limits_{x\in\Fqn}\big[\chi_4(f(x))+\chi_4^3(f(x))\big]$, by Theorem \ref{item30}, we have
	$$L=-\!\sum\limits_{x\in\Fqn} \chi_4^2(ax^2+bx+c)+\begin{cases}
	-\omega_q(a^{-1},0,d,0)^n-\overline{\omega_q(a^{-1},0,d,0)}^{\, n}-\chi_2(a),&\!\!\!\text{ if }b^2-4ac\neq 0;\\
	(q^n-1)\chi_2(a),&\!\!\!\text{ if }b^2-4ac= 0,\\
	\end{cases}$$
	where $d:=\tfrac{b^2-4ac}{4a^2}$. The result follows by Lemma \ref{item25}.$\hfill\qed$\\
\end{proof}

\begin{comment}
\begin{theorem}
	For $a,b,c\in\Fq$, with $a\neq 0$, let $\Ca: y^4=ax^4+bx^2+c$ be a curve over $\Fq$, with $q\equiv 1\pmod{4}$. The number of rational points on $\Ca$ over $\Fqn$ satisfies
	$$N_n(\Ca)\!=\!\begin{cases}
	q^n+1-\omega_1^n-\overline{\omega_1}^{\, n}-\omega_2^n-\overline{\omega_2}^{\, n}-\omega_3^n-\overline{\omega_3}^{\, n},&\text{if }b^2-4ac\neq 0\text{ and }c\neq0;\\
	q^n+1-\omega_1^n-\overline{\omega_1}^{\, n}-\chi_2(b),&\text{if }b^2-4ac\neq 0\text{ and }c=0;\\
	q^n+1-\chi_2(-b/2)+q^n\cdot\chi_2(a),&\text{if }b^2-4ac=0\text{ and }c\neq0;\\
	q^n+1-\chi_2(b)+q^n\cdot\chi_2(a),&\text{if }b^2-4ac=0\text{ and }c=0,\\
	\end{cases}$$
	where $\omega_1:=\omega_q(a^{-1},0,d_1,0)$, $d_1:=\tfrac{b^2-4ac}{4a^2}$, $\omega_2:=\omega_q(c^{-1},0,d_2,0)$, $d_2:=\tfrac{b^2-4ac}{4c^2}$ and $ \omega_3:=\omega_q(a,b,c,0)$.
\end{theorem}
\end{comment}

\begin{proof} (proof of Theorem \ref{item19})
	We have
	$$\begin{aligned}N_n(\Ca)\!&=\!\delta\!+\!\!\sum\limits_{x\in\Fqn}\!\big[1\!+\!\chi_4(ax^2\!+bx\!+c)\!+\!\chi_4^2(ax^2\!+bx\!+c)\!+\!\chi_4^3(ax^2\!+bx\!+c)\big]\big[1\!+\!\chi_4^2(x)\big]\\
	&= \delta+S_1+S_2,\\
	\end{aligned}$$
	where $\delta:=1+\chi_4(a)+\chi_4^2(a)+\chi_4^3(a)$.
	$$S_1:=\sum\limits_{x\in\Fqn} 1+\chi_4(ax^2+bx+c)+\chi_4^2(ax^2+bx+c)+\chi_4^3(ax^2+bx+c)$$
	and  
	$$S_2:=\sum\limits_{x\in\Fqn^*} \big[1+\chi_4(ax^2+bx+c)+\chi_4^2(ax^2+bx+c)+\chi_4^3(ax^2+bx+c)\big]\chi_4^2(x).$$
	By Remark \ref{item32},
	$$\begin{aligned}S_2
	& =\sum\limits_{x\in\Fqn^*}\!\left[\chi_4^2(x)\!+\chi_4\!\left(\!\frac{ax^2\!+bx\!+c}{x^2}\right)\!+\chi_4^2(ax^3\!+bx^2\!+cx)\!+\chi_4^3\!\left(\!\frac{ax^2+bx+c}{x^2}\right)\right]\\
	& =\sum\limits_{x\in\Fqn^*}\chi_4^2(ax^3+bx^2+cx)+\sum\limits_{z\in\Fq^*}\big[ \chi_4(a+bz+cz^2)+\chi_4^3(a+bz+cz^2)\big].\\
	\end{aligned}$$
	Using Lemma \ref{item25} and Lemma \ref{item20} in $S_1$ and  Lemma \ref{item11} (in the case where $c=0$), Lemma \ref{item10} and Lemma \ref{item20} in $S_2$, the result follows.$\hfill\qed$
\end{proof}

\begin{example}
	Let $\Ja: y^4=x^4+4x^2-1$ be a curve over $\F_{41}$. We use Lemma \ref{item12} as in Example \ref{item39} to calculate the complex numbers $\omega_i$ in Theorem \ref{item19}. We have
	$$\omega_{41}(1,0,5,0)=\omega_{41}(-1,0,5,0)=\omega_{41}(1,4,-1,0)=-5+4 i.$$
	Then, by Theorem \ref{item19}, the number of rational points on $\Ja$ over $\F_{41^n}$ is given by
	$$N_n(\Ja)=41^n+1-3\cdot(-5+4 i)^n-3\cdot(-5-4 i)^n.$$
\end{example}

In the following result, we use that the number of rational points on $\mathcal{L}:y^4=ax^4+bx^2+c$ is essentially the number of points on $\mathcal{L}':y^2=ax^4+bx^2+c$, as we have seen in Lemma \ref{item44}.

\begin{corollary}\label{item43}
	For $a,b,c\in\Fp$, where $p\equiv 3\pmod{4}$ is a prime number and $a\neq0$. The number of rational points on the curve $\Ca: y^4=ax^4+bx^2+c$ over $\Fpn$ satisfies
	$$N_n(\Ca)\!=\!\begin{cases}
	p^n+1-2(i\sqrt{p})^n-2(-i\sqrt{p})^n-\omega^n-\overline{\omega}^{\, n},&\text{if }b^2-4ac\neq 0\text{ and }c\neq0;\\
	p^n+1-(i\sqrt{p})^n-(-i\sqrt{p})^n-\chi_2(b),&\text{if }b^2-4ac\neq 0\text{ and }c=0;\\
	p^n+1-\chi_2(-b/2)+p^n\cdot\chi_2(a),&\text{if }b^2-4ac=0\text{ and }c\neq0;\\
	p^n+1-\chi_2(b)+p^n\cdot\chi_2(a),&\text{if }b^2-4ac=0\text{ and }c=0,\\
	\end{cases}$$
	where $\omega:=\omega_p(a,b,c,0)$.
\end{corollary}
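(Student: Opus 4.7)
The plan is to split into cases according to the parity of $n$. The common key ingredient is a direct consequence of Lemma \ref{item12}: when $p \equiv 3 \pmod 4$, any elliptic curve $y^2 = Ax^3 + Dx$ over $\Fp$ (with $A, D \in \Fp^*$) satisfies $\tau_p(A, 0, D, 0) \equiv 0 \pmod p$, because every term in the sum of Lemma \ref{item12} contains a factor $C^{(p-1)/2 - 2l}$ with $C = 0$, and the exponent vanishes only for $l = (p-1)/4$, which is not an integer when $p \equiv 3 \pmod 4$. Combined with the Hasse bound this forces $\omega_p(A, 0, D, 0) = i\sqrt{p}$. Applied to both $\omega_p(a^{-1}, 0, d_1, 0)$ and $\omega_p(c^{-1}, 0, d_2, 0)$ from Theorem \ref{item19}, this supersingularity is what produces the $(i\sqrt{p})^n$ terms of the Corollary.

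For $n$ even, $p^2 \equiv 1 \pmod 4$, so I would apply Theorem \ref{item19} with base field $\F_{p^2}$ and extension degree $n/2$. By Remark \ref{item45}, each $\omega_{p^2}(\cdot)$ equals $\omega_p(\cdot)^2$. The two supersingular contributions become $2(-p)^{n/2}$ each, which sum to $2(i\sqrt{p})^n + 2(-i\sqrt{p})^n$ using the identity $2(-p)^{n/2} = (i\sqrt{p})^n + (-i\sqrt{p})^n$ valid for even $n$. The third term, $\omega_{p^2}(a,b,c,0) = \omega^2$, contributes $\omega^n + \overline{\omega}^n$. For the degenerate sub-cases, I would also use the elementary fact that every $\alpha \in \Fp$ is a square in $\Fpn$ when $n$ is even (so $\chi_2(\alpha) = 1$), which matches cases 3 and 4 of the Corollary with the corresponding cases of Theorem \ref{item19}.

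For $n$ odd, $p^n \equiv 3 \pmod 4$, and Lemma \ref{item44} guarantees that the maps $y \mapsto y^2$ and $y \mapsto y^4$ on $\Fpn$ have identical fibers; hence the affine $\Fpn$-point counts of $y^4 = ax^4 + bx^2 + c$ and $y^2 = ax^4 + bx^2 + c$ agree. I would extract the affine count from Theorem \ref{item30} by subtracting its single projective point $(0:1:0) \in \mathbb{P}^2$, then add the points at infinity of the $y^4$-model, which are the $\Fpn$-solutions in $\mathbb{P}^1$ of $Y^4 = aX^4$, giving $1 + \chi_2(a)$ points (again by Lemma \ref{item44}, since fourth roots exist precisely when square roots do). The $-\chi_2(a)$ in Theorem \ref{item30} then cancels the $+\chi_2(a)$ at infinity, and $(i\sqrt{p})^n + (-i\sqrt{p})^n = 0$ for odd $n$ is compatible with the Corollary. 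Here one also uses that $\chi_2(\alpha)$ in $\Fpn$ agrees with $\chi_2(\alpha)$ in $\Fp$ for $\alpha \in \Fp$ and $n$ odd.

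The main obstacle is not conceptual but rather bookkeeping: one has to verify the four sub-cases for each parity separately, track how $\chi_2$-values on $\Fp$ transport to $\Fpn$ under the parity of $n$, and carefully recount points at infinity when passing between the $y^2$- and $y^4$-models in $\mathbb{P}^2$. Once the supersingularity observation is in hand and Remark \ref{item45} is invoked to convert $\omega_{p^2}$ back to $\omega_p$, the remaining work is purely algebraic matching against the formulas of Theorems \ref{item19} and \ref{item30}.
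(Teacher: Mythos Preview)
Your proposal is correct and follows essentially the same route as the paper: split by the parity of $n$, invoke Theorem \ref{item19} over $\F_{p^2}$ (via Remark \ref{item45}) for the even case, invoke Theorem \ref{item30} together with Lemma \ref{item44} and a point-at-infinity recount for the odd case, and use Lemma \ref{item12} to force $\omega_p(a^{-1},0,d_1,0)=\omega_p(c^{-1},0,d_2,0)=\pm i\sqrt{p}$. Your treatment of the $\chi_2$-transport and the infinity count is, if anything, slightly more explicit than the paper's.
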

\begin{proof}By Theorem \ref{item19} and Remark \ref{item45},
$$N_{2n}(\Ca)\!=\!\begin{cases}
p^{2n}+1-\omega_1^{2n}-\overline{\omega_1}^{\, 2n}-\omega_2^n-\overline{\omega_2}^{\, 2n}-\omega_3^{2n}-\overline{\omega_3}^{\, 2n},&\text{if }b^2-4ac\neq 0\text{ and }c\neq0;\\
p^{2n}+1-\omega_1^{2n}-\overline{\omega_1}^{\, 2n}-\chi_2(b),&\text{if }b^2-4ac\neq 0\text{ and }c=0;\\
p^{2n}+1-\chi_2(-b/2)+p^{2n}\cdot\chi_2(a),&\text{if }b^2-4ac=0\text{ and }c\neq0;\\
p^{2n}+1-\chi_2(b)+p^{2n}\cdot\chi_2(a),&\text{if }b^2-4ac=0\text{ and }c=0,\\
\end{cases}$$
where $\omega_1:=\omega_p(a^{-1},0,d_1,0)$, $d_1:=\tfrac{b^2-4ac}{4a^2}$, $\omega_2:=\omega_p(c^{-1},0,d_2,0)$, $d_2:=\tfrac{b^2-4ac}{4c^2}$ and $ \omega_3:=\omega_p(a,b,c,0)$. Lemma \ref{item12} states that
$$\omega_1+\overline{\omega_1}\equiv\omega_2+\overline{\omega_2}\equiv 0\pmod{p}.$$
Since $|\omega_1+\overline{\omega_1}|\leq 2\sqrt{p}$ and $|\omega_2+\overline{\omega_2}|\leq2\sqrt{p}$, we must have 
$$\omega_1=\omega_2=0$$
for $p>4$. A straightforward calculation shows that $\omega_3(a^{-1},0,d_1,0)=\omega_3(c^{-1},0,d_2,0)=0$. In addition, since there are $1+\chi_2(a)$ points at the infinity on $\Ca$ (by Lemma \ref{item44}, each element $a\in\Fp$ has exactly $0$ or $2$ fourth roots), by Theorem \ref{item30}, we have
$$N_{2n-1}(\Ca)=\begin{cases}
p^{2n-1}+1-\omega_3^{2n-1}-\overline{\omega_3}^{\, 2n-1},&\text{ if }b^2-4ac\neq 0\text{, with }c\neq0;\\
p^{2n-1}+1-\chi_2(b),&\text{ if }b^2-4ac\neq 0\text{, with }c=0;\\
p^{2n-1}+1-\chi_2(-b/2)+p^{2n-1}\cdot\chi_2(a),&\text{ if }b^2-4ac= 0\text{, with }c\neq0;\\
p^{2n-1}+1+p^{2n-1}\cdot\chi_2(a),&\text{ if }b^2-4ac= 0\text{, with }c=0\\
\end{cases}$$
for all positive integer $n$. The result follows by gathering the expressions for $N_{2n}$ and $N_{2n-1}$.$\hfill\qed$
\end{proof}

\section{Maximal and Minimal Curves}\label{sec7}

Let $\F_{q^2}$ be a fields with $q^2$ elements. Let $\Ca$ be a projective, geometrically irreducible and non-singular algebraic curve defined over $\F_{q^2}$. The curve $\Ca$ is called maximal over $\F_{q^2}$ if it attains the Hasse-Weil upper bound, that is,
$$N_1(\Ca)=q^2+1+2gq,$$
where $g$ is the genus of $\Ca$. Similarly, a curve is called minimal over $\F_{q^2}$ if it attains the Hasse-Weil lower bound. Many researchers have studied maximal and minimal curves, e.g. see \cite{fuhrmann1997maximal,giulietti2009new,kazemifard2013note,tafazolian2017note}. In this section, we use the theorems presented in the previous sections in order to produce families of maximal and minimal curves. We will consider the curves with equation 
$$ax^n+by^n+cz^n=0$$
in the cases $n=3$ and $n=4$, where $a,b,c$ are elements in a prime field. The special case where $a=b=c=1$, that is, the projective curves with equation
$$x^n+y^n+z^n=0,$$
are well-known as Fermat curves. A general criteria to decide when a Fermat curve $\Ca:x^n+y^n+z^n=0$ is maximal or minimal was presented by Garcia and Tafazolian  \cite{garcia2008cartier}. A shorter proof for the same result was presented by Tafazolian \cite{tafazolian2010characterization}.

\begin{lemma}[{\cite{Lucas}, Lucas's Theorem}]\label{item21} For non-negative integers $m$ and $n$ and a prime $p$, the following holds:
	$$ \binom{n}{m} \equiv \binom{n_0}{m_0}\binom{n_1}{m_1}\dots\binom{n_k}{m_k}\ \pmod{p},$$
	where $\binom{n_i}{m_i}:=0$ if $m_i>n_i$ and $m=m_kp^k+\dots+m_1 p+m_0$, $n=n_kp^k+\dots+n_1 p+n_0$ are the base $p$ expansions of $m$ and $n$. 
\end{lemma}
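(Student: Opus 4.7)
The plan is to prove Lucas's Theorem by comparing the coefficients of two expressions for $(1+x)^n$ in the polynomial ring $\F_p[x]$. The essential input is the Frobenius-style identity $(1+x)^p \equiv 1+x^p \pmod{p}$, which follows from the binomial theorem together with the fact that $p \mid \binom{p}{j}$ for $1 \le j \le p-1$. A straightforward induction on $i$ then upgrades this to $(1+x)^{p^i} \equiv 1 + x^{p^i} \pmod{p}$ for every non-negative integer $i$.

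Writing $n = n_0 + n_1 p + \cdots + n_k p^k$ in base $p$, I would then compute
$$(1+x)^n \;=\; \prod_{i=0}^{k} (1+x)^{n_i p^i} \;\equiv\; \prod_{i=0}^{k} \bigl(1+x^{p^i}\bigr)^{n_i} \pmod{p}.$$
Expanding each factor by the ordinary binomial theorem, the right-hand side becomes
$$\prod_{i=0}^{k}\;\sum_{j_i=0}^{n_i}\binom{n_i}{j_i} x^{j_i p^i}\;=\;\sum_{(j_0,\dots,j_k)} \left(\prod_{i=0}^{k}\binom{n_i}{j_i}\right) x^{\,j_0 + j_1 p + \cdots + j_k p^k},$$
where each $j_i$ ranges over $\{0,1,\dots,n_i\}$ (and we may let it range over $\{0,\dots,p-1\}$ using the convention $\binom{n_i}{j_i}=0$ for $j_i > n_i$).

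The key observation is that since $0 \le j_i \le p-1$, the exponent $j_0 + j_1 p + \cdots + j_k p^k$ is precisely the base-$p$ expansion of some integer, so by uniqueness of base-$p$ representations, the coefficient of $x^m$ in the right-hand side is exactly $\prod_i \binom{n_i}{m_i}$, where $m = m_0 + m_1 p + \cdots + m_k p^k$. Comparing with $(1+x)^n = \sum_m \binom{n}{m} x^m$ and equating coefficients of $x^m$ in $\F_p[x]$ yields the claimed congruence. I do not anticipate a major obstacle here; the only delicate point is being careful with the bookkeeping between the two ranges of summation (the product over $j_i \le n_i$ versus the unique base-$p$ digits $m_i \le p-1$), which is cleanly handled by the convention $\binom{n_i}{m_i}=0$ when $m_i > n_i$ already built into the statement.
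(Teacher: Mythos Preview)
Your proof is correct and is in fact the standard generating-function argument for Lucas's Theorem. Note, however, that the paper does not supply its own proof of this lemma: it is stated with a citation to the literature and used as a black box. So there is no paper-proof to compare against; you have simply supplied a valid proof where the paper chose to cite one.
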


\begin{theorem}\label{item48}
	Let $a,b,c$ be non-null elements in a prime field $\Fp$, where $p\neq3$. Let $\Ca$ be the projective non-singular curve of genus $1$ given by equation $ax^3+by^3+cz^3=0$. Then
	\begin{enumerate}[i)]
		\item $\Ca$ is maximal over $\F_{p^{2n}}$ if and only if $3$ divides $p^n+1$;
		\item $\Ca$ is minimal over $\F_{p^{2n}}$ if and only if $3$ divides $p^n-1$ and $p\equiv 2\pmod{3}$.
	\end{enumerate}
\end{theorem}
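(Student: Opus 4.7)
The plan is to express the maximality/minimality of $\Ca$ in terms of its Frobenius eigenvalue $\omega$, and then determine for which $p$ and $n$ the required relation $\omega^{2n}=\pm p^n$ can hold. The curve $\Ca$ is a smooth plane cubic with $abc\neq 0$, hence irreducible of genus~$1$; by Theorem~\ref{item41} (Hasse--Weil) there exists $\omega\in\C$ with $|\omega|=\sqrt p$ and $N_m(\Ca)=p^m+1-\omega^m-\overline\omega^m$ for every $m\geq 1$. Setting $m=2n$ and using $|\omega^{2n}|=p^n$, maximality of $\Ca$ over $\F_{p^{2n}}$ is equivalent to $\omega^{2n}=-p^n$ and minimality to $\omega^{2n}=p^n$. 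Thus $\Ca$ is maximal or minimal over some $\F_{p^{2n}}$ if and only if $\omega^{2}/p$ is a root of unity.

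The next step is the key equivalence: $\omega^{2}/p$ is a root of unity if and only if $p\equiv 2\pmod 3$. For the ``if'' direction, suppose $p\equiv 2\pmod 3$; then $\gcd(3,p-1)=1$, so $\lambda\mapsto\lambda^{3}$ is a bijection of $\Fp$, and the induced map $(x:y:z)\mapsto(x^{3}:y^{3}:z^{3})$ is a bijection of $\mathbb{P}^{2}(\Fp)$ that carries the line $\{aX+bY+cZ=0\}$ onto $\Ca(\Fp)$. Hence $N_1(\Ca)=p+1$, so $\tau:=\omega+\overline\omega=0$; consequently $\omega=\pm i\sqrt p$ and $\omega^{2}/p=-1$.

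For the ``only if'' direction I first reduce to $\tau=0$. Writing $\omega=\sqrt p\,e^{i\theta}$, and noting that $\Q(\omega)$ has degree at most~$2$ over $\Q$ so any root of unity in $\Q(\omega)$ has order dividing~$12$, an enumeration of the eight candidates $\omega^{2}/p\in\mu_{12}$ shows $\tau=2\sqrt p\cos\theta\in\{0,\pm\sqrt p,\pm 2\sqrt p,\pm\sqrt{2p},\pm\sqrt{3p}\}$; for $p$ an odd prime different from~$3$, the only rational integer in this list is $\tau=0$. Thus it suffices to show $\tau\neq 0$ whenever $p\equiv 1\pmod 3$. Dehomogenize $\Ca$ by $z=1$ to rewrite the affine part as $y^{3}=(-a/b)x^{3}+(-c/b)$, and apply Theorem~\ref{item40} to identify $\omega$ with $\omega_p(-b/a,0,0,(c/(2a))^{2})$, the Frobenius of the auxiliary elliptic curve $E:y^{2}=(-b/a)x^{3}+(c/(2a))^{2}$. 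Because $p\equiv 1\pmod 6$, the factor $0^{3l-(p-1)/2}$ in Lemma~\ref{item12} isolates the single index $l=(p-1)/6$, yielding
\[
\tau\equiv \binom{(p-1)/2}{(p-1)/3}\left(\frac{-bc}{2a^{2}}\right)^{(p-1)/3}\pmod p.
\]
The main obstacle will be verifying this residue is nonzero modulo~$p$; since $(p-1)/2<p$, the factorial $\bigl((p-1)/2\bigr)!$ is coprime to~$p$, so the binomial coefficient is a positive integer coprime to~$p$, while $-bc/(2a^{2})\in\Fp^{*}$ makes the power factor nonzero. Consequently $\tau\not\equiv 0\pmod p$, and the Hasse bound $|\tau|\leq 2\sqrt p<p$ then forces $\tau\neq 0$.

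Putting the pieces together: when $p\equiv 2\pmod 3$, $\omega=\pm i\sqrt p$ gives $\omega^{2n}+\overline\omega^{2n}=2(-1)^{n}p^{n}$, so $\Ca$ is maximal iff $n$ is odd and minimal iff $n$ is even; since $p\equiv -1\pmod 3$, these translate exactly to $3\mid p^{n}+1$ and $3\mid p^{n}-1$, proving (i) and (ii). When $p\equiv 1\pmod 3$, $\Ca$ is never maximal or minimal over any $\F_{p^{2n}}$; in parallel, $3\nmid p^{n}+1$ for every~$n$ (as $p^{n}\equiv 1\pmod 3$), and the condition $p\equiv 2\pmod 3$ in~(ii) fails outright, so both conditions of the theorem correctly agree with the Frobenius computation in all cases.
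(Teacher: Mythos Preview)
Your proof is correct and follows the same overall architecture as the paper's: both treat the case $p\equiv 2\pmod 3$ by observing that cubing is a bijection on $\Fp$ so that $N_1(\Ca)=p+1$ and $\omega=\pm i\sqrt p$, and both handle $p\equiv 1\pmod 3$ by combining Theorem~\ref{item40} with Lemma~\ref{item12} to show the trace is a unit modulo~$p$.

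The one genuine difference is in how you pass from ``trace nonzero'' to ``never maximal/minimal over any $\F_{p^{2n}}$''. The paper applies Lemma~\ref{item12} directly over $\F_{p^n}$ for every $n$ and then uses Lucas's theorem (Lemma~\ref{item21}) to see that $\binom{(p^n-1)/2}{(p^n-1)/3}\equiv\binom{(p-1)/2}{(p-1)/3}^n\not\equiv 0\pmod p$, whence $\tau_{p^{2n}}\not\equiv 0\pmod p$, contradicting $\tau_{p^{2n}}=\pm 2p^n$. You instead work only at level $n=1$: from $\tau_p\not\equiv 0\pmod p$ and $|\tau_p|<p$ you get $\tau_p\neq 0$, and then your root-of-unity argument in the imaginary quadratic field $\Q(\omega)$ (orders dividing~$12$, hence $\tau\in\{0,\pm\sqrt p,\pm\sqrt{2p},\pm\sqrt{3p},\pm 2\sqrt p\}\cap\Z=\{0\}$) rules out $\omega^{2n}=\pm p^n$ for every~$n$. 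Your route trades Lucas's theorem for a small piece of algebraic number theory; it is slightly cleaner but less self-contained within the paper's toolkit.

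One cosmetic slip: the map $(x:y:z)\mapsto(x^3:y^3:z^3)$ carries $\Ca(\Fp)$ onto the line $aX+bY+cZ=0$, not the other way around as you wrote. Since you correctly note the map is a bijection of $\mathbb P^2(\Fp)$, the cardinality conclusion $N_1(\Ca)=p+1$ is unaffected.
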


\begin{proof}
	We note that the $\Ca$ has the same number of rational points as the affine curve $\Ca'$ given by equation $y^3=-\tfrac{a}{b}x^3-\tfrac{c}{b}$. Since $\Ca'$ has genus $1$, the number of rational points on $\Ca'$ is given by
	$$N_n(\Ca')=p^n+1-\omega^n-\overline{\omega}^n,$$
	for a complex number $\omega$, where $|\omega|=\sqrt{p}$. In the case $p\equiv 2\pmod{3}$, since $y\mapsto y^3$ permutes the elements of $\Fp$, we have $N_1(\Ca')=p+1$ and then $\omega=i\sqrt{p}$. Hence, the result is proved in this case. For $p\equiv 1\pmod{3}$, Theorem \ref{item40} and Lemma \ref{item12} show that
	$$N_n(\Ca')\equiv 1-\binom{\tfrac{p^n-1}{2}}{\tfrac{p^n-1}{3}}\left(\frac{-b}{a}\right)^{\frac{p^n-1}{3}}\left(\frac{c^2}{4a^2}\right)^{\frac{p^n-1}{6}}\pmod{p}.$$
	By Lemma \ref{item21}, we must have
	$$\binom{\tfrac{p^n-1}{2}}{\tfrac{p^n-1}{3}}\equiv \binom{\tfrac{p-1}{2}}{\tfrac{p-1}{3}}^n\not\equiv 0\pmod{p}.$$
	Thus $N_n(\Ca')\neq p^n+1$ and then $\omega\neq i\sqrt{p^n}$. Hence, the result is shown.
	$\hfill\qed$
\end{proof}

\begin{comment}
Infelizmente não da certo por causa do caso em que $q\equiv 1\pmod{3}$ simultaneamente com $p\equiv 2\pmod{3}$.

\begin{theorem}\label{item47}
Let $a,b,c\in\Fq^*$, where the characteristic $p$ of $\Fq$ is different from $3$. Let $\Ca$ be the projective non-singular curve of genus $1$ given by equation $ax^3+by^3+cz^3=0$. Then
\begin{enumerate}[i)]
\item $\Ca$ is maximal over $\F_{q^2}$ if and only if $3$ divides $q+1$;
\item $\Ca$ is minimal over $\F_{q^2}$ if and only if $3$ divides $q-1$ and $q\equiv 2\pmod{3}$.
\end{enumerate}
\end{theorem}

\begin{proof}
We note that the $\Ca$ has the same number of rational points as the affine curve $\Ca'$ given by equation $y^3=-\tfrac{a}{b}x^3-\tfrac{c}{b}$. Since $\Ca'$ has genus $1$, the number of rational points on $\Ca'$ is given by
$$N_n(\Ca')=q^n+1-\omega^n-\overline{\omega}^n,$$
form some complex number $\omega$ with $|\omega|=\sqrt{p}$. In the case $q\equiv 2\pmod{3}$, since $y\mapsto y^3$ permutes the elements of $\Fq$, we have $N_1(\Ca')=q+1$ and then $\omega=i\sqrt{q}$. Hence, the result is proved in this case. For $q\equiv 1\pmod{3}$, Theorem \ref{item40} and Lemma \ref{item12} show that
$$N_n(\Ca')\equiv 1-\binom{\tfrac{q-1}{2}}{\tfrac{q-1}{3}}\left(\frac{-b}{a}\right)^{\frac{q-1}{3}}\left(\frac{c^2}{4a^2}\right)^{\frac{q-1}{6}}.$$
sdfsd$\hfill\qed$
\end{proof}

\end{comment}

\begin{theorem}\label{item46}
	Let $a,b,c$ be non-null elements in a prime field $\Fp$. Let $\Ca$ be the projective non-singular curve of genus $3$ given by equation $ax^4+by^4+cz^4=0$. Then
	\begin{enumerate}[i)]
		\item $\Ca$ is maximal over $\F_{p^{2n}}$ if and only if $4$ divides $p^n+1$;
		\item $\Ca$ is minimal over $\F_{p^{2n}}$ if and only if $4$ divides $p^n-1$ and $p\equiv 3\pmod{4}$.
	\end{enumerate}
\end{theorem}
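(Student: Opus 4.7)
The plan is to follow the approach of Theorem \ref{item48}: reduce the projective curve to an affine model covered by Theorem \ref{item19} (or its $p \equiv 3 \pmod{4}$ counterpart, Corollary \ref{item43}), and then split into cases according to $p \bmod 4$. Multiplying the equation by $1/b$ rewrites $\Ca$ as the projective closure of $y^4 = Ax^4 + C$ with $A := -a/b$ and $C := -c/b$ both nonzero; this fits into the framework of Theorem \ref{item19} with vanishing middle coefficient, and since the discriminant $-4AC$ is nonzero with $C \neq 0$, only the first branches of those theorems will be invoked. Because $\Ca$ is a smooth plane quartic of genus $3$, maximality (resp.\ minimality) over $\F_{p^{2n}}$ means $N_{2n}(\Ca) = p^{2n} + 1 \pm 6p^n$.

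For $p \equiv 3 \pmod 4$ I would apply Corollary \ref{item43}. Every summand in Lemma \ref{item12} for $\tau_p(A, 0, C, 0)$ carries a factor $0^{(p-1)/2 - 2l}$; since $(p-1)/2$ is odd when $p \equiv 3 \pmod 4$, the exponent is odd for every integer $l$, so every term vanishes. Thus $\tau_p(A, 0, C, 0) \equiv 0 \pmod p$, and the Hasse bound $|\tau_p| \leq 2\sqrt{p}$ forces $\omega := \omega_p(A, 0, C, 0) = i\sqrt{p}$ (the case $p = 3$ is handled by direct verification). Substituting $\omega = i\sqrt{p}$ into Corollary \ref{item43} at level $\F_{p^{2n}}$ collapses the formula to
\[ N_{2n}(\Ca) = p^{2n} + 1 - 6(-1)^n p^n, \]
maximal when $n$ is odd and minimal when $n$ is even --- exactly matching $4 \mid p^n + 1$ (odd $n$) and $4 \mid p^n - 1$ (even $n$) under the standing hypothesis $p \equiv 3 \pmod 4$.

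For $p \equiv 1 \pmod 4$ I would apply Theorem \ref{item19}, producing three Frobenius pairs $\omega_1, \omega_2, \omega_3$ with $\omega_3 = \omega_p(A, 0, C, 0)$. Lemma \ref{item12} now retains only the endpoint term at $l = (p-1)/4$, giving
\[ \tau_p(A, 0, C, 0) \equiv \binom{(p-1)/2}{(p-1)/4} A^{(p-1)/4} C^{(p-1)/4} \pmod p, \]
which is nonzero mod $p$ because the binomial has all entries strictly below $p$ and $A, C \neq 0$. Hence the elliptic curve $E_3 : y^2 = A x^3 + C x$ is ordinary over $\F_p$. The Frobenius recursion $\tau_{p^{m+1}} = \tau_p \tau_{p^m} - p\, \tau_{p^{m-1}}$ yields $\tau_{p^m}(E_3) \equiv \tau_p(E_3)^m \pmod p$, so $\tau_{p^{2n}}(E_3) \not\equiv 0 \pmod p$ for every $n \geq 1$. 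Maximality (resp.\ minimality) of $\Ca$ over $\F_{p^{2n}}$ would force every $\omega_i^{2n} + \overline{\omega_i}^{2n}$ to saturate its Hasse extreme $\mp 2p^n$ (three numbers of absolute value at most $2p^n$ can sum to $\mp 6p^n$ only if each equals $\mp 2p^n$), so in particular $\tau_{p^{2n}}(E_3) = \mp 2p^n$, divisible by $p$ --- a contradiction. Hence $\Ca$ is neither maximal nor minimal when $p \equiv 1 \pmod 4$, consistent with $4 \nmid p^n + 1$ and $p \not\equiv 3 \pmod 4$ in that regime.

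The main obstacle is the case $p \equiv 1 \pmod 4$, where one must simultaneously control three Frobenius pairs rather than one. The key observation is that the triple saturation required for extremality collapses to a single-factor condition, which is then blocked by exhibiting any ordinary $E_i$ via Lemma \ref{item12}; the choice $\omega_3$ is technically convenient precisely because the absence of a constant term in $y^2 = A x^3 + C x$ kills every $l \neq (p-1)/4$ in the Lemma \ref{item12} expansion, isolating a single, manifestly nonzero residue.
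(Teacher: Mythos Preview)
Your proof is correct and follows essentially the same route as the paper: reduce to the affine model $y^4 = Ax^4 + C$, handle $p \equiv 3 \pmod 4$ via Corollary \ref{item43} together with the supersingularity of $y^2 = Ax^3 + Cx$, and for $p \equiv 1 \pmod 4$ use Theorem \ref{item19} and Lemma \ref{item12} to show that the trace attached to $\omega_3$ is nonzero modulo $p$, which blocks the saturation needed for extremality. The only technical deviation is that the paper applies Lemma \ref{item12} directly over $\F_{p^n}$ and invokes Lucas's theorem (Lemma \ref{item21}) to see that $\binom{(p^n-1)/2}{(p^n-1)/4}\not\equiv 0\pmod p$, whereas you work over $\F_p$ and propagate via the Frobenius-trace recursion $\tau_{p^{m+1}}=\tau_p\tau_{p^m}-p\,\tau_{p^{m-1}}$; the two devices are interchangeable here.
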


\begin{proof}
	Since the projective curve with equation $ax^4+by^4+cz^4=0$ has the same number of rational points as the affine curve given by equation $y^4=-\tfrac{a}{b}x^4-\tfrac{c}{b}$, the result follows from Corollary \ref{item43} and Lemma \ref{item12} for the case $p\equiv 3\pmod{4}$. For the case $p\equiv 1\pmod{4}$, Lemma \ref{item12} states that
	$$\omega_{p^n}(-a/b,0,-c/b,0)\equiv \binom{\tfrac{p^n-1}{2}}{\tfrac{p^n-1}{4}}\left(\frac{-b}{a}\right)^{\frac{p^n-1}{4}}\left(\frac{c^2}{4a^2}\right)^{\frac{p^n-1}{4}}\pmod{p}.$$
	Since 
	$$\binom{\tfrac{p^n-1}{2}}{\tfrac{p^n-1}{4}}\equiv \binom{\tfrac{p-1}{2}}{\tfrac{p-1}{4}}^n\not\equiv 0\pmod{p}$$
	by Lemma \ref{item21}, the result follows from Theorem \ref{item19}.$\hfill\qed$
\end{proof}
 The results proved in \ref{item46} and \ref{item48} generalize the result of Tafazolian \cite{tafazolian2010characterization} for Fermat curves of degree $3$ and $4$.
 \begin{conjecture}
 	The prime number $p$ in Theorems \ref{item46} and \ref{item48} can be replaced by a power of a prime.
 \end{conjecture}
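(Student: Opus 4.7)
The approach is to follow the structure of Theorems~\ref{item48} and~\ref{item46} essentially verbatim, replacing the prime field $\Fp$ by $\Fq$ with $q=p^k$ throughout. The opening reduction is identical: the projective curve $ax^n+by^n+cz^n=0$ has the same affine rational point count as $y^n=-\tfrac{a}{b}x^n-\tfrac{c}{b}$ (after accounting once and for all for the points at infinity), and the point-count formulas of Theorem~\ref{item40}, Corollary~\ref{item43}, and Theorem~\ref{item19} are already stated over arbitrary $\Fq$. The combinatorial case $\gcd(n,q-1)=1$ needs nothing new: the $n$th-power map permutes $\Fq$, so $N_1(\Ca')=q+1$, forcing $\omega_q=\pm i\sqrt q$, and a direct computation of $q\bmod n$ matches up with the divisibility conditions $n\mid q^n\pm 1$ given in the two theorems.

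The substantive case is $n\mid q-1$, where one must rule out $\omega_q=\pm i\sqrt q$. Lemma~\ref{item12} applies verbatim for $q=p^k$, and Lucas's Theorem (Lemma~\ref{item21}) still handles the binomial coefficient that appears in the proofs of Theorems~\ref{item48} and~\ref{item46}. Indeed, the base-$p$ expansions
\[
\frac{p^k-1}{2}=\frac{p-1}{2}\bigl(1+p+\cdots+p^{k-1}\bigr),\qquad \frac{p^k-1}{n}=\frac{p-1}{n}\bigl(1+p+\cdots+p^{k-1}\bigr)
\]
have all digits strictly below $p$ when $n\in\{3,4\}$ and $n\mid p-1$, so
\[
\binom{(p^k-1)/2}{(p^k-1)/n}\equiv\binom{(p-1)/2}{(p-1)/n}^{\!k}\not\equiv 0\pmod p,
\]
the right-hand side being nonzero by the very Lucas calculation already used in the prime proofs. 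Hence the trace of Frobenius satisfies $\tau_q\not\equiv 0\pmod p$, and the monomial in $a,b,c$ that multiplies this binomial is clearly nonzero because $a,b,c\in\Fq^*$.

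The real obstacle, and the reason the statement is labelled a conjecture rather than a theorem, is that the prime proofs close out by upgrading $\tau_q\not\equiv 0\pmod p$ to $\tau_q\neq 0$ via the Hasse bound $|\tau_q|\leq 2\sqrt q<p$; for $q=p^k$ with $k\geq 2$ this inequality fails, since $2\sqrt{p^k}\geq p$. To close the gap one would have to replace the scalar congruence of Lemma~\ref{item12} by a finer arithmetic input. For the diagonal curves $ax^n+by^n+cz^n=0$ with $n\in\{3,4\}$ the Weil formula expresses $\omega_q$ in terms of Jacobi sums $J(\chi_n,\chi_n)$ over $\Fq^*$, twisted by $\chi_n$-characters of monomials in $a,b,c$; applying Stickelberger's theorem to bound the $\mathfrak{p}$-adic valuation of $J(\chi_n,\chi_n)$ at a prime $\mathfrak{p}$ above $p$ in $\Z[\zeta_n]$, together with the norm relation $|J|=\sqrt q$, should pin $\omega_q$ down to a root of unity times $\sqrt q$, which is enough to decide whether $\omega_q\in\{\pm i\sqrt q\}$. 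Pushing this through uniformly in $k$, while tracking the twist character coming from $a,b,c$, is what I expect to be the crux of any full proof.
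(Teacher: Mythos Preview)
The paper does not prove this statement: it is stated as a \emph{Conjecture} immediately after Theorems~\ref{item48} and~\ref{item46}, with no argument offered. So there is no ``paper's own proof'' to compare against, and your write-up is, appropriately, not a proof either but an analysis of where the prime-field argument breaks. Your diagnosis of the main obstruction is correct and matches the reason the author leaves it open: once $q=p^k$ with $k\ge 2$, the Hasse bound $|\tau_q|\le 2\sqrt q$ no longer implies $|\tau_q|<p$, so the congruence $\tau_q\not\equiv 0\pmod p$ from Lemma~\ref{item12} cannot be upgraded to $\tau_q\neq 0$.

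There is one further gap you should flag explicitly. Your Lucas computation assumes $n\mid p-1$, which is what makes $(p^k-1)/n=\tfrac{p-1}{n}(1+p+\cdots+p^{k-1})$ a valid base-$p$ expansion. When $n\mid q-1$ but $n\nmid p-1$ (for instance $n=3$, $p\equiv 2\pmod 3$, $k$ even), this expansion fails; a direct check with $p=5$, $k=2$ gives $\binom{12}{8}\equiv\binom{2}{1}\binom{2}{3}\equiv 0\pmod 5$, so Lemma~\ref{item12} yields $\tau_q\equiv 0\pmod p$ and even the congruence information evaporates. The author's own aborted attempt (visible in a commented-out block just before Theorem~\ref{item46}) singles out exactly this case as the one that ``does not work.'' Your proposed route via Jacobi sums and Stickelberger is the natural replacement---for diagonal curves $ax^n+by^n+cz^n=0$ the Frobenius eigenvalues are literally Jacobi sums twisted by characters of $a,b,c$---but you have only sketched it, and carrying it through uniformly in $k$ (and handling the twist) is real work that neither you nor the paper has done.
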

From the results presented in this section, it emerge the following question.
 \begin{question}
	Are the conditions for the maximality and minimality of the curve $ax^n+by^n+cz^n=0$ the same as the conditions for the Fermat curve $x^n+y^n+z^n=0$ presented in \cite{tafazolian2010characterization} for all positive integer $n$?
\end{question}
In the same way of the proofs of Theorems \ref{item48} and \ref{item46}, it is possible give conditions for which the singular curves presented in this paper are optimal. But, none of the singular curves presented here attains the upper bound 
$$q^n+1+2g\sqrt{q}+\pi-g$$
in Theorem \ref{item41}.

\section{Acknowledgments}

I am grateful to Fabio Enrique Brochero Martínez for his useful comments. This study was financed in part by the Coordenação de Aperfeiçoamento de Pessoal de Nível Superior - Brasil (CAPES) - Finance Code 001.

\begin{comment}

\end{comment}

\bibliographystyle{siam}
\bibliography{biblio}

\begin{thebibliography}{10}

\bibitem{aubry1996weil}
{\sc Y.~Aubry and M.~Perret}, {\em A weil theorem for singular curves},
  (1996).

\bibitem{berndt1998gauss}
{\sc B.~C. Berndt, R.~J. Evans, and K.~S. Williams}, {\em Gauss and Jacobi
  sums}, Wiley New York, 1998.

\bibitem{cohen2005handbook}
{\sc H.~Cohen, G.~Frey, R.~Avanzi, C.~Doche, T.~Lange, K.~Nguyen, and
  F.~Vercauteren}, {\em Handbook of elliptic and hyperelliptic curve
  cryptography}, Chapman and Hall/CRC, 2005.

\bibitem{cossidente2000plane}
{\sc A.~Cossidente, J.~Hirschfeld, G.~Korchm{\'a}ros, and F.~Torres}, {\em On
  plane maximal curves}, Compositio Mathematica, 121 (2000), pp.~163--181.

\bibitem{fuhrmann1997maximal}
{\sc R.~Fuhrmann, A.~Garcia, and F.~Torres}, {\em On maximal curves}, Journal
  of number theory, 67 (1997), pp.~29--51.

\bibitem{garcia2002curves}
{\sc A.~Garcia}, {\em On curves with many rational points over finite fields},
  in Finite Fields with Applications to Coding Theory, Cryptography and Related
  Areas, Springer, 2002, pp.~152--163.

\bibitem{garcia2008cartier}
{\sc A.~Garcia and S.~Tafazolian}, {\em Cartier operators and maximal curves},
  Acta Arith, 135 (2008), pp.~199--218.

\bibitem{giulietti2009new}
{\sc M.~Giulietti and G.~Korchm{\'a}ros}, {\em A new family of maximal curves
  over a finite field}, Mathematische Annalen, 343 (2009), p.~229.

\bibitem{hirschfeld1998number}
{\sc J.~Hirschfeld, G.~Korchm{\'a}ros, et~al.}, {\em On the number of rational
  points on an algebraic curve over a finite field}, Bulletin of the Belgian
  Mathematical Society-Simon Stevin, 5 (1998), pp.~313--340.

\bibitem{hirschfeld2008algebraic}
{\sc J.~W.~P. Hirschfeld, G.~Korchm{\'a}ros, F.~Torres, and F.~E.~T. Orihuela},
  {\em Algebraic curves over a finite field}, Princeton University Press, 2008.

\bibitem{hu2015number}
{\sc S.~Hu, S.~Hong, and W.~Zhao}, {\em The number of rational points of a
  family of hypersurfaces over finite fields}, Journal of Number Theory, 156
  (2015), pp.~135--153.

\bibitem{kazemifard2013note}
{\sc A.~Kazemifard, A.~R. Naghipour, and S.~Tafazolian}, {\em A note on
  superspecial and maximal curves}, Bulletin of the Iranian Mathematical
  Society, 39 (2013), pp.~405--413.

\bibitem{leep1994number}
{\sc D.~B. Leep and C.~C. Yeomans}, {\em The number of points on a singular
  curve over a finite field}, Archiv der Mathematik, 63 (1994), pp.~420--426.

\bibitem{lercier1995counting}
{\sc R.~Lercier and F.~Morain}, {\em Counting the number of points on elliptic
  curves over finite fields: strategies and performances}, in International
  Conference on the Theory and Applications of Cryptographic Techniques,
  Springer, 1995, pp.~79--94.

\bibitem{Lidl}
{\sc R.~Lidl and H.~Niederreiter}, {\em Finite Fields}, vol.~20, Cambridge
  university press, 1997.

\bibitem{Lucas}
{\sc E.~Lucas}, {\em Th{\'e}orie des fonctions num{\'e}riques simplement
  p{\'e}riodiques}, American Journal of Mathematics,  (1878), pp.~289--321.

\bibitem{moreno1993algebraic}
{\sc C.~Moreno}, {\em Algebraic Curves over Finite Fields}, no.~97, Cambridge
  University Press, 1993.

\bibitem{nelsonnumber}
{\sc K.~Nelson, J.~Solymosi, F.~Tom, and C.~Wong}, {\em The number of rational
  points of hyperelliptic curves over subsets of finite fields}, Involve, a
  Journal of Mathematics, 12 (2019), pp.~755--765.

\bibitem{rojas2013number}
{\sc A.~Rojas-Le{\'o}n}, {\em On the number of rational points on curves over
  finite fields with many automorphisms}, Finite Fields and Their Applications,
  19 (2013), pp.~1--15.

\bibitem{schoof1995counting}
{\sc R.~Schoof}, {\em Counting points on elliptic curves over finite fields},
  Journal de th{\'e}orie des nombres de Bordeaux, 7 (1995), pp.~219--254.

\bibitem{Silverman}
{\sc J.~H. Silverman}, {\em The Arithmetic of Elliptic Curves}, vol.~106,
  Springer Science \& Business Media, 2009.

\bibitem{sun2013congruences}
{\sc Z.-H. Sun}, {\em Congruences concerning legendre polynomials ii}, Journal
  of Number Theory, 133 (2013), pp.~1950--1976.

\bibitem{tafazolian2010characterization}
{\sc S.~Tafazolian}, {\em A characterization of maximal and minimal fermat
  curves}, Finite Fields and Their Applications, 16 (2010), pp.~1--3.

\bibitem{tafazolian2017note}
{\sc S.~Tafazolian and F.~Torres}, {\em A note on certain maximal curves},
  Communications in Algebra, 45 (2017), pp.~764--773.

\bibitem{ulas2007rational}
{\sc M.~Ulas}, {\em Rational points on certain hyperelliptic curves over finite
  fields}, Bulletin of the Polish Academy of Sciences Mathematics, 55 (2007),
  pp.~97--104.

\bibitem{williams1979evaluation}
{\sc K.~S. Williams}, {\em Evaluation of character sums connected with elliptic
  curves}, Proceedings of the American Mathematical Society, 73 (1979),
  pp.~291--299.

\end{thebibliography}


\begin{thebibliography}{99}		
	\bibitem{Akb} 
		A. Akbary and W. Qiang, \textit{A Generalized Lucas Sequence and Permutation Binomials}, Proc. Amer. Math. Soc. 134 (2006), pp. 15-22.
		
	\bibitem{ref2} 
		S. Bhattacharya, S. Sarkar, \textit{On Some Permutation Binomials and Trinomials over $\F_{2^n}$}, Designs, Codes and Cryptography, vol. 82, Issue 1-2 (2017), pp. 149-160.	
				
				\bibitem{Bhat} 
		S. Bhattacharya, S. Sarkar, and A. \c{C}e\c{s}melio\u{g}lu \textit{On Some Permutation Binomials of the Form $x^{\frac{2^n-1}{k}+1} + ax$ over $\F_{2^n}$ : Existence and Count}, Arithmetic of Finite Fields, Lecture Notes in Computer Science, vol 7369 (2012), pp. 236-246.
		
		
			\bibitem{Carl} 
		L. Carlitz, \textit{Some Theorems on Permutation Polynomials},  Bull. Amer. Math. Soc.
		vol 68 (2) (1962), pp. 120-122.		
	
		\bibitem{apl1} 
		W. Chu and S. W. Golomb, \textit{Circular Tuscan-$k$ Arrays From Permutation Binomials}, J. Comb.
		Theory A 97 (2002), 195-202.		

		\bibitem{apl4} 
		C. J. Colbourn, T. Klove and A. C. H. Ling, \textit{Permutation Arrays for Powerline Communication and Mutually Orthogonal Latin Squares}, IEEE Trans. Inf. Theory 50 (2004), pp. 1289-1291
		
					\bibitem{apl3} 
		J. F. Dillon and H. Dobbertin, \textit{New Cyclic Difference Sets with Singer Parameters}, Finite
		Fields Appl. 10 (2004), pp. 342-389

	
		\bibitem{Dic} L. E. Dickson,  \textit{The Analytic Representation of Substitutions on a Power of a
			Prime Number of Letters with a Discussion of the Linear Group}, Annals of
		Math, 11 (1896), pp. 65-120.
	
	\bibitem{HWHSW}G. H. Hardy, E. M. Wright, R. Heath-Brown, J. Silverman, A. Wiles, \textit{An introduction to the Theory of Numbers} (6th ed.). Oxford Science Publications, 2008. 
		
\bibitem{Her} 
		C. Hermite, \textit{Sur Les Fonctions de Sept Lettres}, C. R. Acad. Sci. Paris 57 (1863), pp. 750-757.

		
	\bibitem{Kim} 
		S. Y. Kim and J. B. Lee, \textit{Permutation Polynomials of the Type $x^{\frac{q-1}{m}+1} + ax$}, Commun. Korean Math. Soc. 10 (1995), pp. 823-829.	

	%\bibitem{Lapp} 
	%	X. Hou and S. D. Lappano \textit{Determination of a Type of Permutation Binomials over Finite Fields}, Journal of Number Theory, vol. 147 (2015), pp. 14-23	
		
		\bibitem{ref3} 
		K. Li, L. Qu, X.Chen, \textit{New Classes of Permutation Binomials and Permutation Trinomials over Finite Fields}, Finite Fields and Their Applications, vol. 43 (2017), pp. 69-85.
		
		\bibitem{LiNi}R. Lidl and H. Niederreiter, \textit{Finite Fields}, vol. 20, Cambridge University Press,	1997.	
		
			\bibitem{MaZi1} A. Masuda and M. Zieve, \textit{Permutation Binomials over Finite Fields}, Transactions
		of the American Mathematical Society, 361 (2009), pp. 4169-4180.	
		
		\bibitem{Rob} 
		H. Niederreiter and K. H. Robinson, \textit{Complete Mappings of Finite Fields}, J. Australian Math. Soc. Ser. A, vol. 33 (1982), pp. 197-212.
	
            \bibitem{Niv} 
     I.  Niven,  \textit{Irrational Numbers}, Carus Math. Monographs 11, MAA, 1956.
              
\bibitem{PaLe} Y. H. Park and J. B. Lee, \textit{Permutation Polynomials and Group Permutation Polynomials}, Bull.
Austral. Math. Soc. 63 (2001), 67-74.
				
\bibitem{sage}
\emph{{S}ageMath, the {S}age {M}athematics {S}oftware {S}ystem ({V}ersion
  7.5.1)}, The Sage Developers, 2017, \url{https://www.sagemath.org}.
		
		\bibitem{Smal1} 
	C. Small, \textit{Permutation Binomials}, Internat. J. Math. \& Math. Sci. 13 (1990), pp. 337-342.
				
		
		\bibitem{Sil} J. H. Silverman, \textit{The Arithmetic of Elliptic Curves}, vol. 106, Springer Science \&  Business Media, 2009.	
		
			\bibitem{apl2} 
		J. Sun, O. Y. Takeshita and M. P. Fitz, \textit{Permutation Polynomial Based Deterministic Interleavers for Turbo Codes}, Proceedings 2003 IEEE International Symposium on Information
		Theory (Yokohama, Japan, 2003), 319.
		
		\bibitem{Wan10} 
		D. Wan, \textit{Permutation Binomials over Finite Fields}, Acta Math. Sinica (New Series) 10 (1994), pp. 30-35.
			
			\bibitem{DWan} 
		D. Wan, \textit{The Number of Permutation Polynomials of the Form $f(x) + cx$ over a Finite Field}, Proceedings of the	Edinburgh Mathematical Society (Series 2), vol 38 (1993), pp. 133-149.
		
	
		\bibitem{WaLi} D. Wan and R. Lidl, \textit{ Permutation Polynomials of the Form $x^r f (x^{(q-1)/d}  )$ and Their Group Structure}, Monatshefte f\"ur Mathematik, 112 (1991), pp. 149-163.
			
		\bibitem{Wang} 
		L. Wang, \textit{On Permutation Polynomials}, Finite Fields Appl. 8 (2002), pp. 311-322.
			
		\bibitem{ref1} 
		Xing-D. Hou, S. D. Lappano, \textit{Determination of a Type of Permutation Binomials over Finite Fields}, Journal of Number Theory, vol. 147 (2015), pp. 14-23.
		
	\bibitem{Zie} M. Zieve, \textit{On some permutation polynomials over $\F_q$ of the form $x^r h(x^{(q-1)/d})$}.
Proc. Amer. Math. Soc. 137 (2009), 2209-2216. 
		


	\end{thebibliography}
	
\end{document}